\newtheorem{thm}{Theorem}[section]
\newtheorem{lemma}[thm]{Lemma}
\newtheorem{remark}[thm]{Remark}
\renewcommand {\theequation}{\thesection.\arabic{equation}}
\newcommand{\ds}{\displaystyle}
\newcommand{\beq}{\begin{equation}}
\newcommand{\eeq}{\end{equation}}
\numberwithin{equation}{section}
\theoremstyle{definition}
\newtheorem{*theorem}[theorem]{$\bullet$ Theorem}
\newtheorem{*corollary}[theorem]{$\bullet$ Corollary}
\newtheorem*{definition*}{Definition}
\theoremstyle{remark}
\newtheorem{remark*}[rem]{$\star$ Remark}
\newcommand{\abs}[1]{\lvert#1\rvert}
\newcommand{\e}{\epsilon}
\begin{document}
\title{{\bf The $\bf \Gamma$-limit 
of traveling waves in the FitzHugh-Nagumo system}}

\author{Chao-Nien Chen \thanks{Department of Mathematics, National Tsing Hua
University, Hsinchu, Taiwan, ROC ({\tt chen@math.nthu.edu.tw)}.}
\and Y.S. Choi \thanks{Department of Mathematics, University of Connecticut,
                    Storrs, CT 06269-3009  ({\tt choi@math.uconn.edu)}.}
   \and Nicola Fusco \thanks{Dipartimento di Matematica e Applicazioni "R. Caccioppoli", Universita` degli Studi di Napoli "Federico II",
via Cintia, Monte S. Angelo, IT-80126 Napoli, Italy ({\tt n.fusco@unina.it)}.}} 
\date{}
\maketitle
\begin{quote}
\textbf{Abstract}: 
Patterns and waves are  basic and important 
{phenomena} that govern the dynamics of physical and biological systems. A common {theme} in investigating such systems 
 is to {identify} the intrinsic factors 
 responsible for such self-organization. The $\Gamma$-convergence is a well-known {technique applicable to} variational formulation in studying the concentration phenomena of stable patterns. A geometric variational functional  {associated} with the $\Gamma$-limit 
of standing waves of FitzHugh-Nagumo system has recently been {built}.
{This article studies the $\Gamma$-limit of traveling waves. To the best of our knowledge, this is the first attempt to 
expand the scope of applicability of $\Gamma$-convergence to cover non-stationary problems.}


\textbf{Key words}: $\Gamma$-convergence, {FitzHugh-Nagumo}, geometric variational problem, traveling front, traveling pulse. 

\textbf{AMS subject classification}: 35K40, 35B08, 49J45 
\end{quote}

\section{Introduction} 
\setcounter{equation}{0}
\renewcommand{\theequation}{\thesection.\arabic{equation}}

Patterns and waves are  basic and important 
{phenomena} \cite{AFM,CELK,CS,LMN,NTYU,RW1,WW,Y2} that govern the dynamics of physical and biological systems. 
{These can be seen in}, for instance, morphological phases in block copolymers, 
skin pigmentation in cell development and semiconductor gas-discharge systems. 
In the investigation of such systems, a common {theme} 
 is to {identify} the intrinsic factors 
 {responsible for such self-organization}. For the reaction-diffusion systems, self-organized patterns have not only been  
   found in the neighborhoods of Turing's instability \cite{T}, but recent works \cite{CCH,CC1,CC2,CC3,CH,CH1,CKM,CT,DHK,RS,vCNT} 
    exhibited that some patterns and waves 
    possess 
    localized spatial 
     structures. 
In fact localized waves in reaction-diffusion systems 
  are commonly observed, referred to as dissipative solitons \cite{BLSP,F,L,NAY} in physical literature. \\

The FitzHugh-Nagumo model, which was originally derived as an excitable system for studying 
nerve impulse propagation, is now of great interest to the scientific community as breeding grounds for patterns, traveling waves, and other localized structures. It has been extensively studied as a paradigmatic activator-inhibitor system 
for patterns generated 
from homogeneous media destabilized by a spatial modulation. 
These patterns are robust in the sense that they are stable and exist for
a wide range of parameters. \\ 

The $\Gamma$-convergence \cite{Braides2,Le} is a well-known {technique applicable} to variational formulation in studying the concentration phenomena of stable patterns. 
When {a stationary} FitzHugh-Nagumo system
{is equipped} with an appropriate scaling on the parameters, we are led to studying a geometric variational problem \cite{CCR,CCHR} 
 with a $\Gamma$-limit 
   energy functional defined by 
\begin{equation}
  {\cal J}_D(\Omega)={\cal P}_D(\Omega)-\alpha\abs{\Omega}+\frac{\sigma}{2}\int_{\Omega}\mathcal{N}_D(\Omega)dx,
  \label{JD}
\end{equation}
where  
$\Omega$ is a measurable subset of the domain $D \subset {\mathbb R}^N$. 
Denoted by $|\Omega|$ its Lebesgue measure and ${\cal P}_D (\Omega)$ 
the perimeter of $\Omega$ in $D$.
In case $\Omega$ is of class $C^1$, ${\cal P}_D(\Omega)$ is 
the area of the part of the boundary of $\Omega$ that is inside $D$. 
{The integral term represents nonlocal influence with ${\mathcal N}_D(\Omega)$ satisfying the linear equation
\begin{equation} \label{ND_eqn}
- \Delta {\mathcal N}_D (\Omega)+ {\mathcal N}_D(\Omega)= \chi_\Omega\, ; \quad \partial_\nu {\mathcal N}_D(\Omega)=0.
\end{equation}
Here $\partial_\nu$ is the outward normal derivative.} \\

We will build a geometric variational functional associated with traveling wave {investigation} in later sections. To understand this derivation 
as opposed to that of the
stationary problem, we give a {brief} 
review on
the connection between  (\ref{JD}) and the FitzHugh-Nagumo
system. {Consider} 
\begin{eqnarray}
 u_t &=&   \e^2 \Delta u - u\Big  (u-\frac{1}{2} \Big ) (u-1) + \e \alpha
   - \e \sigma v, \label{fn-u} \\
 v_t &=&   \Delta v - v + u.  \label{fn-v}
\end{eqnarray}
With $\alpha>0$ and $\sigma >0$ being fixed, 
 a small $\e$ identifies a range where a singular limit will emerge.
 Recall that $u$ acts as an activator and $v$ is the inhibitor.
 Physically $\alpha$ measures the driving force towards a non-trivial state while $\sigma$ represents
the stablizing inhibition mechanism. Their competition leads to interesting dynamics and the emergence of 
 patterns. In dealing with stationary solutions of (\ref{fn-u})-(\ref{fn-v}) 
 both $u_t$ and $v_t$ {vanish}. Solving (\ref{fn-v}) for $v$ in terms of $u$ and denoted this solution by {$v = {\cal L}_D u$}, we see that 
(\ref{fn-u}) becomes
\begin{equation}
  -\e^2 \Delta u + u \Big ( u -\frac{1}{2} \Big ) (u-1) - \e \alpha
  + \e \sigma {\cal L}_D u =0 \ \ . 
  \label{diff-integro}
\end{equation}
The solutions of (\ref{diff-integro}) 
are the critical points of 
\begin{equation}
  {\cal I}_{D,\e} (u) = \int_D \Big ( \frac{\e^2}{2} |\nabla u|^2
     + \frac{u^2 (u-1)^2}{4}
     - \e \alpha u + \frac{\e \sigma }{2} u {\cal L}_D u \Big ) \, dx. 
  \label{Ie}
\end{equation}
When $D$ is bounded, 
$\e^{-1} {\cal I}_{D,\e}$ $\Gamma$-converges {in $L^1(D)$} to 
\begin{equation}
 \frac{\sqrt{2}}{12} {\cal P}_D(\Omega) - \alpha |\Omega| +
    \frac{\sigma}{2} \int_\Omega{\cal N}_D (\Omega) \, dx, 
  \label{gammalimit}
\end{equation}
a functional 
equivalent to (\ref{JD}). In case {$D = \mathbb{R}^N$,} a ball shaped stationary
set of {${\cal J}_{{\mathbb R}^N}$} is referred as a bubble or an entire solution. \\   

Front and pulse are localized waves, the latter 
 is manifest {as} a small spot. In the past, 
 $\Gamma$-convergence has been employed to establish many interesting results for stable patterns; however to the best of our knowledge, this tool has not been utilized to treat 
  traveling waves. We make attempt toward {this} goal, starting with the investigation of 
   planar traveling wave solutions of the following FitzHugh-Nagumo system: 
\begin{eqnarray} \label{main11}
\left\{  \begin{array}{rl}
\displaystyle  u_t &= \Delta u+\frac{1}{d} (f_\epsilon(u)-\epsilon \sigma v)  \; , \\ \\
\displaystyle v_t &= \Delta v+u-{\gamma} v \;,
\end{array} \right.
\end{eqnarray}
where $f_\epsilon(\xi) =-\xi(\xi-\beta_\epsilon)(\xi-1)$, $\beta_\epsilon= \frac{1}{2} - \frac{\alpha \epsilon}{\sqrt{2}}$ and $\alpha,\gamma,\sigma>0$. 
{As a remark, with $d=\epsilon^2$ and $\gamma=1$, it can be shown as $\epsilon \to 0$,
 its stationary problem leads to the same geometric variational functional (\ref{gammalimit}), 
if we replace $\alpha$ by $\alpha/6\sqrt{2}$ and work on a bound domain.}  \\

A planar traveling wave solution is of the from $(u(x-ct),v(x-ct))$; that is, the wave moves {with a speed} $c$ and keeps the same profile along the moving coordinates. {To treat traveling wave using  $\Gamma$-convergence},
the ansatz $(u(c(x-ct)),v(c(x-ct)))$, proposed in \cite{He}, {is} more appropriate 
and will be {adopted} in the paper.
We are thus led to deal with 
\begin{eqnarray}
dc^2 u_{xx} + dc^2 u_x  +{f_\epsilon(u)- \epsilon \sigma} v& = &0 \;, \label{dc1} \\
c^2 v_{xx} +c^2 v_x  -{\gamma} v +u &=&0  \;, \label{dc2}
\end{eqnarray}
with 
the value of $c$ 
  to be determined. Let us remark that (\ref{JD}) {results from setting $\gamma=1$.} \\ 

Let $L^2_e=\{ w: \int_{-\infty}^\infty e^x w^2 \,dx < \infty \}$.
In studying the $\Gamma$-convergence {in $L^2_e$ topology of the traveling wave functional associated} with \eqref{dc1}-\eqref{dc2}, 
the quantity $\sqrt {dc^2}$ {serves the role of a small} 
 parameter {$\epsilon$, the conventional notation being used}. 
{With 
$ \int_0^{1} f_\epsilon(\xi)  \; d\xi > 0$ for all small $\epsilon$, we seek 
traveling wave solutions with $c>0$.} 
 Given a prescribed velocity $c_0 > 0$, we seek traveling wave with speed 
 $c_0+o(1)$ for small $\epsilon$; 
  to be more precise, $d=\epsilon^2/c^2(\epsilon)$ with $c(\epsilon) \to c_0$ as $\epsilon \to 0$ 
  and $c_0$ indeed is 
  determined by other parameters. 
  {For the existence of traveling waves whose $L^2_e$-limit is a front},
 it is assumed that the following condition holds: \\

\noindent
{{($A 1$)}} $\quad \alpha > \frac{3 \sqrt{2} \sigma}{\gamma} > \alpha-1 > 0$.  \\
  
  Let $h_*=1 - \frac{(\alpha-1) \gamma}{3 \sqrt{2} \sigma}$,  
\begin{equation} \label{cinfty1}
c_f= \frac{2 h_* \sqrt{\gamma}}{\sqrt{1-h_*^2}}  \;.
\end{equation}
{A translation of a traveling wave solution remains a solution. For concreteness
we impose an additional constraint
 $\| u_\epsilon \|_{L^2_e}=1$ when looking for such waves.}

\begin{thm} \label{thm_front}
Assumed that {($A 1$)} is satisfied. 
If $c_f$ satisfies \eqref{cinfty1} for given $\sigma,\gamma$ and $\alpha$, 
there is an $\epsilon_0>0$
such that if $\epsilon \leq \epsilon_0$, {there are  $c_\epsilon$, $d=\epsilon^2/c^2_\epsilon$ for which} 
a traveling wave 
 solution $(c_\epsilon,u_\epsilon,v_\epsilon)$ of \eqref{dc1}-\eqref{dc2} 
  exists {with $\| u_\epsilon \|_{L^2_e}=1$}. 
   Moreover $c_\epsilon \to c_f$ and $u_\epsilon \to \chi_{(-\infty,0]}$ in $L^2_e$ as $\epsilon \to 0$.
\end{thm}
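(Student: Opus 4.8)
The plan is to recast \eqref{dc1}--\eqref{dc2} as a single variational problem in the weighted space $L^2_e$ and to read off both existence and the limit from a $\Gamma$-convergence analysis. First I would eliminate $v$. Since $dc^2=\epsilon^2$, equation \eqref{dc1} reads $\epsilon^2(u_{xx}+u_x)+f_\epsilon(u)-\epsilon\sigma v=0$. Conjugating the operator $\gamma-c^2(\partial_{xx}+\partial_x)$ by the weight via $v=e^{-x/2}w$ turns it into the constant-coefficient operator $-c^2\partial_{xx}+(\tfrac{c^2}{4}+\gamma)$ on $L^2(\R)$, which is self-adjoint and strictly positive; hence \eqref{dc2} defines a bounded positive self-adjoint solution operator $v=\mathcal{L}_c u$ on $L^2_e$, and \eqref{dc1} is exactly the Euler--Lagrange equation in $L^2_e$ of
\begin{equation}
\mathcal{I}_{c,\epsilon}(u)=\int_{-\infty}^{\infty} e^{x}\Big(\tfrac{\epsilon^2}{2}u_x^2 + W_\epsilon(u) + \tfrac{\epsilon\sigma}{2}\,u\,\mathcal{L}_c u\Big)\,dx,
\end{equation}
where $W_\epsilon'=-f_\epsilon$ and the first-order drift is produced automatically by the weight, since $(e^x u_x)_x=e^x(u_{xx}+u_x)$. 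The normalization $\|u\|_{L^2_e}=1$ removes the translation invariance of the equation by selecting one representative, as translating by $a$ scales $\|u\|_{L^2_e}^2$ by $e^{-a}$.

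Next I would show that, for fixed $c$, $\epsilon^{-1}\mathcal{I}_{c,\epsilon}$ $\Gamma$-converges in $L^2_e$ as $\epsilon\to0$ to a weighted geometric functional $\mathcal{E}_c(\Omega)=\tfrac{\sqrt2}{12}\sum_i e^{x_i}-\tfrac{\alpha}{6\sqrt2}\int_\Omega e^x\,dx+\tfrac{\sigma}{2}\int_\Omega e^x\,\mathcal{L}_c\chi_\Omega\,dx$ over sets $\Omega$ of locally finite perimeter with interface points $x_i$. The perimeter coefficient $\int_0^1\sqrt{2W_0}=\tfrac{\sqrt2}{12}$ comes from a Modica--Mortola analysis adapted to the weight, the $O(\epsilon)$ tilt $\beta_\epsilon-\tfrac12=-\tfrac{\alpha\epsilon}{\sqrt2}$ survives as the bulk term, and the nonlocal term passes to the limit by continuity of $\mathcal{L}_c$ on $L^2_e$. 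Minimizing $\mathcal{E}_c$ under $\|u\|_{L^2_e}^2=\int_\Omega e^x\,dx=1$ then singles out the single front $\Omega=(-\infty,0]$: one interface minimizes the weighted perimeter, and the constraint pins $x_0=0$ because $\int_{-\infty}^{x_0}e^x\,dx=e^{x_0}$. This is the mechanism behind $u_\epsilon\to\chi_{(-\infty,0]}$.

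The speed is selected by the first-variation (Lagrange multiplier) condition in the front position. Differentiating $\mathcal{E}_c$ in $x_0$ and using self-adjointness of $\mathcal{L}_c$ gives $\tfrac{d}{dx_0}\mathcal{E}_c=e^{x_0}\big(\tfrac{\sqrt2}{12}-\tfrac{\alpha}{6\sqrt2}+\sigma\,v(0)\big)$, where $v=\mathcal{L}_c\chi_{(-\infty,0]}$ solves $c^2(v''+v')-\gamma v+\chi_{(-\infty,0)}=0$. Solving this linear ODE with the correct decay yields $v(0)=\tfrac{1}{2\gamma}\big(1-1/\sqrt{1+4\gamma/c^2}\big)$, so the multiplier vanishes precisely when $\sqrt{1+4\gamma/c^2}=1/h_*$, that is $c=c_f$ by \eqref{cinfty1}; assumption $(A1)$ is exactly what makes $h_*\in(0,1)$ and $c_f$ real and positive. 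For each $c$ the constrained minimizer $u_{c,\epsilon}$ of $\mathcal{I}_{c,\epsilon}$ carries a multiplier $\lambda(c,\epsilon)$ whose limit is this geometric expression; since $v(0)$ is strictly monotone in $c$, the limit has nonvanishing $c$-derivative at $c_f$, and solving $\lambda(c,\epsilon)=0$ by the implicit function theorem (or by the intermediate value theorem applied to a sign change of $\lambda$) produces $c_\epsilon\to c_f$. Setting $v_\epsilon=\mathcal{L}_{c_\epsilon}u_\epsilon$ recovers the full triple, and the fundamental theorem of $\Gamma$-convergence, together with the equicoercivity supplied by the weight, upgrades convergence to $u_\epsilon\to\chi_{(-\infty,0]}$ in $L^2_e$.

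I expect the main obstacle to lie in the $\Gamma$-convergence and compactness in this noncompact, weighted setting: the weight $e^x$ controls mass at $+\infty$ but degenerates at $-\infty$, where $u\to1$, so proving equicoercivity and ruling out loss of mass requires using the constraint to anchor the interface near $x_0=0$. A second delicate point is rigorously tying the vanishing of the multiplier to the geometric speed-selection condition: the equation carries an exact translation zero mode $\bar u_x\in L^2_e$, so a Lyapunov--Schmidt reduction is needed to separate the trivial translation direction from the genuine $c$-direction, and one must show that the reduced bifurcation function inherits the sign structure of the limiting multiplier $\tfrac{\sqrt2}{12}-\tfrac{\alpha}{6\sqrt2}+\sigma v(0)$ uniformly for small $\epsilon$.
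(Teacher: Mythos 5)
Your overall strategy -- eliminate $v$ via ${\cal L}_c$, form the weighted functional, prove $\Gamma$-convergence in $L^2_e$ to a geometric functional, and select the speed by the vanishing of the (translation) multiplier followed by an intermediate-value argument in $c$ -- is the same as the paper's, and your speed-selection identity $\tfrac{\sqrt2}{12}-\tfrac{\alpha}{6\sqrt2}+\sigma v(0)=0$ is algebraically identical to the paper's condition ${\cal F}(c_f)=0$. However, there is a genuine gap in your treatment of the limiting minimization. You assert that the constrained minimizer of the limit functional is the half-line because ``one interface minimizes the weighted perimeter,'' and you claim that $(A1)$ serves only to make $h_*\in(0,1)$. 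Neither is right. Under the constraint $\int_\Omega e^x\,dx=1$ the competitors are (after the paper's Lemma \ref{lem_E1}) single intervals $[a,b]$ with $e^b-e^a=1$, parametrized by $\ell=b-a\in(0,\infty]$; the perimeter excess of $[a,b]$ over $(-\infty,0]$ is $\tfrac{2e^{-\ell}}{1-e^{-\ell}}$, while the nonlocal term \emph{favors} the bounded interval by an amount of the same order $e^{-\ell}$ (see \eqref{difference}). The half-line wins if and only if $\tfrac{\sqrt2\,\gamma}{6\sigma}\geq H(c)$, which at $c=c_f$ is exactly the first inequality $\alpha\geq\tfrac{3\sqrt2\,\sigma}{\gamma}$ of $(A1)$ (Lemmas \ref{lem_large_l}--\ref{lem_front}). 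Without this step your argument cannot distinguish the front regime from the pulse regime of Theorem \ref{thm_epsilon_pulse}, and the conclusion $u_\epsilon\to\chi_{(-\infty,0]}$ would be false for part of the parameter range you implicitly allow.

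The second issue is the passage from constrained minimizer to actual solution, which you flag as an obstacle requiring a Lyapunov--Schmidt reduction around the translation mode. The paper resolves this with an elementary scaling identity that your proposal is missing: translation by $h$ multiplies both ${\cal I}_{c,d}(u)$ and $\|u\|_{L^2_e}^2$ by the same factor $e^{-h}$ (Lemma \ref{lem_shift}), so the Lagrange multiplier of the constraint $\|u\|_{L^2_e}^2=1$ at a constrained minimizer is proportional to the \emph{value} ${\cal I}_{c,d}(u)$ itself. Hence one tunes $c$ so that $\inf_Y I_{\epsilon,c}=0$ (by continuity of $c\mapsto\inf_Y I_{\epsilon,c}$ and the sign change $\inf_Y I_{\epsilon,c_f-\eta}>0>\inf_Y I_{\epsilon,c_f+\eta}$ supplied by $\Gamma$-convergence of minima and ${\cal F}(c_f-\eta)>0>{\cal F}(c_f+\eta)$), and the minimizer is then automatically an unconstrained critical point (Lemma \ref{lem_soln}); no reduction is needed. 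Relatedly, your appeal to the implicit function theorem for $\lambda(c,\epsilon)=0$ presumes differentiability in $c$ that is not established -- only continuity is available -- and you do not address existence of the constrained minimizer at fixed $\epsilon$ in this noncompact setting, which in the paper requires $\inf_Y{\cal I}_{c,d}\leq0$ to rule out loss of mass at $-\infty$ (Lemma \ref{lem_minimum}).
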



 In Theorem \ref{thm_front}, $c_f$ is unique determined when $\sigma,\gamma$ and $\alpha$ are given, and for small $\epsilon$ the {speeds}
 ${c_\epsilon}$ of traveling wave solutions {are} known 
{to leading order}. {Since the limit is unique, the convergence takes place along the whole sequence}. \\ 

{Next we investigate the question that the {$L^2_e$-limit} is a 
traveling pulse.} 
In this case, the parameters satisfy the following condition: \\ 

\noindent
 {($A 2$) $\quad 
 \frac{3 \sqrt{2} \sigma}{\gamma} > \alpha >1$}. \\ 

\begin{thm} \label{thm_epsilon_pulse}
Assumed that {($A 2$)} is satisfied. 
{There} is an $\epsilon_0>0$ such that if $\epsilon \leq \epsilon_0$, {there are $c_\epsilon$ and $d=\epsilon^2/c^2_\epsilon$
for which} 
a traveling wave 
 solution $(c_\epsilon,u_\epsilon,v_\epsilon)$ of \eqref{dc1}-\eqref{dc2} 
  exists {with $\| u_\epsilon \|_{L^2_e}=1$}. 
 Moreover 
 if $\epsilon \to 0$ then $c_\epsilon \to c_p$ and $u_\epsilon \to \chi_{[a,b]}$ in $L^2_e$, 
 where {$c_p$ and $b-a$ are}
  uniquely determined by the given parameters $\sigma,\gamma$ and $\alpha$.
\end{thm}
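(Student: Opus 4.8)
The plan is to prove Theorem \ref{thm_epsilon_pulse} by mirroring the variational/$\Gamma$-convergence machinery that underlies Theorem \ref{thm_front}, but adapting it to the pulse regime dictated by condition ($A2$). First I would recast the system \eqref{dc1}-\eqref{dc2} variationally in the weighted space $L^2_e$: solving \eqref{dc2} for $v$ in terms of $u$ via a weighted resolvent operator $v = {\cal L}_\gamma u$ (the analogue of ${\cal L}_D$, now carrying the drift term $c^2 v_x$ and the factor $\gamma$), I substitute back into \eqref{dc1} to obtain a single diffusion-integro equation whose solutions are critical points of an energy functional ${\cal I}_\epsilon(u)$ in $L^2_e$. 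The use of the weight $e^x$ is exactly what makes the first-order drift terms $dc^2 u_x$ and $c^2 v_x$ self-adjoint, so that the traveling-wave problem genuinely becomes a gradient flow / critical-point problem. I would then show that $\epsilon^{-1}{\cal I}_\epsilon$ $\Gamma$-converges in $L^2_e$ to a limiting geometric functional of the form \eqref{gammalimit} (with $\gamma$ general), whose admissible competitors are characteristic functions $\chi_\Omega$.

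\medskip
The decisive structural point is that under ($A2$), namely $\frac{3\sqrt2\,\sigma}{\gamma} > \alpha > 1$, the limiting functional is \emph{not} minimized by a front $\chi_{(-\infty,0]}$ but by a bounded interval $\chi_{[a,b]}$: the nonlocal inhibition term $\frac{\sigma}{2}\int_\Omega {\cal N}(\Omega)\,dx$ now dominates strongly enough to penalize unbounded $\Omega$, forcing a finite-length plateau. Concretely I would carry out the following steps. Step 1: establish compactness — any sequence $u_\epsilon$ with $\|u_\epsilon\|_{L^2_e}=1$ and uniformly bounded energy $\epsilon^{-1}{\cal I}_\epsilon(u_\epsilon) \le C$ is precompact in $L^2_e$ and its limits are of the form $\chi_\Omega$ (this is the standard Modica–Mortola equipartition argument, transplanted to the weighted measure). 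Step 2: prove the $\Gamma$-liminf and $\Gamma$-limsup inequalities, the latter by an explicit recovery sequence built from the one-dimensional optimal profile of the double-well potential. Step 3: minimize the limit functional over interval-type sets and solve the resulting one-variable reduced problem for the optimal half-length $\ell = (b-a)/2$ and the associated speed $c_p$; here condition ($A2$) guarantees a unique interior minimizer, yielding $c_p$ and $b-a$ as the asserted functions of $\sigma,\gamma,\alpha$.

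\medskip
The genuinely hard part, and the place where the pulse case diverges from the front case, is the \emph{simultaneous determination of the speed} $c_\epsilon$ together with the profile. The speed $c$ is not a free parameter but is slaved to the solvability of \eqref{dc1}-\eqref{dc2} through a solvability/orthogonality (Fredholm) condition; equivalently, imposing $\|u_\epsilon\|_{L^2_e}=1$ removes the translation degeneracy and turns $c$ into an eigenvalue-like unknown. I would set up a fixed-point or implicit-function scheme in which, for each small $\epsilon$, one first solves the profile equation at a trial speed and then adjusts $c$ to enforce the solvability condition; passing to the limit, the condition reduces precisely to the stationarity equation for $\ell$ and $c_p$ coming from the limit functional. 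The main obstacle is showing that this coupled scheme is a contraction (or has nondegenerate linearization) uniformly in $\epsilon$, which requires control of the linearized operator about the approximate pulse in the weighted space $L^2_e$ — in particular, ruling out spurious kernel beyond the expected one-dimensional translation mode. Once the linearization is shown to be invertible on the appropriate weighted complement, existence of $(c_\epsilon,u_\epsilon,v_\epsilon)$ for $\epsilon \le \epsilon_0$ follows, and the $\Gamma$-convergence of Steps 1--3 delivers $c_\epsilon \to c_p$ and $u_\epsilon \to \chi_{[a,b]}$ in $L^2_e$ with the limits uniquely pinned down by the parameters.
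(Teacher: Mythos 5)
Your Steps 1--3 (compactness, liminf/limsup inequalities, and reduction of the limit functional to the one-variable problem in $\ell$ and $c$) track the paper's Sections \ref{conv}--\ref{pul} faithfully: the paper does exactly this, obtaining the pulse length and speed from ${\cal J}(\ell,c)=0$, $\partial_\ell{\cal J}(\ell,c)=0$ under ($A2$), with uniqueness of $c_p$ coming from the monotonicity $\partial_c{\cal J}<0$ (Lemma \ref{lem_DcI}) and Lemma \ref{lem_unique_c}. The gap is in your final step, the one you yourself call ``the genuinely hard part.'' You propose to produce $(c_\epsilon,u_\epsilon,v_\epsilon)$ for small $\epsilon$ by a Lyapunov--Schmidt / implicit-function / contraction scheme around the approximate pulse, which requires invertibility of the linearized operator on the complement of the translation mode, uniformly in $\epsilon$. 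You name this nondegeneracy as the main obstacle but give no argument for it, and it is not a routine verification: spectral nondegeneracy of the linearization about a FitzHugh--Nagumo pulse in a weighted space is a substantial problem in its own right and nothing in the $\Gamma$-convergence framework you set up delivers it. As written, the existence claim for $\epsilon\le\epsilon_0$ is therefore not established.

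The paper avoids linearization entirely and stays variational. The speed is selected by a zero-energy condition: for fixed $c$ one considers $I_{\epsilon,c}={\cal I}_{c,d}/\epsilon$ on the constraint set $Y$ and uses that $\Gamma$-$\lim I_{\epsilon,c}=J^*_c$ forces $\inf_Y I_{\epsilon,c^-}>0$ and $\inf_Y I_{\epsilon,c^+}<0$ for small $\epsilon$, where $c^\pm=c_p\pm\eta$ (here one needs that $c\mapsto\inf J^*_c={\cal J}(L(c),c)/(1-e^{-L(c)})$ is strictly decreasing through $0$ at $c_p$, which follows from $\partial_c{\cal J}<0$). Continuity of $c\mapsto\inf_Y I_{\epsilon,c}$ and the intermediate value theorem then give $c_\epsilon$ with $\inf_Y I_{\epsilon,c_\epsilon}=0$; Lemma \ref{lem_minimum} provides a minimizer $u_\epsilon\in Y$, and the crucial point (Lemma \ref{lem_soln}) is that at energy level zero the Lagrange multiplier attached to the constraint $\|u\|_{L^2_e}=1$ vanishes --- because translation scales the functional by $e^a$ --- so the constrained minimizer is an honest critical point, i.e.\ a traveling wave. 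Convergence $c_\epsilon\to c_p$ and $u_\epsilon\to\chi_{[a,b]}$ then follows from compactness, the fundamental theorem of $\Gamma$-convergence, and the uniqueness statements of Section \ref{pul}. If you want to complete your proof along your own lines you would have to actually prove the uniform invertibility of the linearization; if you adopt the paper's route, you should make explicit the role of the identity $I_{\epsilon,c}(u(\cdot-a))=e^aI_{\epsilon,c}(u)$ and the zero-energy selection of $c_\epsilon$, neither of which appears in your sketch.
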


It was proved \cite{CCR,CCHR} that {when $N=1$ there always exists a single bubble;}
when $N=2$ there may exist 
zero, one, two, or even three bubble profiles, depending on the values of
$\alpha$ and $\sigma$, while if $N \geq 3$, 
there can be no more than two bubble profiles. 
The $\Gamma$-limit of higher dimensional traveling waves of \eqref{main11} is a future work in progress. \\    

{We give an outline for the remainder of the paper. 
In Section \ref{bv} we introduce a class of functions which are in the spaces of weighted bounded variation. They fit in with the $\Gamma$-convergence for the traveling wave problem.
The variational formulation 
for the FitzHugh-Nagumo system is given in Section \ref{var} and conditions 
for the $\Gamma$-convergence are verified 
 in Section \ref{conv}.  
 We then in Sections \ref{min}
demonstrate that the limiting functional has a minimizer that corresponds to traveling front or pulse. Section \ref{fr} and Section \ref{pul} will 
 distinguish front from pulse under different physical parameter regimes.
Moreover these front and pulse in the limiting case imply the existence of the traveling waves 
if $\epsilon$ is sufficiently small. }

\section{Weighted BV function} \label{bv}
\setcounter{equation}{0}
\renewcommand{\theequation}{\thesection.\arabic{equation}}
{If $(x,y) \in \Omega = {\mathbb R} \times {\widetilde{\Omega}}$ for 
a bounded smooth domain {$\widetilde{\Omega}$}  in ${\mathbb R}^{n-1}$, 
we write $|(x,y)|=\sqrt{x^2+ \sum_{i=1}^{n-1} y_i^2}$ to denote 
the usual Euclidean norm. Let $L^p_e =\{w: \int_\Omega  e^x |w|^p \,dx \, dy < \infty \}$ for $p \geq 1$.} \\ 



\noindent
{\em Definition:} Given a function $u \in L^1_{loc}(\Omega)$ the total variation of $u$ in $\Omega$  with respect to  
the measure $e^xdx\, dy$ is defined as
\begin{equation} \label{def_bv}
\|Du\|_e(\Omega)=\sup \{ \int_\Omega u \, \mbox{div} \, (e^x \varphi ) \, dx\, dy:  \; \varphi \in C^1_0(\Omega; {\mathbb R}^n), |\varphi| \leq 1 \} \;.
\end{equation}
If $\|Du\|_e(\Omega)<\infty$, it is easily checked \cite{B} that  for  any open set $\Omega'\subset\!\subset\Omega$ the total variation of $u$ in $\Omega'$
$$
\|Du\|(\Omega')=\sup \{ \int_{\Omega'} u \, \mbox{div} \, \varphi  \, dx\, dy:  \; \varphi \in C^1_0(\Omega'; {\mathbb R}^n), |\varphi| \leq 1 \}
$$
is also finite. Therefore if $u \in L^1_{loc}(\Omega)$ and $\|Du\|_e(\Omega)<\infty$, {then} the function $u$ has bounded variation {on} any $\Omega'\subset\!\subset\Omega$. By the Riesz representation theorem there exist a Radon measure $\mu$ and a $\mu$-measurable function $\sigma$ with $|\sigma|=1$ such that for any  $\varphi\in C^1_0(\Omega;\mathbb R^n)$
\begin{equation}\label{mia}
 \int_\Omega u \, \mbox{div} \, \varphi  \, dx\, dy = - \int_\Omega \varphi \cdot \sigma \, d \mu\;.
\end{equation}
It follows from \eqref{mia} that the vector valued measure $\sigma d\mu$ coincides with the distributional gradient $Du$ of $u$. Hence, if  $u$ is smooth
then {$\sigma d\mu=  \nabla u \,dx \,dy$ and $d \mu=|\nabla u| \,dx \, dy$.}

Note that \eqref{mia} in particular {implies} that  for any  $\varphi\in C^1_0(\Omega;\mathbb R^n)$
\begin{equation} \label{def_mu}
 \int_\Omega u \, \mbox{div} \, (e^x \varphi ) \, dx\, dy = - \int_\Omega e^x \varphi \cdot \sigma \, d \mu \;.
\end{equation}
For $\varphi=(\varphi_1, \varphi_2, \dots, \varphi_n)$, it is clear that 
 $e^x \, \mbox{div} \,\varphi=
\mbox{div} \,(e^x \varphi)- e^x \varphi_1$ and thus
\begin{equation} \label{by_parts}
\int_\Omega e^x u \, \mbox{div} \, \varphi \, dx \, dy= - \int_\Omega (e^x u \varphi_1 \, dx \, dy + e^x\varphi \cdot \sigma \, d\mu) \;,
\end{equation}
a formula that we will use later. 

 We shall denote by $BV_e(\Omega)$ the set of functions $u\in L^1_e(\Omega)$ such that $\|Du\|_e(\Omega)<\infty$. {This function space} is equipped with the norm
 \[
 \| u \|_{BV_e}= \| Du \|_{e} (\Omega) + \|u\|_{L^1_e}\;.
 \]
Also, if $u\in BV_e(\Omega)$ then
\begin{equation}\label{form}
\|Du\|_e(\Omega)=\int_\Omega e^x\,d\mu\;.
\end{equation}
{The first of the next two lemmas is a straightforward consequence of 
 \eqref{def_bv}, {while} the second can be proved exactly the same as the standard 
 $BV$ functions;} see  \cite[p.172, Theorem 2]{EG}.  
\begin{lemma} (lower semicontinuity of weighted variation measure) \label{lem_semi} \\
Suppose $\{ u_k \} \subset BV_e$
and $u_k \to u_0$ in $L^1_{e, \, loc}$, then 
$\| Du_0 \|_e (\Omega)  \leq {\liminf_{k\to\infty}} \| Du_k \|_e (\Omega)$.
\end{lemma}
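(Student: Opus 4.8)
The plan is to exploit the fact that, by its very definition \eqref{def_bv}, the weighted variation $\|Du\|_e(\Omega)$ is a supremum of functionals that depend linearly on $u$; since a pointwise supremum of continuous functionals is automatically lower semicontinuous, the result follows once we verify that each individual functional passes to the limit along $u_k \to u_0$. The only point demanding care is this passage to the limit under the integral sign, which must be arranged so that merely \emph{local} weighted $L^1$ convergence is enough.

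First I would fix an arbitrary admissible test field $\varphi \in C^1_0(\Omega; {\mathbb R}^n)$ with $|\varphi| \leq 1$ and choose an open set $\Omega' \subset\!\subset \Omega$ containing $\mbox{supp}\,\varphi$. Directly from \eqref{def_bv}, for every $k$ we have
$$
\int_\Omega u_k \, \mbox{div}\,(e^x \varphi)\, dx\, dy \;\leq\; \|Du_k\|_e(\Omega).
$$
To let $k \to \infty$ on the left, I would use the elementary identity $\mbox{div}\,(e^x\varphi) = e^x(\varphi_1 + \mbox{div}\,\varphi)$ already recorded in the derivation of \eqref{by_parts}. Since this function is supported in $\Omega'$, we get
$$
\left| \int_\Omega (u_k - u_0)\, \mbox{div}\,(e^x\varphi)\, dx\, dy \right| \;\leq\; \|\varphi_1 + \mbox{div}\,\varphi\|_{L^\infty}\, \int_{\Omega'} e^x\, |u_k - u_0|\, dx\, dy,
$$
and the right-hand side tends to $0$ because $u_k \to u_0$ in $L^1_{e,\,loc}$ and the integral is taken only over $\Omega'$. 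Hence $\int_\Omega u_k \, \mbox{div}\,(e^x\varphi) \to \int_\Omega u_0 \, \mbox{div}\,(e^x\varphi)$, and combining with the displayed inequality yields
$$
\int_\Omega u_0 \, \mbox{div}\,(e^x\varphi)\, dx\, dy \;\leq\; \liminf_{k\to\infty} \|Du_k\|_e(\Omega).
$$

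Finally I would take the supremum over all admissible $\varphi$ on the left-hand side; by \eqref{def_bv} this supremum is precisely $\|Du_0\|_e(\Omega)$, which gives the assertion. I do not expect a genuine obstacle: the argument is the weighted transcription of the classical lower semicontinuity proof for ordinary $BV$ functions, with the weight $e^x$ entering only as a fixed smooth positive factor that stays bounded on the compact support of each test field. The sole delicate point, and the reason I insert $\Omega'$ above, is to ensure that local weighted $L^1$ convergence suffices, so that no control of $u_k - u_0$ near the unbounded end of $\Omega = {\mathbb R} \times \widetilde\Omega$ is required.
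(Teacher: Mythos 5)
Your argument is correct and is exactly the route the paper intends: the paper offers no written proof, stating only that the lemma is ``a straightforward consequence of \eqref{def_bv},'' and your proof is precisely that consequence spelled out --- fix a test field, pass to the limit using $L^1_{e,\,loc}$ convergence on its compact support, and take the supremum, so that $\|Du_0\|_e(\Omega)$ is realized as a supremum of functionals each dominated by $\liminf_{k\to\infty}\|Du_k\|_e(\Omega)$. Your attention to localizing on $\Omega'\subset\!\subset\Omega$ so that only local weighted $L^1$ convergence is needed is the one genuinely delicate point, and you have handled it correctly.
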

\begin{lemma} (local approximation by smooth functions) \label{lem_approx} \\
Suppose $u \in BV_e$. Then there exists function $\{ u_k \} \subset BV_e \cap C^{\infty}$ such that\\
(i) $u_k \to u$ in $L^1_e$ and \\
(ii) $\| Du_k \|_e(\Omega) \to \|Du \|_e(\Omega)$ as $k \to \infty$.
\end{lemma}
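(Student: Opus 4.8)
The plan is to mimic the classical mollification proof for $BV$ functions (as in \cite[p.172]{EG}), inserting the weight $e^x$ wherever needed and exploiting one clean algebraic feature of this particular weight. First I would fix $\delta>0$ and, since $\|Du\|_e(\Omega)=\int_\Omega e^x\,d\mu<\infty$ by \eqref{form}, choose $m$ so large that $\int_{\Omega\setminus\Omega_m}e^x\,d\mu<\delta$, where $\Omega_k=\{(x,y)\in\Omega:\ \mathrm{dist}((x,y),\partial\Omega)>1/(m+k)\}\cap B(0,m+k)$ is the usual exhaustion. On the cylinder $\Omega={\mathbb R}\times\widetilde\Omega$ these sets are relatively compact and simultaneously handle the lateral boundary ${\mathbb R}\times\partial\widetilde\Omega$ and the two ends $x\to\pm\infty$. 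I would then fix a locally finite open cover $\{V_k\}$ with each $V_k\subset\!\subset\Omega$ subordinate to this exhaustion, together with a smooth partition of unity $\{\zeta_k\}$ satisfying $\sum_k\zeta_k\equiv1$, $\mathrm{supp}\,\zeta_k\subset V_k$, and $\sum_k\nabla\zeta_k\equiv0$.

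The one extra ingredient, and the reason the weighted statement is no harder than the unweighted one, is the identity $(\eta_\varepsilon*e^{\,(\cdot)_1})(z)=C_\varepsilon e^{z_1}$ with $C_\varepsilon=\int\eta_\varepsilon(w)e^{-w_1}\,dw\to1$ as $\varepsilon\to0$, where $\eta_\varepsilon$ is the standard mollifier: convolution merely multiplies the weight by a constant close to $1$ (and $C_\varepsilon\ge1$ for a symmetric mollifier). Using it I would pick, for each $k$, a radius $\varepsilon_k>0$ small enough that (a) $\mathrm{supp}\big(\eta_{\varepsilon_k}*(u\zeta_k)\big)\subset V_k$, (b) $\int_\Omega e^x\big|\eta_{\varepsilon_k}*(u\zeta_k)-u\zeta_k\big|<\delta/2^k$ and $\int_\Omega e^x\big|\eta_{\varepsilon_k}*(u\nabla\zeta_k)-u\nabla\zeta_k\big|<\delta/2^k$, and (c) $C_{\varepsilon_k}\le1+\delta$. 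Requirement (b) is legitimate because each $V_k$ is relatively compact, so $e^x$ is bounded above and below there; the weighted errors are then controlled by the ordinary $L^1$ mollification errors of the compactly supported $L^1$ functions $u\zeta_k$ and $u\nabla\zeta_k$, which tend to $0$. I would then set $u_\delta=\sum_k\eta_{\varepsilon_k}*(u\zeta_k)$, smooth because the sum is locally finite; summing the first half of (b) and using $\sum_k u\zeta_k=u$ gives (i).

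For (ii) I would estimate $\|Du_\delta\|_e(\Omega)=\int_\Omega e^x|\nabla u_\delta|\,dx\,dy$ by splitting $\nabla(u\zeta_k)=\zeta_k\,Du+u\,\nabla\zeta_k$. The contribution $\sum_k\eta_{\varepsilon_k}*(u\nabla\zeta_k)$ equals $\sum_k\big[\eta_{\varepsilon_k}*(u\nabla\zeta_k)-u\nabla\zeta_k\big]$ since $\sum_k\nabla\zeta_k\equiv0$, so its weighted $L^1$ norm is below $\delta$ by (b). For the main term I would use $|\eta_{\varepsilon_k}*(\zeta_k Du)|\le\eta_{\varepsilon_k}*(\zeta_k\,d\mu)$, Fubini, and the weight identity to obtain $\int_\Omega e^x\big|\eta_{\varepsilon_k}*(\zeta_kDu)\big|\le C_{\varepsilon_k}\int_\Omega e^x\zeta_k\,d\mu$; summing over $k$ and using $\sum_k C_{\varepsilon_k}\zeta_k\le(1+\delta)\sum_k\zeta_k=1+\delta$ yields $\|Du_\delta\|_e(\Omega)\le(1+\delta)\|Du\|_e(\Omega)+\delta$. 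Taking $\delta=1/k$ and relabelling produces a sequence $\{u_k\}\subset BV_e\cap C^\infty$ with $u_k\to u$ in $L^1_e$ and $\limsup_k\|Du_k\|_e(\Omega)\le\|Du\|_e(\Omega)$; combined with the lower semicontinuity of Lemma \ref{lem_semi} (noting that $L^1_e$ convergence implies $L^1_{e,\,loc}$ convergence) this forces $\|Du_k\|_e(\Omega)\to\|Du\|_e(\Omega)$, which is (ii).

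I expect the main obstacle to be the bookkeeping in the unbounded cylinder rather than any single estimate: one must verify that the end $x\to+\infty$, where $e^x$ blows up, contributes negligibly. This is precisely where the finiteness of $\int_\Omega e^x\,d\mu$ and of $\int_\Omega e^x|u|$ is used, together with the fact that mollification perturbs the weight only by the bounded factor $C_{\varepsilon_k}$ rather than distorting it. Once the exhaustion $\{\Omega_m\}$ and the weight identity are in place, every remaining step is formally identical to the classical $BV$ argument.
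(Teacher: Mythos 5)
Your proposal is correct and is essentially the argument the paper intends: the paper gives no details, stating only that the lemma "can be proved exactly the same as the standard $BV$ functions" with a citation to Evans--Gariepy, and your write-up is precisely that classical partition-of-unity mollification proof adapted to the weight. The one genuinely new ingredient you supply --- the identity $\eta_\varepsilon * e^{(\cdot)_1} = C_\varepsilon e^{(\cdot)_1}$ with $C_\varepsilon \to 1$, which makes the weighted estimates go through verbatim --- is exactly what is needed to justify the paper's claim.
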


Our attention will be focused on the one dimensional case {from now on}; that is, 
 $\Omega=(-\infty,\infty)$. Suppose $u=\chi_{[a,b]}$ with $-\infty \leq a <b < \infty$,
an easy calculation gives
\begin{equation}\label{fac}
\| D\chi_{[a,b]} \|_e(\Omega)= \sup_{|\varphi|\leq1} \int_a^b (e^x \varphi)' \, dx=
 \sup_{|\varphi|\leq1} \{ e^x \varphi |_{x=a}^b \}
 = e^b+ e^a \;.
\end{equation}
The following lemma will be useful in the sequel. 
\begin{lemma} \label{lem_converse}
If $E= \cup_i [a_i,b_i]$ is a union of countably many disjoint intervals, then
\begin{equation} \label{Du}
\|D \chi_E \|_e (\Omega)= \sum_i (e^{a_i}+e^{b_i}) \;.
\end{equation}
Moreover $\chi_E \in BV_e$ if and  only if $E$ is the union of countably many disjoint intervals $[a_i,b_i]$ and the right hand side of \eqref{Du} is finite.
\end{lemma}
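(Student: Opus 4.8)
The plan is to establish the identity \eqref{Du} by sandwiching $\|D\chi_E\|_e(\Omega)$ between a lower and an upper bound that both equal $\sum_i(e^{a_i}+e^{b_i})$, and then to read off the characterization of $BV_e$ membership from this identity together with the inclusion $BV_e(\Omega)\subset L^1_e(\Omega)$. Throughout I would use that in the one dimensional setting $\varphi\in C^1_0(\Omega;\mathbb R)$ and $\mbox{div}(e^x\varphi)=(e^x\varphi)'$, so that for any such test field
\[
\int_\Omega \chi_E\,(e^x\varphi)'\,dx=\sum_i\int_{a_i}^{b_i}(e^x\varphi)'\,dx=\sum_i\bigl(e^{b_i}\varphi(b_i)-e^{a_i}\varphi(a_i)\bigr),
\]
where only the finitely many intervals meeting $\mbox{supp}\,\varphi$ contribute, and where a half-line with $a_i=-\infty$ contributes only $e^{b_i}\varphi(b_i)$ since $e^x\varphi\to0$ as $x\to-\infty$.

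For the upper bound I would simply invoke $|\varphi|\le1$ to estimate each summand by $|e^{b_i}\varphi(b_i)-e^{a_i}\varphi(a_i)|\le e^{b_i}+e^{a_i}$, giving $\int_\Omega\chi_E(e^x\varphi)'\,dx\le\sum_i(e^{a_i}+e^{b_i})$ and hence, after taking the supremum, $\|D\chi_E\|_e(\Omega)\le\sum_i(e^{a_i}+e^{b_i})$. For the matching lower bound I would fix any finite index set and, using that finitely many disjoint closed intervals are separated by gaps of positive length, construct a single $\varphi\in C^1_0$ with $|\varphi|\le1$, $\varphi(a_i)=-1$ and $\varphi(b_i)=+1$ on those intervals, interpolating smoothly across the gaps and decaying to $0$ outside a compact set, exactly as in the one-interval computation \eqref{fac}. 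Such $\varphi$ makes the displayed sum equal $\sum_i(e^{a_i}+e^{b_i})$ over the chosen finite set; letting the finite set exhaust the indices yields $\|D\chi_E\|_e(\Omega)\ge\sum_i(e^{a_i}+e^{b_i})$, and \eqref{Du} follows.

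The characterization then splits into two implications. For the direction asserting $\chi_E\in BV_e$, I would note that finiteness of the right side of \eqref{Du} gives $\|D\chi_E\|_e(\Omega)<\infty$ by the identity just proved, while $\int_E e^x\,dx=\sum_i(e^{b_i}-e^{a_i})\le\sum_i(e^{a_i}+e^{b_i})<\infty$ shows $\chi_E\in L^1_e$, so that $\chi_E\in BV_e$. For the converse I would argue that $\chi_E\in BV_e$ forces $E$ to have the stated structure: on any $\Omega'=(-R,R)$ one has $\|D\chi_E\|(\Omega')\le e^{R}\|D\chi_E\|_e(\Omega)<\infty$, so $\chi_E\in BV(\Omega')$, and by the one dimensional structure of sets of finite perimeter (see \cite{EG}) the set $E\cap\Omega'$ agrees, up to a Lebesgue null set, with a finite union of disjoint intervals; letting $R\to\infty$ exhibits $E$ as a countable disjoint union $\cup_i[a_i,b_i]$, with no interval reaching $+\infty$ since $\chi_E\in L^1_e$. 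Feeding this structure back into \eqref{Du} gives $\sum_i(e^{a_i}+e^{b_i})=\|D\chi_E\|_e(\Omega)<\infty$, which is the remaining claim.

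I expect the only delicate point to be the converse structural step: passing from local finiteness of the perimeter on each $(-R,R)$ to a globally consistent, countable interval decomposition of $E$ modulo null sets, and checking that no component escapes to $+\infty$. The bound-matching for \eqref{Du} itself should be routine once the interpolating test field is built, the main care there being the smooth, compactly supported construction that attains $\varphi=\pm1$ at the prescribed endpoints.
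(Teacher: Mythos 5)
Your overall route coincides with the paper's: reduce \eqref{Du} to the single-interval computation \eqref{fac}; get $\chi_E\in BV_e$ from finiteness of the sum together with the bound $\int_E e^x\,dx\le\sum_i e^{b_i}$; and, for the converse, use the one-dimensional structure of sets of locally finite perimeter (the paper cites \cite[Proposition 3.52]{AFP} where you cite \cite{EG}; same content). The one step where your write-up has a genuine gap is the lower bound for \eqref{Du}. You build $\varphi$ with $\varphi(a_i)=-1$, $\varphi(b_i)=+1$ for $i$ in a finite subfamily $F$ and assert that the pairing then equals $\sum_{i\in F}(e^{a_i}+e^{b_i})$; but the pairing is a sum over \emph{all} intervals of $E$ meeting the support of $\varphi$, and each unselected interval contributes $e^{b_j}\varphi(b_j)-e^{a_j}\varphi(a_j)$, which need not vanish and can be negative. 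This matters precisely in the case the lemma allows, namely when infinitely many intervals of $E$ accumulate at an endpoint of a selected interval: continuity forces $\varphi\approx\pm1$ at the endpoints of all nearby unselected intervals, so "interpolating smoothly across the gaps" can lose a contribution comparable to the main term, and the partial sums need not exhaust $\sum_i(e^{a_i}+e^{b_i})$ when that series diverges.

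The repair is either to place every transition of $\varphi$ inside a gap of $E$ and keep $\varphi$ locally constant elsewhere, so each unselected interval contributes either $0$ or the nonnegative quantity $e^{b_j}-e^{a_j}$; or, closer to what the paper implicitly does, to note that $D\chi_E=\sum_i(\delta_{a_i}-\delta_{b_i})$ as a measure, hence $\mu=\sum_i(\delta_{a_i}+\delta_{b_i})$, and read \eqref{Du} off from \eqref{form} combined with your upper bound. With that adjustment your argument is complete and matches the paper's proof, which states \eqref{Du} as an immediate consequence of \eqref{fac} without detail.
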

\begin{proof}
It is clear that \eqref{Du} is an immediate consequence of \eqref{fac}. Furthermore, if  $E= \cup_i [a_i,b_i]$ is the union of countably many disjoint intervals and the right hand side of \eqref{Du} is finite, then 
$$
\int_{-\infty}^{+\infty}e^x\chi_E(x)\,dx\leq \sum_ie^{b_i}<\infty.
$$
This implies {$\chi_E \in L^1_e$} and thus $\chi_E \in BV_e$. \\

Conversely, if $\chi_E\in BV_e$, then, as observed before, $\chi_E$ has finite total variation in any bounded interval. It follows that, (see, e.g. \cite[Proposition 3.52]{AFP}), $E$ is the union of countably many disjoint intervals $[a_i,b_i]$ and the conclusion {is immediate} from \eqref{Du}.
\end{proof}

%

\begin{lemma} \label{lem_shift}
Let $u \in BV_e$ and $h \in {\mathbb R}$. Then \\
(i) $\| u(\cdot+h) \|_{L^1_e}=e^{-h} \|u \|_{L^1_e}$; \\
(ii) 
{ $\| D (u(\cdot+h)) \|_e (\Omega) = e^{-h} \| Du \|_e (\Omega)$}; \\
 (iii) $u^+$ and $u^-$ are in $BV_e$ with $\| Du^+ \|_e \leq \| Du \|_e$ and $\| Du^- \|_e \leq \| Du \|_e$;\\
 (iv) $\| u \|_{L^1_e} \leq  \| Du \|_e (\Omega)$; \\
 (v) $|u(x)| e^x \leq  2 \| Du \|_e(\Omega)$ a.e..
\end{lemma}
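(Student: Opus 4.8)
The plan is to obtain the two translation identities (i)--(ii) by a change of variables, the bounds (iv)--(v) from a one-sided primitive representation of $u$ together with Fubini's theorem, and the truncation estimate (iii) from the corresponding fact for ordinary $BV$ functions. For (i) I would substitute $y=x+h$ in $\int_\Omega e^x|u(x+h)|\,dx$, the factor $e^{-h}$ coming from $e^x=e^{-h}e^{y}$. For (ii) the same substitution is carried out inside the supremum \eqref{def_bv}: in one dimension $\mathrm{div}(e^x\varphi)=(e^x\varphi)'$, and putting $\psi(y)=\varphi(y-h)$ --- which ranges over exactly the admissible class $C^1_0(\Omega)$, $|\psi|\le 1$, as $\varphi$ does --- one has $\frac{d}{dx}(e^x\varphi)\big|_{x=y-h}=e^{-h}(e^{y}\psi)'$, so each competitor for $\|D(u(\cdot+h))\|_e$ is $e^{-h}$ times a competitor for $\|Du\|_e$ and conversely; taking suprema gives (ii).

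The heart of the matter is (iv)--(v). I would first recall from \eqref{form} that $\|Du\|_e(\Omega)=\int_\Omega e^{x}\,d\mu$, where $\mu=|Du|$ is the total variation measure and $Du=\sigma\,d\mu$ the distributional derivative. The essential observation, particular to this weighted setting, is that finiteness of $\|Du\|_e$ forces the variation to be summable at $+\infty$:
\[
\mu\big((R,\infty)\big)\le e^{-R}\int_{(R,\infty)}e^{x}\,d\mu\le e^{-R}\,\|Du\|_e ,
\]
which tends to $0$ as $R\to\infty$. Hence $u$ has finite ordinary variation near $+\infty$, the limit $u(+\infty)$ exists, and, since $u\in L^1_e$, it must vanish; the good representative therefore satisfies $u(x)=-Du\big((x,\infty)\big)$, so that $|u(x)|\le\mu\big((x,\infty)\big)$ for a.e.\ $x$. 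Because $e^{x}\le e^{y}$ on $(x,\infty)$, this gives at once
\[
e^{x}|u(x)|\le\int_{(x,\infty)}e^{x}\,d\mu\le\int_{(x,\infty)}e^{y}\,d\mu\le\|Du\|_e\le 2\|Du\|_e ,
\]
which is (v) (indeed with a sharper constant), and integrating $|u(x)|\le\mu((x,\infty))$ against $e^{x}\,dx$ and applying Fubini yields
\[
\|u\|_{L^1_e}\le\int_\Omega e^{x}\,\mu\big((x,\infty)\big)\,dx=\int_\Omega\Big(\int_{-\infty}^{y}e^{x}\,dx\Big)\,d\mu(y)=\int_\Omega e^{y}\,d\mu=\|Du\|_e ,
\]
which is (iv).

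For (iii) note that $0\le u^+\le|u|\in L^1_e$ gives $u^+\in L^1_e$, and that $u^-=(-u)^+$ with $\|D(-u)\|_e=\|Du\|_e$ (replace $\varphi$ by $-\varphi$ in \eqref{def_bv}), so it suffices to bound $\|Du^+\|_e$. Since $u\in BV_e\subset BV_{loc}$ and $t\mapsto t^+$ is $1$-Lipschitz, the standard truncation inequality for $BV$ functions gives $|Du^+|\le|Du|=\mu$ as Radon measures; multiplying by $e^{x}$ and using \eqref{form} yields $\|Du^+\|_e=\int_\Omega e^{x}\,d|Du^+|\le\int_\Omega e^{x}\,d\mu=\|Du\|_e$. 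Alternatively one may approximate $u$ by smooth $u_k$ as in Lemma \ref{lem_approx}, compose with smooth $\eta_\delta$ satisfying $0\le\eta_\delta\le(\cdot)^+$ and $0\le\eta_\delta'\le1$, use the smooth identity $\|Dv\|_e=\int_\Omega e^{x}|v'|\,dx$, and pass to the limit by the lower semicontinuity of Lemma \ref{lem_semi}; here one must keep $0\le\eta_\delta\le(\cdot)^+$ precisely so that dominated convergence applies in $L^1_e$, the weight $e^{x}$ being non-integrable at $+\infty$.

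I expect the main obstacle to be the representation step underpinning (iv)--(v): one must justify, for the weighted class $BV_e$ rather than for functions of globally bounded variation, that the exponential weight renders the total variation summable near $+\infty$, so that the representative with $u(+\infty)=0$ and the primitive formula $u(x)=-Du((x,\infty))$ are legitimate. Once this is secured, (iv) and (v) follow immediately from Fubini and the monotonicity of $e^{x}$, and the remaining parts are adaptations of standard $BV$ facts.
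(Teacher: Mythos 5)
Your proof is correct, but for the substantive parts (iv)--(v) it follows a genuinely different route from the paper's. The paper reduces (iv)--(v) to smooth functions via the approximation Lemma~\ref{lem_approx}, proves (iv) for $u\ge 0$ by testing against a cut-off $\varphi$ and using the integration-by-parts identity \eqref{by_parts}, splits the general case as $u=u^+-u^-$, and then gets (v) for smooth compactly supported $u$ from the identity $-ue^x=\int_x^\infty e^t(u+u')\,dt$, which is why its constant is $2$ (it pays once for $\|u\|_{L^1_e}$ and once for $\|Du\|_e$). You instead work directly at the level of the variation measure: the observation $\mu((R,\infty))\le e^{-R}\|Du\|_e(\Omega)$ shows $u$ is genuinely $BV$ near $+\infty$ with $u(+\infty)=0$ (forced by $u\in L^1_e$ and the non-integrability of $e^x$ at $+\infty$), so the precise representative satisfies $|u(x)|\le\mu((x,\infty))$; then (v) follows from monotonicity of $e^x$ with the sharper constant $1$, and (iv) from Tonelli. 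Your approach avoids the approximation lemma and the cut-off bookkeeping entirely, and correctly identifies (and resolves) the one point needing care, namely that $BV_e$ only controls the variation at $+\infty$, not at $-\infty$, but that is exactly the side needed for the primitive formula. Parts (i)--(iii) are essentially as in the paper; for (iii) the paper uses the exact identity $d(Du^+)=\chi_{\{u>0\}}\,d(Du)$ from \cite{AFP} while you use the weaker (but sufficient) $1$-Lipschitz truncation bound $|Du^+|\le|Du|$.
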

\begin{proof}
(i) to  (ii) are directly follow from 
 the definitions. \\

If $u\in BV_e$ then $u$ has locally finite variation in $\mathbb R$. This implies that $u^+$ has locally finite variation in $\mathbb R$ and $d(Du^+)=\chi_{\{u>0\}}d(Du)=\chi_{\{u>0\}}\sigma d\mu$ (see \cite[Example 3.100]{AFP}). It follows from \eqref{form} that 
$$
\|Du^+\|_e(\Omega)=\int_{\{u>0\}} e^x\,d\mu\leq \|Du\|_e(\Omega)\;.
$$
 Similar calculations work for $u^-$, so
%
 (iii) follows. \\

By Lemma~\ref{lem_approx} it suffices to verify 
(iv) and (v) for smooth $u$ only. 
Consider the case $u \geq 0$ first. For any $\delta>0$, 
 there exists a large $R>0$ such that
$0 \leq \int_{|x| > R} e^x u \,dx \leq \delta$. 
 Take 
a $\varphi \in C^{\infty}_0({\mathbb R})$ such that  $\varphi=1$
on $[-R,R]$, $\varphi =0$ for $|x| \geq R+2$, and $0 \leq \varphi \leq 1$ on $[R,R+2]$ with 
$|\varphi'| \leq 1$. Then
\begin{eqnarray*}
\| u \|_{L^1_e} -\delta & \leq & \int_{-\infty}^\infty e^x u \varphi \,dx = - \int_{-\infty}^\infty e^x\varphi \cdot \sigma \, d \mu
 - \int_{-\infty}^{\infty} e^x u \, \varphi' \, dx \\
 & \leq & \| Du \|_e (\Omega) +  \int_{R \leq |x| \leq R+2} e^x |u|  \, dx \\
 & \leq & \| Du \|_e (\Omega)+ \delta \;.
 \end{eqnarray*}
This gives $\| u \|_{L^1_e} \leq \| Du \|_e (\Omega)$ if $u \geq 0$. 
 In the general case, setting 
 $u=u^+-u^-$ yields 
 \begin{eqnarray*}
 \| u \|_{L^1_e} & =  &\| u^+ \|_{L^1_e} + \| u^- \|_{L^1_e}  \\
   & \leq & \| Du^+ \|_{L^1_e} + \| Du^-\|_{L^1_e} \\
   & = & \| Du \|_{L^1_e} \\
   &\leq & \| Du \|_e (\Omega) 
 \end{eqnarray*}
which completes the proof of (iv). \\
 
 Suppose $u$ is smooth and {with compact support}. Then ${-u e^x} =\int_x^\infty (ue^t)' \, dt= \int_x^\infty e^t (u+u') \, dt$. Therefore 
 \[
 |u(x)| e^x \leq \int_{x}^\infty e^t (|u|+|u'|) \, dt \leq \| u \|_{L^1_e} + \|Du \|_e (\Omega) \leq 2 \| Du \|_e (\Omega) \;,
 \]
 which yields (v) in this special case. {This inequality can now be extended to $u \in C^{\infty}$ by using $u \varphi$ as an approximation, 
 where $\varphi$ is the cut-off function that we employed earlier.}
\end{proof}
%
%
 

\section{{Variational formulation}} \label{var} 
\setcounter{equation}{0}
\renewcommand{\theequation}{\thesection.\arabic{equation}}

We now turn to the variational formulation for studying the traveling waves of (\ref{main11}). 
Let 
 $H^1_e=\{ w: \int_{-\infty}^\infty e^x (w'^{\,2}+w^2) \,dx < \infty \}$ and {$F_\epsilon(w)  \equiv  - \int_0^{w} f_\epsilon(\xi)  \; d\xi $.} 
For a given $u$, we designate the unique solution of 
 (\ref{dc2}) 
by $v={\cal L}_c u$; {here} 
${\cal L}_c: L^2_e \to H^1_e$ {is} a self-adjoint operator with respect to the inner product on $L^2_e$.
For given $c, d > 0$, let ${\cal I}_{c,d}: H^1_e \to \mathbb{R}$ defined by 
\begin{equation} \label{jc}
{{\cal I}_{c,d}}(w)= \int_{-\infty}^\infty e^x (\frac{dc^2}{2} w'^{\,2}+ F_\epsilon(w) + \frac{\epsilon \sigma}{2} w {\cal L}_{c} w ) \, dx \;.
\end{equation}
{The standard variational argument shows that $(u,v,c)$ solves (\ref{dc1})-(\ref{dc2})  
 provided $v={\cal L}_{c} u$ and 
 $u$ is a critical point of ${\cal I}_{c,d}$.
 For easy referral, the terms on the right of (\ref{jc}) are called the {gradient energy, the $F$-integral} (or potential)} and 
the nonlocal energy, respectively. The nonlocal energy is always non-negative since
 $\int_{-\infty}^{\infty} e^x w \, {\cal L}_{c} w \, dx \geq 0$ for any $w \in L^2_e$. \\

Given an $\epsilon_1>0$ such that $\max\{ \frac{1}{\sigma}, \frac{1}{2 \alpha} \} \geq \epsilon_1>0$. 
For all 
$\epsilon \in (0,\epsilon_1)$, 
there exist 
$\beta_2>1$ with $\beta_2-1$ being small and
$\widetilde{M_1}=\widetilde{M_1}(\gamma) >0$ satisfying {$f_\epsilon(-\widetilde{M_1}) 
\geq \frac{\beta_2 }{\gamma}$.}
We consider ${\cal I}_{c,d}:Y \to {\mathbb R}$ with $Y$ being a admissible set defined by 
\[
Y \equiv \{ w \in H^1_e: \; \int_{-\infty}^\infty e^x w^2 \, dx=1, \; - \widetilde{M_1}-1 \leq w \leq \beta_2 \}. 
\]
The constraint 
$\| w \|_{L^2_e}=1$ imposed in $Y$ is
to eliminate a continuum of critical points due to translation. Suppose $u \in Y$ is a constrained minimizer of ${\cal I}_{c,d}$, 
 it is the sought-after traveling wave solution provided ${\cal I}_{c,d}(u)=0$ and 
 $-\widetilde{M_1}-1<u<\beta_2$. We refer to \cite{CC2} for the detailed argument.\\ 

{The constrained variational approach has been employed \cite{CCH,CC2,CC3} to establish the existence of traveling wave solutions of  FitzHugh-Nagumo system. There all the parameters are fixed and of order $O(1)$, except that $d$ can be sufficiently small. In the  situation as $d \to 0$ the wave speed $c$ tends to infinity and $dc^2$ approaches to a positive number, which depends {only} on $\beta$ if $f(\xi) =\xi(\xi-\beta)(1-\xi)$ with $\beta \in (0,1/2)$. This is a case that {the} $\Gamma$-limit {of ${\cal I}_{c,d}$} does not exist when $d \to 0$. It is interesting to {investigate} if the tool of 
$\Gamma$-convergence can be utilized to {study traveling waves;  
this will, for the first time, expand its scope of applicability to non-stationary problems. In so doing we require} other parameters {to} change in some coordinate fashion with $d$.}  

 \begin{remark} \label{remark_sec3}
 The proofs given in \cite{CCH,CC2, CC3} used a different constraint $\|w' \|_{L^2_e}=\sqrt{2}$; 
it will be seen that 
imposing $\|w \|_{L^2_e}=1$ in $Y$ 
 facilitates {easier $\Gamma$-convergence analysis.} 
\end{remark}


{The $\Gamma$-limit of ${\cal I}_{c,d}$ and its minimizer will be studied in the later sections. The next two lemmas enable us to 
recover the traveling wave solutions 
 from the minimizer of the limiting functional of ${\cal I}_{c,d}$.} 

\begin{lemma} \label{lem_minimum}
{Let $\gamma$, $\sigma$, $\alpha$ and $\epsilon_1$ be given. If $c,d>0$ and $\epsilon \in(0,\epsilon_1)$ then $\inf _{w \in Y} {\cal I}_{c,d}(w)$ is,
{uniformly in $\epsilon$,
bounded from below. }
Suppose, in addition,
$\inf _{w \in Y} {\cal I}_{c,d}(w) \leq 0$, 
then
there is a minimizer $u \in Y$.}
\end{lemma}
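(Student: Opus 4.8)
The plan is to establish the two claims separately: first a uniform lower bound on the infimum, and then existence of a minimizer via the direct method once the infimum is known to be nonpositive.

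For the lower bound, the strategy is to control each of the three energy terms in \eqref{jc} from below. The gradient energy $\frac{dc^2}{2}\int e^x w'^2$ and the nonlocal energy $\frac{\epsilon\sigma}{2}\int e^x w\,{\cal L}_c w$ are both manifestly nonnegative (the latter by the self-adjointness and positivity of ${\cal L}_c$ remarked on just before the lemma), so the only term that can be negative is the $F$-integral $\int e^x F_\epsilon(w)\,dx$. Since $w$ ranges over the admissible set $Y$, we have the pointwise bounds $-\widetilde M_1-1\le w\le\beta_2$, so $F_\epsilon(w)$ is bounded below by some constant $-C$ depending only on $\widetilde M_1,\beta_2$ and the structure of $f_\epsilon$ (which in turn depend only on $\gamma,\alpha$ and the range $\epsilon\in(0,\epsilon_1)$). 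The subtlety is that $\int e^x(-C)\,dx$ diverges, so I cannot simply integrate a constant bound against $e^x\,dx$. Instead I would use the constraint $\int e^x w^2\,dx=1$ together with the explicit form of $F_\epsilon$: writing $F_\epsilon(w)=-\int_0^w f_\epsilon$, and using that $f_\epsilon(\xi)=-\xi(\xi-\beta_\epsilon)(\xi-1)$ is a cubic, the primitive $F_\epsilon(w)$ is a quartic whose negative part is dominated by a multiple of $w^2$ on the bounded range $[-\widetilde M_1-1,\beta_2]$. Concretely, on a bounded interval one has $F_\epsilon(w)\ge -K w^2$ for a constant $K=K(\gamma,\alpha,\epsilon_1)$ independent of $\epsilon$, so that $\int e^x F_\epsilon(w)\,dx\ge -K\int e^x w^2\,dx=-K$, giving the uniform lower bound ${\cal I}_{c,d}\ge -K$. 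I expect this $w^2$-domination step to be the main obstacle, since it is what converts the divergent naive estimate into a finite bound using the $L^2_e$-normalization.

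For the existence of a minimizer under the extra hypothesis $\inf_{w\in Y}{\cal I}_{c,d}\le 0$, I would run the direct method in $H^1_e$. Take a minimizing sequence $\{u_k\}\subset Y$. The uniform pointwise bounds $-\widetilde M_1-1\le u_k\le\beta_2$ together with the normalization $\|u_k\|_{L^2_e}=1$ and the boundedness of ${\cal I}_{c,d}(u_k)$ give a uniform bound on the gradient energy $\frac{dc^2}{2}\int e^x u_k'^2$, since all other terms are controlled (nonlocal energy nonnegative, $F$-integral bounded below by the previous part). Hence $\{u_k\}$ is bounded in $H^1_e$, and after passing to a subsequence $u_k\rightharpoonup u$ weakly in $H^1_e$ and $u_k\to u$ in $L^2_{e,loc}$. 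Weak lower semicontinuity of the convex gradient term and the pointwise constraints (which pass to the limit) place $u$ in $Y$, provided the normalization $\int e^x u^2=1$ survives the limit. The potential term $\int e^x F_\epsilon(u_k)$ passes to the limit by dominated convergence on compact sets together with the uniform bounds, and the nonlocal term is weakly continuous because ${\cal L}_c$ is a bounded self-adjoint smoothing operator. Thus ${\cal I}_{c,d}(u)\le\liminf{\cal I}_{c,d}(u_k)=\inf_Y{\cal I}_{c,d}$, so $u$ is the desired minimizer.

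The delicate point in the existence argument is ensuring no loss of $L^2_e$-mass at infinity, i.e. that the constraint $\int e^x u^2\,dx=1$ is preserved in the limit rather than dropping to a value less than $1$. Because the weight $e^x$ decays as $x\to-\infty$ and grows as $x\to+\infty$, mass can in principle escape toward $+\infty$. I would rule this out using Lemma \ref{lem_shift}(v), which gives the pointwise decay $|u(x)|e^x\le 2\|Du\|_e(\Omega)$, controlling the tail as $x\to+\infty$, while the finiteness of $\int e^x u^2$ and the bounded gradient energy control concentration; combined with the local $L^2_e$ convergence this yields $\int e^x u^2=1$ in the limit. Once the constraint and the pointwise bounds are verified, $u\in Y$ and the lower semicontinuity estimate completes the proof.
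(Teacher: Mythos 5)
The first half of your argument (the uniform lower bound via $F_\epsilon(\xi)\ge -K\xi^2$ on the admissible range together with the constraint $\int e^x w^2\,dx=1$) is exactly the paper's proof, and it is correct. The gap is in the existence part: your claim that the normalization $\int e^x u^2\,dx=1$ survives the weak limit is unjustified, and the tool you invoke cannot deliver it. Lemma~\ref{lem_shift}(v) gives $|u(x)|e^x\le 2\|Du\|_e(\Omega)$ only in terms of the weighted \emph{BV} seminorm $\int e^x|u'|\,dx$, which is not controlled by the gradient energy $\int e^x u'^2\,dx$ (Cauchy--Schwarz against $\int e^x\,dx$ diverges); the $H^1_e$ analogue only gives the non-summable bound $u^2(x)e^x\le C$, which says nothing about $\int_R^\infty e^x u^2\,dx$ uniformly along the sequence. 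In fact the constraint genuinely can fail in the limit: for any fixed $w\in Y$ the renormalized right-translates $w_k(x)=e^{-k/2}w(x-k)$ satisfy $\|w_k\|_{L^2_e}=1$, stay bounded in $H^1_e$ with the pointwise bounds intact, yet converge to $0$ weakly and in $L^2_{e,loc}$. So no soft compactness argument preserves the constraint, and your proof never actually uses the hypothesis $\inf_Y{\cal I}_{c,d}\le 0$, which is where the statement would become false without it.

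The paper's route accepts the possible loss of mass and repairs it afterwards using the scaling behavior of the weighted functional under translation. One first rules out $W\equiv 0$ for the weak limit $W$ of the minimizing sequence: by the Poincar\'e inequality $\int e^x w'^2\,dx\ge\frac14\int e^x w^2\,dx$ the gradient term alone contributes at least $\frac{dc^2}{8}\int e^x w_n^2\,dx=\frac{dc^2}{8}>0$, which contradicts $\liminf{\cal I}_{c,d}(w_n)\le 0$ if the remaining terms tend to $0$. Then, with $0<\int e^x W^2\,dx\le 1$, one chooses $a\ge 0$ so that $e^a\int e^x W^2\,dx=1$ and sets $u=W(\cdot-a)$; the identity ${\cal I}_{c,d}(u)=e^a{\cal I}_{c,d}(W)$ together with ${\cal I}_{c,d}(W)\le 0$ and $e^a\ge1$ gives ${\cal I}_{c,d}(u)\le{\cal I}_{c,d}(W)\le\inf_Y{\cal I}_{c,d}$, so $u\in Y$ is a minimizer. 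This translation step is the missing idea in your proposal; without it (or a full concentration--compactness analysis showing a minimizing sequence cannot send weighted mass to $+\infty$), the existence claim is not established.
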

\begin{proof}
{There exists a constant $M_2>0$
such that $-M_2 \xi^2 \leq {F_\epsilon(\xi)}$ for all $\xi \in {\mathbb R}$. 
For $w \in Y$,}
\[
{\cal I}_{c,d}(w) \geq \int_{-\infty}^\infty e^x F_\epsilon(w) \,dx \geq -M_2 \int_{-\infty}^\infty e^x w^2 \,dx = -M_2, 
\]
which shows $\inf _{w \in Y} {\cal I}_{c,d}(w)$ is, {uniformly in $\epsilon$,} bounded from below. Taking 
 a minimizing sequence $\{ w_n \} \subset Y$ with ${\cal I}_{c,d}(w_n) \leq  \inf_{w \in Y} {\cal I}_{c,d}(w) +1$, {then}
\begin{eqnarray*}
\frac{dc^2}{2} \int_{-\infty}^\infty e^x w'_n{^{2}}\, dx & \leq & {\cal I}_{c,d}(w_n) - \int_{-\infty}^\infty e^x F_\epsilon(w_n) \, dx \\
& \leq &  \inf_{w \in Y} {\cal I}_{c,d}(w) +1 + M_2 \;.
\end{eqnarray*}
Recall a Poincare type inequality for $w \in H^1_e$:
 \begin{eqnarray}
 \int_{\mathbb R} e^x w'^{\,2} \, dx \geq \frac{1}{4} \int_{\mathbb R} e^x w^2 \, dx.
\end{eqnarray}
This gives a uniform bound for $\| w_n \|_{H^1_e}$ {for all $n$}. Along a subsequence there is a $W \in H^1_e$ such that 
$w_n \rightharpoonup  W$ weakly in $H^1_e$ and strongly in $L^{\infty}_{loc}(\mathbb{R}) \cap L^2_{loc}({\mathbb R})$.
As in the proof of Lemma 4.2 in \cite{CC2}, we obtain $-\widetilde{M_1}-1 \leq W \leq \beta_2$, 
\begin{eqnarray*}
\int_{\mathbb R}e^x W'^{\,2}dx & \leq & \lim\inf \int_{\mathbb R} e^x w_n'^{\,2} dx, \\
\int_{\mathbb R} e^x F_\epsilon(W) dx & \leq & \lim \inf \int_{\mathbb R} e^x F_\epsilon(w_n) dx, \\
\int_{\mathbb R} e^x W {\cal L}_c W \, dx  &\leq & \lim \inf \int_{\mathbb R} e^x w_n {\cal L}_c w_n \, dx \;,
\end{eqnarray*}
and thus ${\cal I}_{c,d}(W) \leq {\lim\inf}_{n \to \infty} {\cal I}_{c,d}(w_n)$. 
  Moreover 
 $\int_{\mathbb R} e^x W^2 \, dx \leq 1$, since $\int_{-l}^l e^x W^2 \, dx = \lim_{n \to \infty} \int_{-l}^l e^x w_n^2 \, dx \leq 1$ for any $l>0$.
 
Suppose 
$\inf_{w \in Y} {\cal I}_{c,d}(w) \leq 0$ then ${\cal I}_{c,d}(W) \leq 0$.
We claim $W \not \equiv 0$; 
for otherwise
\begin{eqnarray*}
0& \geq & \lim_{n\to \infty} \inf {\cal I}_{c,d}(w_n)\\
&\geq & \frac{d c^2}{8} \int_{\mathbb R} e^x w_n^2 \, dx + \lim_{n\to \infty} \inf
\int_{\mathbb R} e^{x} \{  F_\epsilon(w_n) + \frac{1}{2}w_n  \, {\cal L}_{c} w_n \} \, dx \\
&\geq& \frac{d c^2}{8}+ \int_{\mathbb R} e^{x} \{ F_\epsilon(W) + \frac{1}{2} W \, {\cal L}_{c} W \} \, dx \\
&=& \frac{d c^2}{8} \;,
\end{eqnarray*}
which is absurd.
Consequently 
$1 \geq \int_{\mathbb R} e^{x} W^2 \,dx >0$. \\ 

Take $a \geq 0$ such that $e^a \int_{\mathbb R} e^x W^2 \,dx=1$.  
Letting $u(x) \equiv W(x-a)$ gives 
$u \in Y$ and 
\[
{\cal I}_{c,d}(u) = e^a {\cal I}_{c,d}(W) \leq {\cal I}_{c,d}(W) \leq \inf_{w \in Y} {\cal I}_{c,d}(w) \leq {\cal I}_{c,d}(u) \;.
\]
Hence $u$ is a minimizer of ${\cal I}_{c,d}$. 
 In case $\inf_{w \in Y} {\cal I}_{c,d}(w)<0$ then $a=0$ and $u=W$.
\end{proof}

\begin{lemma} \label{lem_soln}
Suppose 
$u \in Y$ is a minimizer of ${\cal I}_{c,d}$ 
and ${\cal I}_{c,d}(u)=0$, then  
{$(u,{\cal L}_c u)$} is a traveling wave solution of (\ref{dc1})-(\ref{dc2}) with $c$ as {its} wave speed.
\end{lemma}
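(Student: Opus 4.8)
The plan is to show that the constrained minimizer $u$ actually satisfies the \emph{unconstrained} Euler--Lagrange equation, i.e.\ that the Lagrange multiplier attached to the normalization $\|w\|_{L^2_e}^2=1$ vanishes and that the pointwise obstacle $-\widetilde{M_1}-1\le w\le\beta_2$ is inactive. Once this is accomplished, the first variation of ${\cal I}_{c,d}$ yields exactly (\ref{dc1}) with $v={\cal L}_c u$, while (\ref{dc2}) holds by the very definition of $v={\cal L}_c u$; hence $(u,{\cal L}_c u)$ is a traveling wave of speed $c$.

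First I would check that the obstacle is not active, namely $-\widetilde{M_1}-1<u<\beta_2$ a.e. This is where the choice of $\beta_2>1$ and of $\widetilde{M_1}$ with $f_\epsilon(-\widetilde{M_1})\ge\beta_2/\gamma$ enters: comparing $u$ against the constant barriers and using the sign of $f_\epsilon$ outside $[0,1]$ together with the bound on $v={\cal L}_c u$, a maximum-principle argument (as in the proof of Lemma~4.2 in \cite{CC2}) rules out contact with the barriers. With the obstacle inactive, the minimization is constrained only by $G(w):=\|w\|_{L^2_e}^2-1=0$, whose derivative $G'(u)[\phi]=2\int e^x u\phi\,dx$ is nonzero since $u\not\equiv0$. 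The Lagrange multiplier rule then provides a $\lambda\in\mathbb R$ with
\[
{\cal I}_{c,d}'(u)[\phi]=\int e^x\phi\big(-dc^2(u''+u')-f_\epsilon(u)+\epsilon\sigma{\cal L}_c u\big)\,dx=2\lambda\int e^x u\phi\,dx
\]
for all admissible $\phi$, that is $-dc^2(u''+u')-f_\epsilon(u)+\epsilon\sigma{\cal L}_c u=\lambda u$ a.e., after integrating by parts with $(e^x u')'=e^x(u''+u')$ and using the self-adjointness of ${\cal L}_c$.

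The crux is to show $\lambda=0$, and here I would exploit the translation--dilation structure of the weighted functional. A change of variables, together with the translation equivariance of the constant-coefficient operator ${\cal L}_c$, gives the identity ${\cal I}_{c,d}(u(\cdot-a))=e^{a}{\cal I}_{c,d}(u)$ for every $a\in\mathbb R$; differentiating at $a=0$ and using $\frac{d}{da}u(\cdot-a)\big|_{a=0}=-u'$ yields ${\cal I}_{c,d}'(u)[-u']={\cal I}_{c,d}(u)=0$ by hypothesis. Plugging $\phi=-u'$ into the Lagrange identity gives $0=-2\lambda\int e^x uu'\,dx=-\lambda\int e^x(u^2)'\,dx$, and integrating by parts, $\int e^x(u^2)'\,dx=-\int e^x u^2\,dx=-1$, the boundary terms vanishing because $g:=e^x u^2$ satisfies $g,g'\in L^1(\mathbb R)$, so $g\in W^{1,1}(\mathbb R)$ has limit $0$ at $\pm\infty$. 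Hence $\lambda=0$.

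With $\lambda=0$ the Euler--Lagrange equation is precisely $dc^2(u''+u')+f_\epsilon(u)-\epsilon\sigma{\cal L}_c u=0$, i.e.\ (\ref{dc1}) with $v={\cal L}_c u$; since $v={\cal L}_c u$ solves (\ref{dc2}) by definition, $(u,{\cal L}_c u)$ solves (\ref{dc1})--(\ref{dc2}) with wave speed $c$. The main obstacle I anticipate is the rigorous justification of the $\lambda=0$ step: one must secure enough regularity of $u$ (by elliptic bootstrapping from the equation) so that $-u'\in H^1_e$ is an admissible variation, and confirm the decay needed to discard the boundary terms when testing against $-u'$. The inactivity of the obstacle, though by now classical, likewise has to be settled before the Lagrange rule can be invoked in its unconstrained form.
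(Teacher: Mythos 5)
Your proposal is correct and follows essentially the same route as the paper, whose own proof is only a sketch deferring to \cite{CC2,CCH}: first show the obstacle $-\widetilde{M_1}-1<u<\beta_2$ is inactive, then observe that ${\cal I}_{c,d}(u)=0$ eliminates the Lagrange multiplier of the constraint $\|u\|_{L^2_e}=1$. Your identity ${\cal I}_{c,d}(u(\cdot-a))=e^{a}{\cal I}_{c,d}(u)$, differentiated at $a=0$ and tested against $\phi=-u'$, is precisely the ``slight modification'' the paper alludes to for killing the multiplier, and your flagged caveats (regularity of $u$ so that $-u'$ is admissible, decay of $e^xu^2$ at $\pm\infty$) are the right ones.
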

\begin{proof}
The first step is to show $-\widetilde{M_1}-1<u<\beta_2$, using the arguments of \cite{CCH,CC2}. 
{Since ${\cal I}_{c,d}(u)=0$, 
we may slightly modify the proofs given in 
\cite{CC2,CCH} 
 to get rid of 
 the Lagrange multiplier associated with the constraint $\int_{\mathbb R} e^x w^2 \,dx=1$. 
This ensures that $(u,{\cal L}_c u)$ is a traveling wave solution, because $u$ acts like an unconstrained critical point of ${\cal I}_{c,d}$.}
\end{proof}




\section{$\Gamma$-convergence} \label{conv}
\setcounter{equation}{0}
\renewcommand{\theequation}{\thesection.\arabic{equation}}
To investigate the
 $\Gamma$-convergence for 
 the traveling wave {functional}, we rewrite \eqref{dc1}-\eqref{dc2} in the following form:
  \begin{eqnarray}
\epsilon^2 u_{xx} + \epsilon^2 u_x  +{f_\epsilon}(u)- \epsilon \sigma v& = &0 \;, \label{FNTW1} \\ 
c^2 v_{xx} +c^2 v_x  -\gamma v +u &=&0 \; ; \label{FNTW2}
\end{eqnarray}
that is, set $\epsilon = \sqrt {dc^2} \,$. 
 It is required that $d$ be related to $\epsilon$ in some suitable fashion to induce an interesting geometric variational problem to be 
 the $\Gamma$-limit. Thus at least one of $c,d$ is 
depending on $\epsilon$ when we consider a sequence along $\epsilon \to 0$. \\

Note that $F_\epsilon = F_0+ {\alpha \epsilon}G $, 
where $F_0(u) \equiv \frac{1}{4} u^2 (u-1)^2 $ and $G(u) \equiv \frac{1}{\sqrt{2}}( \frac{u^3}{3}  -\frac{u^2}{2})$. 
In the decomposition of $F_\epsilon$, $F_0$ is a balanced bistable nonlinearity and $ F_0(0)=F_0(1)=\min F_0 =0$.
$G$ has a local maximum at $0$ and a local minimum at $1$ with $G(0)=0$ and
$G(1)=-1/6 \sqrt{2}$. Their combined $F_\epsilon$ has a local maximum 
at $\beta_\epsilon$ and $F_\epsilon(\beta_\epsilon)>0$.
 ${F_\epsilon}(0)=0$ is a local minimum and $F_\epsilon(1)=-\frac{1-2\beta_\epsilon}{12}=-\frac{1}{6\sqrt{2}} \alpha \epsilon$ is the global minimum. \\
 
To evaluate the $\Gamma$-limit of the functional {${\cal I}_{c,d}/\epsilon$} in the topology $L^2_e$ as $\epsilon \to 0$, we set $J_{c(\epsilon)} \equiv {\cal I}_{c,d}/\epsilon$; that is,
\begin{eqnarray}
J_{c(\epsilon)}(w)
&=&  \int_{-\infty}^\infty e^x \{  \frac{\epsilon w'^{\,2}}{2} + \frac{F_0(w)}{\epsilon} + {\alpha} G(w) 
+ \frac{\sigma}{2} w {\cal L}_{c(\epsilon)} w \}\,dx , \label{J1}
\end{eqnarray}
for $w \in Y$. In \eqref{J1}, $c$ is not {necessarily} a constant but a function of $\epsilon$ with the property that $c(\epsilon) \to c_0$ 
{for some positive constant $c_0$} as $\epsilon \to 0$. If $u$ 
 is a minimizer of $J_{c(\epsilon)}$ in $Y$ and 
\begin{equation} \label{J0}
J_{c(\epsilon)}(u)=0 \; 
\end{equation}
then by Lemma~
\ref{lem_soln}, we obtain a solution of \eqref{FNTW1}-\eqref{FNTW2} by setting $v={\cal L}_{c(\epsilon)} u$. 
Our goal aims {for} traveling wave with speed 
  close to $c_0$. A traveling wave solution of \eqref{FNTW1}-\eqref{FNTW2} will be denoted by $(c_\epsilon,u_\epsilon,v_\epsilon)$, where $c_\epsilon$ is the wave speed and $d=\epsilon^2/c^2_\epsilon$. \\

Next we 
 examine the $\Gamma$-convergence of $J_{c(\epsilon)}$ as $\epsilon \to 0$. As an application of the {limiting functional later on},  
 we prove the existence of 
  traveling wave solutions {of the original problems by seeking} suitable conditions 
 on the parameters $\alpha$, {$\gamma$} and $\sigma$. 
 Let 
  $\phi(\xi)=\int_0^\xi \sqrt{2 F_0(\eta)} \,d\eta$. This function has been frequently used \cite{KS,M} in the calculation of 
 phase transition problems. 
Although {these latter problems bear} certain similarity to 
 our study, additional complexities arise
due to the treatment for 
 unbounded domains,  the presence of a nonlocal term, and {the appearance of 
  the weight
$e^x$ in dealing with traveling waves instead of stationary solutions.} As a remark, $\phi$ is a 
 strictly increasing function, 
 $\phi(0)=0$ and $\phi(1)=\frac{\sqrt{2}}{12}$.  \\ 

{In what follows, when we say a sequence converges, it might be passing 
 through 
 a 
  subsequence without further comment. 
Denoted by $C_k, k=0,1,2, 
\cdots,$} a positive constant 
 not depending on $\epsilon$. 
For instance, let 
$C_1 \equiv \frac{1}{\sqrt{2}} (\frac{1+ \widetilde{M_1}}{3}+\frac{1}{2})$ in the following compactness lemma. 

\begin{lemma} (compactness) \label{lem_compact} 
Let $\{ w_\epsilon \} \subset Y$ such that $\lim \inf_{\epsilon \to 0} J_{c(\epsilon)}(w_\epsilon) 
 \leq C_0$. 
Then \\
(i) $\phi(w_\epsilon) \in BV_e$; \\ 
(ii) there exists a subsequence, still denoted 
 by $\{ w_\epsilon \}$, and a characteristic function
$\chi_E \in BV_e \cap L^2_e$ such that 
$w_\epsilon \to \chi_E$ in $L^2_e$ and $\| D \chi_E \|_e (\Omega) \leq 6 \sqrt{2} (C_0+C_1 \alpha)$;\\
(iii) $\int_{-\infty}^\infty e^x \chi_E =1$ 
 with $E \subset (-\infty, \log 6 \sqrt{2} (C_0+C_1 \alpha) \,]$.
\end{lemma}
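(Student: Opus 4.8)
The plan is to treat $J_{c(\epsilon)}$ as a weighted Modica--Mortola functional and read off the compactness from a uniform bound on the weighted variation of $\phi(w_\epsilon)$. First I would record the Young inequality $\sqrt{2F_0(\xi)}\,|\xi'|\le \frac{\epsilon}{2}|\xi'|^2+\frac{1}{\epsilon}F_0(\xi)$, which, via the chain rule $\big(\phi(w_\epsilon)\big)'=\sqrt{2F_0(w_\epsilon)}\,w_\epsilon'$, gives after multiplying by $e^x$ and integrating
\[
\|D\phi(w_\epsilon)\|_e(\Omega)=\int_{-\infty}^\infty e^x\sqrt{2F_0(w_\epsilon)}\,|w_\epsilon'|\,dx\le \int_{-\infty}^\infty e^x\Big(\tfrac{\epsilon}{2}w_\epsilon'^{\,2}+\tfrac{F_0(w_\epsilon)}{\epsilon}\Big)\,dx.
\]
To bound the right side by $J_{c(\epsilon)}(w_\epsilon)$ I would drop the nonlocal term (it is $\ge0$) and absorb the $G$-term through $|G(\xi)|\le C_1\xi^2$ on $[-\widetilde{M_1}-1,\beta_2]$ — this is precisely where $C_1$ enters — together with $\int e^x w_\epsilon^2\,dx=1$, obtaining $\|D\phi(w_\epsilon)\|_e(\Omega)\le J_{c(\epsilon)}(w_\epsilon)+C_1\alpha$. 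Since $|\phi(\xi)|\le C\xi^2$ on the admissible range, $\phi(w_\epsilon)\in L^1_e$, which gives (i); along the subsequence realizing $\liminf J_{c(\epsilon)}(w_\epsilon)\le C_0$ this yields the uniform bound $\|D\phi(w_\epsilon)\|_e(\Omega)\le C_0+C_1\alpha+o(1)$.

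For (ii) I would extract a limit. The uniform $BV_e$ bound restricts to a uniform ordinary $BV$ bound on each $(-l,l)$, so by compact embedding and a diagonal argument $\phi(w_\epsilon)\to g$ in $L^1_{loc}$ and a.e.; continuity of $\phi^{-1}$ on the compact range gives $w_\epsilon\to w_0:=\phi^{-1}(g)$ a.e. The potential term forces $\int e^x F_0(w_\epsilon)\,dx\le \epsilon(C_0+C_1\alpha+o(1))\to0$, whence $F_0(w_0)=0$ a.e. and $w_0=\chi_E$ with $E=\{w_0=1\}$. Because $\phi(0)=0$ and $\phi(1)=\frac{\sqrt2}{12}$ give $g=\phi(\chi_E)=\frac{\sqrt2}{12}\chi_E$, lower semicontinuity (Lemma~\ref{lem_semi}) shows $\chi_E\in BV_e$ with $\|D\chi_E\|_e=\frac{1}{\phi(1)}\|Dg\|_e=6\sqrt2\,\|Dg\|_e\le 6\sqrt2(C_0+C_1\alpha)$. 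For the upper bound on $E$ in (iii) I would invoke Lemma~\ref{lem_converse}: writing $E=\cup_i[a_i,b_i]$, each $e^{b_i}\le\|D\chi_E\|_e\le 6\sqrt2(C_0+C_1\alpha)$, so $E\subset(-\infty,\log(6\sqrt2(C_0+C_1\alpha))\,]$.

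The main obstacle is upgrading a.e.\ convergence to $L^2_e$ convergence, equivalently proving $\int e^x\chi_E\,dx=1$, i.e.\ ruling out loss of mass at $+\infty$, where the weight $e^x$ is large. The left tail is harmless since $\int_{-\infty}^{-l}e^x w_\epsilon^2\,dx\le(\widetilde{M_1}+2)^2e^{-l}$, and on $(-l,l)$ dominated convergence applies. For the right tail I would combine two facts: (a) the pointwise decay $|\phi(w_\epsilon)(x)|\,e^x\le 2\|D\phi(w_\epsilon)\|_e(\Omega)$ from Lemma~\ref{lem_shift}(v), and (b) the coercivity $F_0(\xi)\ge\frac{1}{16}\xi^2$ for $\xi\le\frac12$ (valid for all signs, since then $(\xi-1)^2\ge\frac14$). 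By (a), on $\{w_\epsilon>\frac12\}$ one has $\phi(\tfrac12)e^x\le 2\|D\phi(w_\epsilon)\|_e(\Omega)$, forcing $w_\epsilon\le\frac12$ for every $x\ge l_0:=\log\!\big(2\|D\phi(w_\epsilon)\|_e(\Omega)/\phi(\tfrac12)\big)$; then (b) gives $\int_l^\infty e^x w_\epsilon^2\,dx\le 16\int_{\mathbb R}e^x F_0(w_\epsilon)\,dx\to0$ for $l\ge l_0$. Letting $\epsilon\to0$ and then $l\to\infty$ yields $w_\epsilon\to\chi_E$ in $L^2_e$, so $\int e^x\chi_E\,dx=\lim\int e^x w_\epsilon^2\,dx=1$. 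I expect the coupling of (a) with (b) — the quantitative statement that any region where $w_\epsilon$ is near $1$ must stay bounded to the right — to be the crux, since the pointwise bound alone controls only the integrand, not the tail integral.
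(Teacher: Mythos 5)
Your proposal is correct and follows essentially the same route as the paper's proof: the Modica--Mortola bound on $\|D\phi(w_\epsilon)\|_e$ after discarding the nonlocal term and absorbing the $G$-integral, local $BV$ compactness plus a diagonal argument, Fatou on the $F_0$-integral to force $w_0=\chi_E$, lower semicontinuity for the variation bound, and crucially the same coupling of the pointwise decay $e^x|\phi(w_\epsilon)|\le 2\|D\phi(w_\epsilon)\|_e$ with the coercivity $F_0(\xi)\ge\tfrac{1}{16}\xi^2$ for $\xi\le\tfrac12$ to control the right tail and prevent loss of mass. The only cosmetic differences are that you bound $|G|$ by $C_1\xi^2$ on the admissible range where the paper interpolates the $L^3_e$ norm, and you verify $L^2_e$ convergence directly to $\chi_E$ where the paper runs a Cauchy-sequence argument; both are equivalent.
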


%
\begin{proof}
For $\{ w_\epsilon \} \subset Y$, we know $\| w_\epsilon \|_{L^\infty} \leq 1+ \widetilde{M_1}$ and     
$\| w_\epsilon \|_{L^p_e} \leq \| w_\epsilon \|_{L^2_e}^{2/p} \; \| w_\epsilon \|_{L^{\infty}}^{(p-2)/p} \leq (1+ \widetilde{M_1})^{(p-2)/p}$ if $p>2$. 
Since $w_\epsilon \in H^1_e$ 
 and the nonlocal energy is non-negative,
\begin{eqnarray}
\| D (\phi(w_\epsilon)) \|_e (\Omega) 
&= &  \int_{-\infty}^\infty e^x |\sqrt{2 F_0(w_\epsilon)}| \; | w'_\epsilon | \, dx \nonumber \\
& \leq & \int_{-\infty}^\infty e^x (\epsilon \frac{w'^{\,2}_\epsilon}{2} +\frac{F_0(w_\epsilon)}{\epsilon}) \, dx \nonumber \\
& \leq & J_{c(\epsilon)} (w_\epsilon) +  \alpha \int_{-\infty}^{\infty} e^x |G(w_\epsilon)| \, dx \nonumber \\
& \leq & C_0+ \frac{\alpha}{\sqrt{2}} (\frac{1}{3} \int_{-\infty}^{\infty} e^x |w_\epsilon|^3 \, dx
 + \frac{1}{2} \int_{-\infty}^\infty e^x |w_\epsilon|^2 \, dx ) + {o(1)} \nonumber \\
 & \leq & C_0+\frac{\alpha}{\sqrt{2}} (\frac{1+ \widetilde{M_1}}{3}+\frac{1}{2}) + o(1)= C_0+C_1 \alpha + {o(1)}\;, \label{dphi}
\end{eqnarray}
which shows $\phi(w_\epsilon) \in BV_e$ with a uniform bound in norm. On each finite interval $[-k,k]$, 
applying the compactness 
theorem for bounded variation functions on finite domains, we obtain 
 a function $\Phi_0 \in L^1_e(-k,k)$ such that along a subsequence $\phi(w_\epsilon) \to \Phi_0$ in $L^1_e(-k,k)$ and pointwise a.e.. Then using 
a diagonal process, we conclude that $\Phi_0 \in L^1_{e, \,loc}({\mathbb R})$ and through a subsequence 
$\phi(w_\epsilon) \to \Phi_0$ in $L^1_{e,\,loc}$ and pointwise a.e.. 
By Lemma~\ref{lem_semi} and Lemma~\ref{lem_shift}, we know $\Phi_0 \in BV_e$ and 
\[
\| \Phi_0 \|_{L^1_e} \leq \| D \Phi_0 \|_e (\Omega) \leq \lim \inf_{\epsilon \to 0} \| D \phi(w_\epsilon) \|_e (\Omega) \leq C_0+C_1 \alpha. 
\]
Since $\phi$ is a strictly increasing function, setting $w_0 \equiv \phi^{-1}(\Phi_0)$ yields $w_\epsilon \to w_0
$ pointwise a.e. and thus 
$-\widetilde{M_1}-1 \leq {w_0} \leq \beta_2$ a.e..
Observe that 
\begin{equation} \label{F0}
 \int_{-\infty}^\infty e^x F_0(w_\epsilon)\, dx 
\leq   \epsilon \{ J_{c(\epsilon)} (w_\epsilon) +  \alpha \int_{-\infty}^\infty e^x 
|G(w_\epsilon) | \, dx \,  \}
\leq  \epsilon (C_0+C_1 \alpha + o(1)) \;.
\end{equation}
Applying Fatou's lemma gives 
\begin{eqnarray*}
0 \leq \int_{-\infty}^\infty e^x F_0(w_0) \,dx &\leq  & \lim \inf_{\epsilon \to 0} \int_{-\infty}^\infty e^x F_0(w_\epsilon)\, dx  \nonumber \\
& \leq & \lim \inf_{\epsilon \to 0} \epsilon (C_0+C_1 \alpha + o(1)) \nonumber \\
&=& 0 \;,
\end{eqnarray*}
 which implies $F_0(w_0)=0$ a.e.. This can be valid only if $w_0(x) \in \{ 0,1 \}$ 
  a.e.; in other words, $w_0=\chi_E$
for some set $E \subset {\mathbb R}$. As a consequence, $\phi(w_\epsilon) \to \Phi_{0}=\phi(w_0)= \phi(1) \chi_E$
in $L^1_{e,\,loc}$ and pointwise a.e.. 
Using Lemma~\ref{lem_semi} and Lemma~\ref{lem_shift} 
 again, we get 
\begin{equation} \label{bound1}
\phi(1) \| \chi_E \|_{L^1_e} \leq 
\phi(1) \| D \chi_E \|_e(\Omega) = \| D \phi(w_0) \|_e(\Omega) \leq \lim \inf \| D \phi(w_\epsilon) \|_e(\Omega) \leq 
C_0+C_1 \alpha. 
\end{equation}
Hence $\chi_E \in BV_e$ and $\| D \chi_E \|_e (\Omega) \leq \frac{C_0+C_1 \alpha}{\phi(1)}$, which shows 
$E \subset (-\infty,  \log \frac{C_0+C_1 \alpha}{\phi(1)} \,]$. \\ 

Invoking Lemma~\ref{lem_shift} yields $e^x |\phi(w_\epsilon(x))| \leq 2 \| D \phi(w_\epsilon) \|_e(\Omega) \leq 2 (C_0+C_1 \alpha {+o(1)})$ and 
thus $|\phi(w_\epsilon(x))| \leq 2 (C_0+C_1 \alpha {+1}) e^{-x}$. 
Since $\phi^{-1}$ is continuous and $\phi^{-1}(0)=0$, 
there exists a $y_0$ such that  
 $|w_\epsilon(x)| \leq 1/2$ if $x \geq y_0$. This together with (\ref{F0}) gives 
 \begin{eqnarray*}
 \int_{y_0}^\infty e^x w_\epsilon^2 \, dx \leq 4 \int_{y_0}^\infty e^x  w_\epsilon^2 (w_\epsilon -1)^2 \,dx
 \leq 16 \int_{-\infty}^\infty e^x F_0(w_\epsilon) \, dx \leq 16 \epsilon (C_0+C_1 \alpha {+o(1)}).
 \end{eqnarray*}

Next we 
 extract 
 a subsequence of $\{ w_\epsilon \}$ to form a Cauchy sequence in $L^2_e$. 
For any $\delta>0$, in view of 
\begin{eqnarray*}
&&\int_{-\infty}^{\infty} e^x |w_\epsilon  - w_\eta|^2 \, dx \\
& \leq  &\int_{-\infty}^{-y_\delta} e^x |w_\epsilon  - w_\eta|^2 \, dx 
+ \int_{-y_\delta}^{y_0} e^x |w_\epsilon  - w_\eta|^2 \, dx  + \int_{y_0}^{\infty} e^x |w_\epsilon  - w_\eta|^2 \, dx  \\
& \leq & 4 (1+ \widetilde{M_1})^2 \int_{-\infty}^{-y_\delta} e^x \,dx + \int_{-y_\delta}^{y_0} e^x |w_\epsilon  - w_\eta|^2 \, dx  
+ 2 \int_{y_0}^{\infty} e^x (w_\epsilon^2  + w_\eta^2 ) \, dx  \\
& \leq & 4 (1+ \widetilde{M_1})^2 e^{-y_\delta}  + \int_{-y_\delta}^{y_0} e^x |w_\epsilon  - w_\eta|^2 \, dx  
+ 64 (C_0+C_1 \alpha {+o(1)}) \, \max\{\epsilon,\eta\} \;,
\end{eqnarray*}
in the last line  
 the first term 
and the third term are smaller than $\delta$ if we pick 
 $y_\delta$ large and $\max \{\epsilon,\eta\}$ small enough. 
 With $w_\epsilon$ being
converging to $w_0$ pointwise a.e. and $\| w_\epsilon \|_{L^{\infty}} \leq 1+ \widetilde{M_1}$, the second term is also smaller than $\delta$ since 
the dominated convergence theorem implies that 
$w_\epsilon \to w_0$ in $L^2_e(-y_\delta,q_0)$.
Along this Cauchy subsequence, $\{ w_\epsilon \}$ 
 converges to a function 
  in $L^2_e$ and pointwise a.e. and consequently this limit function
   has to be $\chi_E$. Hence 
\[
\int_{-\infty}^{\infty} e^x \chi_E \, dx = \int_{-\infty}^\infty e^x \chi_E^2 \, dx = \lim \int_{-\infty}^\infty e^x w_\epsilon^2 \, dx
=1\;.
\]
The proof 
 is complete.
\end{proof}


To employ 
$\Gamma$-convergence 
 for studying the existence and qualitative behavior of  traveling wave solutions as $\epsilon \to 0$, 
  we extend the domain of $J_{c(\epsilon)}$ to $L^2_e$ by setting 
\begin{equation}\label{jce}
J_{c(\epsilon)}(w) 
= \left\{ \begin{array}{ll} \mbox{as (\ref{J1})},  & \mbox{if}\; w \in Y \;, \\
  \infty, & \mbox{if} \;  w \in L^2_e \setminus Y  \; .
  \end{array} \right.
  \end{equation}
Next we propose a possible candidate for the $\Gamma$-limit of $J_{c(\epsilon)}$ 
Let $E= \cup_i [a_i,b_i]$, a union of countably many disjoint intervals; 
here $b_1>a_1>b_2>a_2 > \dots$ and $a_i \to -\infty$, $b_i \to -\infty$ as $i \to \infty$. 
We introduce a functional 
 $J^*_{c}: L^2_e \to {\mathbb R}$ defined by 
\begin{eqnarray}
J^*_{c}(w) & \equiv &
\left\{ \begin{array}{l}
{\frac{\sqrt{2}}{12}} \sum_i (e^{a_i}+e^{b_i}) {- \frac{\sqrt{2} \, \alpha}{12}} \sum_i (e^{b_i}-e^{a_i}) 
+\frac{\sigma}{2} \int_{-\infty}^{\infty} e^x \chi_E {\cal L}_{c} \chi_E \, dx \;, \\
\hspace{1.5in} \mbox{if} \; w=\chi_E \in BV_e \; \mbox{and} \; \int_{-\infty}^\infty e^x \chi_E \,dx=1, \label{Jinfty}  \\
\infty, \hspace{1.25in} 
 \mbox{otherwise} \;. 
\end{array} \right.
\end{eqnarray}


\begin{remark} \label{remark3}
Since $ \sum (e^{b_i}-e^{a_i}) \leq  e^{b_1}<\infty$, if $J^*_{c}(\chi_E)< \infty$ then 
both the first and the third terms of $J^*_{c}(\chi_E)$ 
 are positive and bounded from above. 
\end{remark}

From (\ref{FNTW2}), it follows from integration by parts that 
\[
c^2(\epsilon) \| ({\cal L}_{c(\epsilon)} w)' \|_{L^2_e}^2 + \gamma \| {\cal L}_{c(\epsilon)} w \|^2_{L^2_e} = \int_{-\infty}^\infty e^x w \, {\cal L}_{c(\epsilon)} w \, dx \;.
\]
{Since $c(\epsilon) \to c_0$}, it follows that 
$ \| {\cal L}_{c(\epsilon)} w \|_{H^1_e} \leq C_3 \| w \|_{L^2_e}$ for some positive constant $C_3$, 
 depending  
on $c_0$ and $\gamma$ only. 
Suppose 
 $w_\epsilon \to w_0$ in $L^2_e$, it is easily verified that 
 ${\cal L}_{c(\epsilon)} w_\epsilon \to {\cal L}_{c_0} w_0$ in $H^1_e$ and 
$\int_{-\infty}^\infty e^x w_\epsilon {\cal L}_{c(\epsilon)} w_\epsilon \, dx  \to  \int_{-\infty}^\infty e^x w_0 {\cal L}_{c_0} w_0 \, dx$ as $\epsilon \to 0$.
Using this fact, we apply a well-known stability theorem \cite[proposition 2.3]{Braides2} to establish the liminf inequality {as follows.} 


\begin{lemma} (liminf inequality) \label{lem_liminf} 
If $w_0 \in L^2_e$ then  
\begin{equation} \label{liminf}
J^*_{c_0}(w_0) \leq {\lim \inf}_{\epsilon \to 0} J_{c(\epsilon)} (w_\epsilon) \;
\end{equation}
for any sequence $w_\epsilon \to w_0$ in $L^2_e$.
\end{lemma}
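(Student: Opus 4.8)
The plan is to split the functional \eqref{J1} into a Modica--Mortola type ``sharp-interface'' part and a perturbation that converges continuously along $L^2_e$-convergent sequences, prove a liminf estimate for the former, and then invoke the stability theorem \cite[Proposition 2.3]{Braides2}. First I would dispose of the trivial case: if $\liminf_{\epsilon\to0}J_{c(\epsilon)}(w_\epsilon)=\infty$ there is nothing to prove, so I pass to a subsequence realizing the liminf with finite values. By the extension \eqref{jce} this forces $w_\epsilon\in Y$ for all small $\epsilon$, so the compactness Lemma \ref{lem_compact} yields, along a further subsequence, $w_\epsilon\to\chi_E$ in $L^2_e$ with $\chi_E\in BV_e$ and $\int_{-\infty}^\infty e^x\chi_E\,dx=1$. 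Since by hypothesis $w_\epsilon\to w_0$ in $L^2_e$, the two limits coincide, so $w_0=\chi_E$ lies on the finite branch of $J^*_{c_0}$ in \eqref{Jinfty}, and by Lemma \ref{lem_converse} we write $E=\cup_i[a_i,b_i]$.

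For the gradient--potential part I would reuse the elementary bound already exploited in \eqref{dphi}, namely $\tfrac{\epsilon}{2}(w_\epsilon')^2+\tfrac1\epsilon F_0(w_\epsilon)\ge\sqrt{2F_0(w_\epsilon)}\,|w_\epsilon'|$, so that the integral dominates $\|D(\phi(w_\epsilon))\|_e(\Omega)$. Because $\phi(w_\epsilon)\to\phi(w_0)=\phi(1)\chi_E$ in $L^1_{e,\,loc}$, the weighted lower semicontinuity of Lemma \ref{lem_semi}, together with $\phi(1)=\tfrac{\sqrt2}{12}$ and the explicit total variation \eqref{Du}, gives
\[
\liminf_{\epsilon\to0}\int_{-\infty}^\infty e^x\Big(\tfrac{\epsilon}{2}(w_\epsilon')^2+\tfrac1\epsilon F_0(w_\epsilon)\Big)\,dx\ \ge\ \phi(1)\,\|D\chi_E\|_e(\Omega)=\frac{\sqrt2}{12}\sum_i(e^{a_i}+e^{b_i}),
\]
which is exactly the first term of $J^*_{c_0}(w_0)$.

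It then remains to show that the two other terms actually converge, so that they can be pulled out of the liminf (equivalently, that they form a continuously convergent perturbation). The nonlocal term converges by the continuity fact recorded just before the statement, giving $\tfrac\sigma2\int e^x w_\epsilon\mathcal L_{c(\epsilon)}w_\epsilon\,dx\to\tfrac\sigma2\int e^x\chi_E\mathcal L_{c_0}\chi_E\,dx$, the third term of $J^*_{c_0}(w_0)$. For the drift term $\alpha\int e^x G(w_\epsilon)\,dx$ I would split $\mathbb R=(-\infty,R]\cup[R,\infty)$: on the bounded piece the measure $e^x\,dx$ is finite and $w_\epsilon\to\chi_E$ pointwise with $\|w_\epsilon\|_{L^\infty}\le1+\widetilde{M_1}$, so dominated convergence applies; on the tail, taking $R\ge y_0$ as in Lemma \ref{lem_compact} so that $|w_\epsilon|\le\tfrac12$ there, the estimate $\int_{y_0}^\infty e^x w_\epsilon^2\,dx\le16\epsilon(C_0+C_1\alpha+o(1))$ from that proof bounds $\int_R^\infty e^x|G(w_\epsilon)|\,dx$ uniformly by a quantity tending to $0$. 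Since $G(1)=-\tfrac{\sqrt2}{12}$, an $\epsilon$--then--$R$ argument yields $\alpha\int e^x G(w_\epsilon)\,dx\to\alpha G(1)\int e^x\chi_E\,dx=-\tfrac{\sqrt2\,\alpha}{12}\sum_i(e^{b_i}-e^{a_i})$, the second term of $J^*_{c_0}(w_0)$. Assembling the three limits by superadditivity of the $\liminf$ (or, equivalently, applying \cite[Proposition 2.3]{Braides2} to the continuously convergent perturbation) gives \eqref{liminf}. I expect the main obstacle to be precisely the continuous convergence of the drift term over the unbounded weighted domain, where the uniform tail decay furnished by Lemma \ref{lem_compact} is what makes the argument go through.
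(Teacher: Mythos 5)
Your proposal is correct and follows essentially the same route as the paper: reduce to sequences in $Y$ with finite liminf, invoke the compactness Lemma \ref{lem_compact} to identify $w_0=\chi_E$, bound the gradient--$F_0$ part from below by $\phi(1)\|D\chi_E\|_e(\Omega)$ via \eqref{dphi} and the lower semicontinuity Lemma \ref{lem_semi}, and treat the $G$-term and the nonlocal term as continuously convergent perturbations. The only cosmetic difference is that you verify the convergence of $\alpha\int e^x G(w_\epsilon)\,dx$ by a domain-splitting/tail-estimate argument, whereas the paper simply notes that $L^2_e$-convergence plus the uniform $L^\infty$ bound yields $L^3_e$-convergence and then applies the stability theorem.
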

\begin{proof}
It suffices to treat the case that ${\lim \inf} J_{c(\epsilon)} (w_\epsilon) 
<\infty$ and $\{ w_\epsilon \} \subset Y$; 
otherwise there is nothing to prove. 
Then $\| w_\epsilon \|_{L^2_e}=1$ and $-\widetilde{M_1}-1 \leq w_\epsilon \leq \beta_2$. 
It follows from Lemma~\ref{lem_compact} and Lemma~\ref{lem_converse} 
 that $w_0=\chi_E \in BV_e \cap L^2_e$,
$\int_{-\infty}^\infty e^x \chi_E \, dx=1$,
and $E=\cup_{i=1}^\infty [a_i,b_i]$, a union of countably many disjoint intervals.  

As mentioned earlier, 
 the nonlocal term can be ignored in checking the $\Gamma$-convergence; 
the same is true for the term $\alpha \int_{-\infty}^\infty e^x G(u) \,dx$
by the 
 stability theorem \cite[proposition 2.3]{Braides2}, since 
$w_\epsilon \to \chi_E$ in $L^2_e$ and a uniform
$L^{\infty}$ norm bound on $w_\epsilon$ imply 
$w_\epsilon \to \chi_E$ in $L^3_e$ as well. As a consequence,
$$\alpha \int_{-\infty}^\infty e^x G(w_\epsilon) \,dx \to \alpha \int_{-\infty}^\infty e^x G(\chi_E) \,dx = \alpha G(1) \sum_i (e^{b_i}-e^{a_i})
{= -\frac{\sqrt{2} \, \alpha}{12} \sum_i (e^{b_i}-e^{a_i})}.$$
Thus it remains 
 to verify the liminf inequality for the gradient term and the $F_0$-integral only, 
  and this is well known
except for the weight function $e^x$ appeared in the formulation. 

Extracting useful calculations from (\ref{dphi}) and (\ref{bound1}), we arrive at 
\begin{eqnarray*} 
{ \frac{\sqrt{2}}{12}} \sum_i (e^{a_i}+e^{b_i}) & =& \phi(1) \| D \chi_E \|_e (\Omega) \\
&  \leq & \lim \inf \| D \phi (w_\epsilon) \|_e \\
& \leq  & \lim \inf \int_{-\infty}^\infty e^x ( \frac{\epsilon w_\epsilon'^{\,2}}{2} +\frac{F_0(w_\epsilon)}{\epsilon}) \, dx \;,
\end{eqnarray*}
from which the proof of (\ref{liminf}) {can be completed}. 
\end{proof}

In studying the limsup inequality, the first step in the proof is to construct auxiliary functions as in dealing with 
 phase transition problems \cite{Leoni}.
{Let $f_0(\xi)=-\xi (\xi-1/2) (\xi -1)$.}
Integrating the equation 
\begin{equation} \label{f0}
\epsilon^2 U_\epsilon''+f_0(U_\epsilon)=0
\end{equation}
once yields
$$\epsilon^2 U'^{\,2}_\epsilon/2 - F_0(U_\epsilon)= \mbox{constant}.$$ 
Assigning this 
 constant to be $\epsilon/2$ gives 
\begin{equation} \label{U1}
U'_\epsilon= \frac{\sqrt{\epsilon+2 F_0(U_\epsilon)}}{\epsilon}
\end{equation}
with initial condition $U_\epsilon(0)=0$. This function {$U_\epsilon(x)$} is 
 strictly increasing and
\[
\int_0^{U_\epsilon} \frac{\epsilon}{\sqrt{\epsilon+2F_0(s)}} \, ds = x \;.
\]
It is now immediate that $U_\epsilon(\rho_\epsilon)=1$ and 
\begin{equation} \label{xU}
\rho_\epsilon= \int_0^1 \frac{\epsilon}{\sqrt{\epsilon+2F_0(s)}} \, ds \leq \sqrt{\epsilon} \;.
\end{equation}
By the same token, 
a strictly decreasing function $\widetilde{U_\epsilon}$ 
satisfies 
\begin{equation} \label{U2}
\widetilde{U_\epsilon}'= -\frac{\sqrt{\epsilon+2 F_0(\widetilde{U_\epsilon})}}{\epsilon}\;,
\end{equation}
$\widetilde{U_\epsilon}(0)=1$ and $\widetilde{U_\epsilon}(\rho_\epsilon)=0$. \\ 

We now state limsup inequality: \\ 

For every $w_0 \in L^2_e$, there exists a sequence $\{ w_\epsilon \} \subset L^2_e$
such that $w_\epsilon \to w_0$ in $L^2_e$ and $\lim \sup J_{c(\epsilon)} (w_\epsilon) \leq J^*_{c_0}(w_0)$. \\

\noindent
By \eqref{Jinfty} it suffices to consider 
   $J^*_{c_0}(w_0)<\infty$. Then 
$w_0= \chi_E \in BV_e$ for some $E \subset {\mathbb R}$, $\int_{-\infty}^\infty e^x \chi_E \,dx=1$ and 
  it is clear that 
   $\{ w_\epsilon \} \subset Y$. 
\begin{lemma} (limsup inequality) \label{lem_limsup} 
For every $\chi_E \in BV_e$ with $\int_{-\infty}^{\infty} e^x \chi_E \, dx=1$, there exists a sequence $\{ w_\epsilon \} \subset Y$
such that $w_\epsilon \to \chi_E$ in $L^2_e$ and $\lim \sup_{\epsilon \to 0} J_{c(\epsilon)}(w_\epsilon) \leq J^*_{c_0}(\chi_E)$.
\end{lemma}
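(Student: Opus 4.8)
The plan is to produce an explicit recovery sequence by pasting the optimal transition profiles $U_\epsilon$ and $\widetilde{U_\epsilon}$ across the jump points of $\chi_E$, in the spirit of the local phase-transition construction. Because $E=\cup_i[a_i,b_i]$ may comprise infinitely many intervals accumulating at $-\infty$, I would not attack the general $E$ directly. Instead I would (a) prove the inequality when $E$ is a \emph{finite} union of intervals satisfying the mass constraint, and then (b) recover the general case by truncating $E$ to its first $N$ intervals, rescaling to restore the constraint, and diagonalizing in $N$. The finiteness of $\sum_i(e^{a_i}+e^{b_i})$ guarantees that the deleted tail contributes a vanishing amount to every term of $J^*_{c_0}$, which is what makes step (b) work.

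For step (a), fix a finite union $E=\cup_{i=1}^{N}[a_i,b_i]$ with $\int e^x\chi_E\,dx=1$ and take $\epsilon$ so small that $\rho_\epsilon\le\sqrt\epsilon$ is below every interval length and every gap among the $[a_i,b_i]$. I would set $w_\epsilon=0$ off $E$, let $w_\epsilon$ climb from $0$ to $1$ on $[a_i,a_i+\rho_\epsilon]$ via the translate $U_\epsilon(\cdot-a_i)$, remain equal to $1$ on the plateau $[a_i+\rho_\epsilon,b_i]$, and descend from $1$ to $0$ on $[b_i,b_i+\rho_\epsilon]$ via $\widetilde{U_\epsilon}(\cdot-b_i)$. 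The result is Lipschitz, hence in $H^1_e$, with range $[0,1]\subset[-\widetilde{M_1}-1,\beta_2]$. Since the plateaus carry no gradient-or-potential energy ($F_0(1)=0$), the gradient-plus-$F_0$ part of $J_{c(\epsilon)}$ lives on the transition layers, where $e^x=e^{a_i}(1+o(1))$ or $e^{b_i}(1+o(1))$. On each layer the equipartition relation $\tfrac{\epsilon}{2}(U_\epsilon')^2=\tfrac{F_0(U_\epsilon)}{\epsilon}+\tfrac12$ and the change of variables $s=U_\epsilon(x)$ reduce the unweighted layer energy to $\int_0^1\tfrac{\epsilon+4F_0(s)}{2\sqrt{\epsilon+2F_0(s)}}\,ds\to\int_0^1\sqrt{2F_0(s)}\,ds=\phi(1)=\tfrac{\sqrt2}{12}$, so the weighted contribution at each endpoint is at most $\tfrac{\sqrt2}{12}e^{a_i}+o(1)$ (resp.\ $\tfrac{\sqrt2}{12}e^{b_i}+o(1)$). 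Summing yields the first term of $J^*_{c_0}$. The other two terms need no construction: $w_\epsilon\to\chi_E$ in $L^2_e$ together with the uniform $L^\infty$ bound gives $w_\epsilon\to\chi_E$ in $L^3_e$, hence $\alpha\int e^xG(w_\epsilon)\,dx\to\alpha G(1)\sum_i(e^{b_i}-e^{a_i})=-\tfrac{\sqrt2\,\alpha}{12}\sum_i(e^{b_i}-e^{a_i})$, while the continuity of the nonlocal term recorded before Lemma~\ref{lem_liminf} gives $\tfrac\sigma2\int e^xw_\epsilon\mathcal L_{c(\epsilon)}w_\epsilon\,dx\to\tfrac\sigma2\int e^x\chi_E\mathcal L_{c_0}\chi_E\,dx$. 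Finally $\|w_\epsilon\|_{L^2_e}^2\to1$ but need not equal $1$; a translation by $h_\epsilon=\log\|w_\epsilon\|_{L^2_e}^2\to0$ restores $w_\epsilon\in Y$ and, by Lemma~\ref{lem_shift}, only rescales $J_{c(\epsilon)}$ by $e^{-h_\epsilon}\to1$, leaving the $\limsup$ unchanged.

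For step (b), given the general $E$, let $E_N=\cup_{i=1}^N[a_i,b_i]$ with $m_N=\int e^x\chi_{E_N}\,dx\to1$, and translate it rightward by $-\log m_N$ to obtain a finite union $\widetilde E_N$ with $\int e^x\chi_{\widetilde E_N}\,dx=1$. By the scaling identity for $J^*_{c_0}$ under translation (each endpoint weight $e^{a_i}$ becomes $e^{a_i}/m_N$, and the nonlocal term likewise rescales by $1/m_N$), one checks $J^*_{c_0}(\chi_{\widetilde E_N})\to J^*_{c_0}(\chi_E)$ and $\chi_{\widetilde E_N}\to\chi_E$ in $L^2_e$ as $N\to\infty$, using again that $\sum_i(e^{a_i}+e^{b_i})<\infty$. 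Step (a) furnishes, for each $N$, a sequence $\{w^N_\epsilon\}\subset Y$ with $w^N_\epsilon\to\chi_{\widetilde E_N}$ in $L^2_e$ and $\limsup_{\epsilon}J_{c(\epsilon)}(w^N_\epsilon)\le J^*_{c_0}(\chi_{\widetilde E_N})$; a standard diagonal extraction then produces $w_\epsilon=w^{N(\epsilon)}_\epsilon$ with $w_\epsilon\to\chi_E$ and $\limsup_{\epsilon}J_{c(\epsilon)}(w_\epsilon)\le J^*_{c_0}(\chi_E)$. I expect the main obstacle to be the bookkeeping forced by the infinitely many, possibly clustering, interfaces: one must be sure the transition layers do not overlap and that the tail of $E$ is genuinely negligible in \emph{all three} energy terms simultaneously, including the nonlocal one. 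This is exactly where the convergence of $\sum_i(e^{a_i}+e^{b_i})$, the $L^2_e$-to-$L^3_e$ upgrade for bounded sequences, and the established $H^1_e$-continuity of $w\mapsto\mathcal L_{c_0}w$ carry the argument; the profile-pasting and the energy count on a single layer are otherwise routine.
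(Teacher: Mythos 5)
Your construction is essentially the paper's: the same pasting of the profiles $U_\epsilon$ and $\widetilde{U_\epsilon}$ across finitely many interfaces, the same layer-energy computation via the first integral of the profile equation giving $\phi(1)=\tfrac{\sqrt 2}{12}$ per interface, the same appeal to the stability theorem for the $G$-term and the nonlocal term, and the same truncation of the tail of $E$. The only differences are technical bookkeeping: the paper restores the constraint $\|w_\epsilon\|_{L^2_e}=1$ by sliding the transition layers (choosing the offsets $\theta_i$ by the intermediate value theorem) rather than by your a posteriori translation $h_\epsilon\to0$ combined with the scaling identity of Lemma~\ref{lem_shift}, and it folds the tail truncation into a single $\delta$-parametrized construction rather than your finite-union-plus-diagonalization in $N$; both of your devices are valid and, if anything, handle the exact mass constraint in the presence of the truncated tail more transparently.
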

\begin{proof}
Lemma~\ref{lem_converse} indicates that $\chi_E$ can jump at countably many points $x_1>x_2>x_3> \dots$
 with $x_n \to -\infty$. (If $E= \cup_j [a_j,b_j]$, then we set $x_1=b_1$, $x_2=a_1$ and $x_{2j-1}=b_j$, $x_{2j}=a_j$).
 For any $\delta>0$, choose a large ${i}_\delta$ such that $e^{-i_\delta} \leq \delta$ and thus
 $\int_{-\infty}^{-i_\delta} e^x \chi_E \, dx \leq \delta$.
 Without loss of generality, we may assume that $x_j \ne {-}i_\delta$ for all $j$. \\

  Let $x_j \in (-i_\delta,\infty)$ if $j=1,2, \dots, k_\delta$ and be outside of the interval $[-i_\delta,\infty)$ {otherwise}. 
 There is a $\Delta_{\delta}>0$ such that ${\min}_{1 \leq j \leq k_\delta} (x_{j}-x_{j+1}) \geq \Delta_\delta$.
 Let $\epsilon$ satisfy $\rho_\epsilon \leq \sqrt{\epsilon} < \Delta_\delta/2$. 
 It is clear that as $x$ increases,
 $\chi_E(x)$ jumps from $0$ to $1$ at $x_{2j}$  and 
 from $1$ to $0$ at $x_{2j-1}$.  Take $0 \leq \theta_i \leq 1$ for $i=1, 2, \dots, k_{\delta}$ and
define
 \[
w_\epsilon(x) = \left\{ \begin{array}{ll}
 U_\epsilon(x-{q}_{2j}),  &  x \in [q_{2j}, q_{2j}+ \rho_\epsilon], \; q_{2j}=x_{2j}-\theta_{2j} \rho_\epsilon,
  \; \mbox{when} \;2j \leq k_\delta, \\
 \widetilde{U_\epsilon}(x-q_{2j-1}), &  x \in [q_{2j-1}, q_{2j-1}+ \rho_\epsilon], \; q_{2j-1}=x_{2j-1}-\theta_{2j-1} \rho_\epsilon,
  \; \mbox{when} \; 2j-1 \leq k_\delta, \\
  0, & \mbox{if} \; x \leq -i_\delta -\rho_\epsilon, \\
\chi_E(x), & \mbox{otherwise}.
\end{array} \right.
\]
At a 
 point $x_{2j}$,
\begin{eqnarray*}
\mbox{if} \; \theta_{2j}=0, & \mbox{then} \;
\int_{x_{2j}-\rho_\epsilon}^{x_{2j}+\rho_\epsilon} e^x U_\epsilon^2 \, dx & \leq \int_{x_{2j}-\rho_\epsilon}^{x_{2j}+\rho_\epsilon} e^x \chi_E \, dx\,,\\
\mbox{if} \; \theta_{2j}=1, & \mbox{then} \;
\int_{x_{2j}-\rho_\epsilon}^{x_{2j}+\rho_\epsilon} e^x U_\epsilon^2 \, dx & \geq \int_{x_{2j}-\rho_\epsilon}^{x_{2j}+\rho_\epsilon} e^x \chi_E \, dx\,.
\end{eqnarray*}
Similar inequalities hold at $x_{2j-1}$.
By the immediate value theorem we can select each $\theta_i \in [0,1]$ to ensure that $\int_{-\infty}^{\infty} e^x w_\epsilon^2 \, dx=1$.
Observe that $w_\epsilon \in Y$ and $w_\epsilon(x) =0$ when $x \geq x_1+1$.

Since $\rho_\epsilon \to 0$ as $\epsilon \to 0$, it follows that $w_\epsilon \to \chi_E$ pointwise a.e. on $[-i_\delta-1, x_1+1]$.
The dominated convergence theorem then gives $w_\epsilon \to \chi_E$ in $L^2_e(-i_\delta, x_1+1)$. Now
$\int_{-\infty}^{-i_\delta} e^x |w_\epsilon - \chi_E|^2 \, dx \leq \int_{-\infty}^{-i_\delta} e^x dx \leq \delta$. Since $\delta$ is arbitrary,
we conclude that $w_\epsilon \to \chi_E$ in $L^2_e$.
Together with $0 \leq w_\epsilon \leq 1$, the stability theorem is applicable to the nonlocal term and the $G$-integral. 
Thus it suffices to check the limsup inequality for only the gradient term and the $F_0$-integral. 

\begin{eqnarray*}
 && \int_{-\infty}^\infty e^x ( \frac{\epsilon w'^{\,2}_\epsilon}{2} + \frac{F_0(w_\epsilon)}{\epsilon} ) \, dx \\
& = & \sum_{j=1}^{k_\delta} \int_{q_j}^{q_j +\rho_\epsilon} 
e^x ( \frac{\epsilon w'^{\,2}_\epsilon}{2} + \frac{F_0(w_\epsilon)}{\epsilon} ) \, dx \\
& =&  \sum_{j~\mbox{is even}}^{k_\delta} e^{q_j} \int_{0}^{ \rho_\epsilon}
e^x ( \frac{\epsilon U'^{\,2}_\epsilon}{2} + \frac{F_0(U_\epsilon)}{\epsilon} ) \, dx  
+ \sum_{j~\mbox{is odd}}^{k_\delta} e^{q_j} \int_{0}^{ \rho_\epsilon} 
e^x ( \frac{\epsilon \widetilde{U_\epsilon}'^{\,2}}{2} + \frac{F_0(\widetilde{U_\epsilon})}{\epsilon} ) \, dx \\
& \leq &  \sum_{j~\mbox{is even}}^{k_\delta} e^{q_j} e^{\rho_\epsilon} \int_{0}^{ \rho_\epsilon}
 ( \frac{\epsilon U'^{\,2}_\epsilon}{2} + \frac{F_0(U_\epsilon)}{\epsilon} ) \, dx  
+ \sum_{j~\mbox{is odd}}^{k_\delta} e^{q_j} e^{\rho_\epsilon} \int_{0}^{ \rho_\epsilon} 
 ( \frac{\epsilon \widetilde{U_\epsilon}'^{\,2}}{2} + \frac{F_0(\widetilde{U_\epsilon})}{\epsilon} ) \, dx \\
& =&  \sum_{j}^{k_\delta} e^{q_j} e^{\rho_\epsilon} \int_{0}^{ \rho_\epsilon}
( \frac{\epsilon U'^{\,2}_\epsilon}{2} + \frac{F_0(U_\epsilon)}{\epsilon} ) \, dx  
\quad \mbox{by using symmetry}, \\
& \leq &  \sum_{j}^{k_\delta} e^{q_j} e^{\rho_\epsilon} \int_{0}^{ \rho_\epsilon} 
 \;  \frac{\epsilon + 2 F_0(U_\epsilon)}{\epsilon}  \, dx   \quad \mbox{by using} \; (\ref{U1}), \\
 & = &  \sum_{j}^{k_\delta} e^{q_j} e^{\rho_\epsilon} \int_{0}^{ \rho_\epsilon}
 \sqrt{\epsilon + 2 F_0(U_\epsilon)} \;U'_\epsilon \, dx    \\
  & = &  \sum_{j}^{k_\delta} e^{q_j} e^{\rho_\epsilon} \int_{0}^{ 1}
 \sqrt{\epsilon + 2 F_0(s)} \, ds    \;.
\end{eqnarray*}
Taking limit as $\epsilon \to 0$ yields
\begin{eqnarray*} \label{sup1}
\lim \sup  \int_{-\infty}^\infty e^x (\epsilon \frac{w'^{\,2}_\epsilon}{2} + \frac{F_0(w_\epsilon)}{\epsilon} ) \, dx 
&\leq  &\phi(1) \sum_j^{k_\delta} e^{x_j}  \\
&\leq & {\frac{\sqrt{2}}{12}} \sum_j^\infty (e^{a_j}+e^{b_j}) \;,
\end{eqnarray*}
from which 
the proof 
{can be completed.}
\end{proof}

By virtue of Lemma~\ref{lem_liminf} and Lemma~\ref{lem_limsup}, 
the $\Gamma$-limit of $J_{c(\epsilon)}$ {in $L^2_e$} has been established;  
\begin{equation} \label{gamma_conv}
\mbox{$\Gamma$-$\lim$}\,J_{c(\epsilon)} = J^*_{c_0} \;.
\end{equation}
We claim that  $\liminf\limits_{\epsilon \to 0} ( \inf_{w \in Y} J_{c(\epsilon)}(w))< \infty$. Indeed taking
\[
w_\epsilon (x)= \left\{ \begin{array}{ll} 1, & \mbox{if} \; x\leq 0, \\
                                        1-\frac{x}{\epsilon}, & \mbox{if}  \;  0 \leq x \leq \epsilon, \\
                                         0, & \mbox{if} \; x \geq \epsilon,  
                                         \end{array} \right.
\]   
 we see that $w_\epsilon(\cdot-\theta_\epsilon) \in Y$ for some $\theta_\epsilon \to 0$. A direct calculation yields      
$\liminf\limits_{\epsilon \to 0}  J_{c(\epsilon)}(w_\epsilon)< \infty$, which verifies the claim. 

Since there exists  
 $C_4>0$ 
 such that $G(\xi) \geq -C_4 \xi^2$ for $\xi \in [-\widetilde{M_1}-1,\beta_2]$,
if $w \in Y$ then 
\begin{eqnarray*}
J_{c(\epsilon)}(w) & \geq  & \alpha \int_{-\infty}^\infty e^x G(w) \, dx \\
& \geq & - \alpha C_4 \int_{-\infty}^\infty e^x w^2 \, dx \\
& =& - \alpha C_4  \;.
\end{eqnarray*}
Hence \[
{\infty>}  \liminf\limits_{\epsilon \to 0} ( \inf_{w \in Y} J_{c(\epsilon)}(w)) \geq -\alpha C_4 \;
\]
%
and there is 
 a sequence
$\{\zeta_{\epsilon}\} \subset Y$ such that 
\begin{equation} \label{U}
\lim_{\epsilon \to 0} J_{c(\epsilon)}(\zeta_{\epsilon}) =  \liminf\limits_{\epsilon \to 0} ( \inf_{w \in Y} J_{c(\epsilon)}(w)) \;. 
\end{equation}
Such a minimizing sequence will be utilized to establish the existence of minimizers for $J^*_{c_0}$ in the next section. 







\section{
{Minimizer for $J_c^*$}} \label{min}
\setcounter{equation}{0}
\renewcommand{\theequation}{\thesection.\arabic{equation}}

In the investigation of the minimizers of $J^*_{c_0}$, {we assume} $c(\epsilon) \to c$ as $\epsilon \to 0$ throughout the section; 
{it allows us to use} a simpler notation $J^*_{c}$ instead of $J^*_{c_0}$. 
Also, if $\chi_E \in BV_e$ we will follow the formula stated in (\ref{Jinfty}) to evaluate $J^*_{c}(\chi_E)$
even if $\| \chi_E \|_{L^1_e} \ne 1$.

\begin{lemma} \label{lem_E1} 
If $c(\epsilon) \to c$ as $\epsilon \to 0$, then 
$J_c^*$ has a minimizer, denoted by ${\chi_{E_c}}$. Moreover \\ 
(i) $\int_{-\infty}^\infty e^x 
\chi_{E_c} \,dx=1$;\\
(ii) 
$E_{c}=[a,b]$ for some $a<b$; the scenario $a=-\infty$ is allowed.
\end{lemma}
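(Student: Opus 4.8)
The plan is to establish existence of a minimizer via the direct method, using the $\Gamma$-convergence machinery already developed, and then to prove the geometric conclusion (ii) by a rearrangement argument. First I would take the minimizing sequence $\{\zeta_\epsilon\} \subset Y$ from \eqref{U} satisfying $\lim_{\epsilon\to 0} J_{c(\epsilon)}(\zeta_\epsilon) = \liminf_{\epsilon\to 0}(\inf_{w\in Y} J_{c(\epsilon)}(w)) < \infty$. Since this limit is finite, Lemma~\ref{lem_compact} applies: along a subsequence $\zeta_\epsilon \to \chi_{E_c}$ in $L^2_e$ for some characteristic function $\chi_{E_c} \in BV_e \cap L^2_e$ with $\int_{-\infty}^\infty e^x \chi_{E_c}\,dx = 1$, which already gives part (i). The liminf inequality (Lemma~\ref{lem_liminf}) then yields
\[
J_c^*(\chi_{E_c}) \leq \liminf_{\epsilon\to 0} J_{c(\epsilon)}(\zeta_\epsilon) = \liminf_{\epsilon\to 0}\Big(\inf_{w\in Y} J_{c(\epsilon)}(w)\Big).
\]
To see this is in fact the minimum of $J_c^*$, I would invoke the limsup inequality (Lemma~\ref{lem_limsup}): for any competitor $\chi_F \in BV_e$ with $\int e^x \chi_F\,dx = 1$ there is a recovery sequence $\{w_\epsilon\}\subset Y$ with $\limsup J_{c(\epsilon)}(w_\epsilon) \leq J_c^*(\chi_F)$, whence $\inf_{w\in Y} J_{c(\epsilon)}(w) \leq J_{c(\epsilon)}(w_\epsilon)$ and taking $\liminf$ gives $\liminf_{\epsilon\to 0}(\inf_{w\in Y} J_{c(\epsilon)}(w)) \leq J_c^*(\chi_F)$. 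Combining the two displays shows $J_c^*(\chi_{E_c}) \leq J_c^*(\chi_F)$ for every admissible $\chi_F$, so $\chi_{E_c}$ is a minimizer. This is the standard fundamental theorem of $\Gamma$-convergence applied to our concrete functionals.

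The substantive part is (ii): showing the optimal set is a single interval. Here I would argue by contradiction, supposing $E_c = \cup_i [a_i,b_i]$ has at least two components with $b_1 > a_1 > b_2 > a_2 \geq \cdots$, and then produce a strictly better competitor, contradicting minimality. The natural operation is to slide the components together (or merge them) while preserving the mass constraint $\int e^x \chi_E\,dx = \sum_i (e^{b_i}-e^{a_i}) = 1$. The key is to understand how each of the three terms of $J_c^*$ responds. For the perimeter term $\frac{\sqrt 2}{12}\sum_i(e^{a_i}+e^{b_i})$ and the $G$-term $-\frac{\sqrt 2\,\alpha}{12}\sum_i(e^{b_i}-e^{a_i})$, the weighted-measure scaling of Lemma~\ref{lem_shift}(i)-(ii) governs the behavior under translation: translating a component to the right by $h$ multiplies its contribution to both $e^{a_i}+e^{b_i}$ and $e^{b_i}-e^{a_i}$ by $e^{h}$. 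The plan is to show that consolidating the mass into the rightmost admissible position strictly decreases the perimeter term relative to the fixed mass, since packing the jumps tightly reduces the total $\sum(e^{a_i}+e^{b_i})$ while holding $\sum(e^{b_i}-e^{a_i})=1$ fixed.

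The hard part will be controlling the nonlocal term $\frac{\sigma}{2}\int e^x \chi_E \mathcal{L}_c \chi_E\,dx$ simultaneously, since it couples the components in a non-obvious way and need not be monotone under all rearrangements. I expect the cleanest route is a two-step or variational-inequality argument: either show that merging two adjacent intervals into one of equal weighted mass decreases the nonlocal energy as well (exploiting a rearrangement/symmetrization inequality adapted to the weight $e^x$ and the Green's function of $-c^2\partial_{xx}-c^2\partial_x+\gamma$ on $L^2_e$), or first establish that a minimizer must be connected by a perturbation argument—translating one component infinitesimally and computing the first variation of all three terms, showing no configuration with a gap can be stationary. The weight $e^x$ and the drift term in $\mathcal{L}_c$ break the usual reflection symmetry, so standard Riesz rearrangement does not apply directly; adapting it to this asymmetric kernel, or bypassing it via a direct first-variation computation, is where the real work lies. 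The allowance $a=-\infty$ in the statement signals that the optimal interval may be a half-line $(-\infty,b]$, which is consistent with pushing mass leftward being cheap in perimeter (since $e^{a}\to 0$) while the constraint is still met, so the contradiction argument must permit the limiting half-line configuration as a legitimate minimizer rather than excluding it.
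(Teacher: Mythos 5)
Your existence argument and part (i) are sound and essentially the paper's route: the paper simply cites the fundamental theorem of $\Gamma$-convergence where you unpack it into the liminf/limsup inequalities applied to the minimizing sequence $\{\zeta_\epsilon\}$ from \eqref{U}, and (i) follows from the $L^2_e$-convergence in Lemma~\ref{lem_compact} exactly as you say.

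Part (ii), however, is where your proposal has a genuine gap: you correctly identify that the nonlocal term is the obstruction to a rearrangement argument, but you leave that obstruction unresolved, offering only two candidate strategies (an asymmetric Riesz-type rearrangement inequality, or a first-variation computation at a disconnected configuration) and conceding that ``the real work lies'' there. Neither is carried out, and neither is needed. The paper's argument exploits the exact exponential homogeneity of \emph{all three} terms of $J_c^*$ under translation (Lemma~\ref{lem_shift}(i)--(ii), which also applies to the nonlocal term since $\mathcal{L}_c$ commutes with translation): writing $E_c=E^1\cup E^2$ with $E^1=[a_1,b_1]$ the rightmost component, $\|\chi_{E^i}\|_{L^1_e}=e^{-h_i}$, one has $e^{-h_1}+e^{-h_2}=1$ and, by minimality applied to the translated competitors $\chi_{E^i}(\cdot-h_i)$, the bound $J_c^*(\chi_{E^i})\ge e^{-h_i}J_c^*(\chi_{E_c})$. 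Combined with the additive decomposition
\[
J_c^*(\chi_{E_c})=J_c^*(\chi_{E^1})+J_c^*(\chi_{E^2})+\sigma\int_{-\infty}^{\infty}e^x\chi_{E^1}\,\mathcal{L}_c\chi_{E^2}\,dx\;,
\]
this forces $0\ge\sigma\int e^x\chi_{E^1}\mathcal{L}_c\chi_{E^2}\,dx$, contradicting the strict positivity of $\mathcal{L}_c\chi_{E^2}$. Note the direction of the mechanism is opposite to what you anticipated: one never shows that merging decreases the nonlocal energy; rather, the positive cross-interaction is precisely the surplus that makes a disconnected set strictly worse than the weighted average of its renormalized pieces. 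Also, under the constraint $\sum_i(e^{b_i}-e^{a_i})=1$ the $G$-term is identically $-\frac{\sqrt{2}\,\alpha}{12}$, so your claim that consolidation must be analyzed term by term for the perimeter and $G$-terms separately is more delicate than necessary. As written, your proof of (ii) is an outline of difficulties rather than an argument, so the connectedness claim remains unproven in your proposal.
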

\begin{proof}
 Since $\liminf\limits_{\epsilon \to 0} ( \inf_{w \in Y} J_{c(\epsilon)}(w)) \leq C_0$ {for some $C_0>0$}, it follows from (\ref{U})
and Lemma~\ref{lem_compact} 
 that $\zeta_{\epsilon} \to \chi_{E_{c}}$
in $L^2_e$ with $\| D \chi_{E_{c}} \|_e \leq 6 \sqrt{2} (C_0+C_1 \alpha)$. By a fundamental theorem of $\Gamma$-convergence
\cite[Theorem 2.1]{Braides2},
$\chi_{E_{c}}$ is a minimizer of $J_c^*$ and 
\begin{equation} \label{key}
J_c^*(\chi_{E_{c}})=\lim_{\epsilon \to 0} J_{c(\epsilon)}(\zeta_{\epsilon}). 
\end{equation} 
Moreover $\| \chi_{E_{c}} \|_{L^1_e}= \| \chi_{E_{c}} \|_{L^2_e}= \lim_{\epsilon \to 0} \| \zeta_{\epsilon} \|_{L^2_e}=1$. {Thus} 
(i) is proved. \\

Suppose 
$E_{c}= \cup_{i=1}^\infty [a_i,b_i]$ with $b_1>a_1>b_2>a_2> \dots$. 
Let $E^1=[a_1,b_1]$ and $E^2=E_c \setminus E^1$ with both $E^1$ and $E^2$ being non-empty.  
For $i=1,2$, let $\| \chi_{E^i} \|_{L^1_e}=e^{-h_i}$ for some $h_i>0$. Thus 
\begin{equation} \label{sum}
1=\| \chi_{E_{c}} \|_{L^1_e}= e^{-h_1}+e^{-h_2} 
\end{equation}
and  $\| \chi_{E^i}(\cdot-h_i) \|_{L^1_e}=1$. 
Since $\chi_{E_{c}}$ is a minimizer of $J_c^*$, it is clear that $J_c^*(\chi_{E_{c}}) \leq 
J_c^*(\chi_{E^i}(\cdot-h_i))=  e^{h_i} J_c^*(\chi_{E^i})$.
A simple calculation gives
\begin{equation} \label{E12}
J_c^*(\chi_{E_{c}})=J_c^*(\chi_{E^1}) + J_c^*(\chi_{E^2}) + \sigma_0  \int_{-\infty}^{\infty} e^x \chi_{E^1} {\cal L}_{c} \chi_{E^2} \, dx \;.
\end{equation}
Then 
\[
J_c^*(\chi_{E_{c}})  \geq  e^{-h_1} J_c^*(\chi_{E_{c}})+
e^{-h_2} J_c^*(\chi_{E_{c}})
+ \sigma_0  \int_{-\infty}^{\infty} e^x \chi_{E^1} {\cal L}_{c} \chi_{E^2} \, dx \;,
\]
 together with (\ref{sum}) leads to 
\[
0 \geq \sigma_0  \int_{-\infty}^{\infty} e^x \chi_{E^1} {\cal L}_{c} \chi_{E^2} \, dx  \;.
\]
This is absurd, since ${\cal L}_c \chi_{E^2}\, (x) >0$ for all $x$, which implies $\int_{-\infty}^{\infty} e^x \chi_{E^1} {\cal L}_{c} \chi_{E^2}dx > 0$. 
{As a conclusion, $E_{c}=[a,b]$ or $(-\infty,0]$; in the latter case 
$\| \chi_{E_c} \|_{L^1_e}=1$.}
\end{proof}



{Note that $E_c$ can be a finite or semi-infinite interval. We need preliminary analysis in oder to distinguish such two cases in later sections.} 
Define $\ell=b-a$. With $\| \chi_{[a,b]} \|_{L^1_e}=e^{b}-e^{a}=1$, a {direct} calculation from (\ref{Jinfty}) gives
\begin{eqnarray} 
e^{-b} J^*_{c}(\chi_{[a,b]}) 
&=&J^*_{c}(\chi_{[-\ell,0]}) \nonumber \\
&=&\frac{\sqrt{2}}{12} (e^{-\ell}+1) - \frac{ \sqrt{2} \, \alpha}{12} (1-e^{-\ell}) +\frac{\sigma}{2}
 \int_{-\ell}^0 e^x {\cal L}_c \chi_{[-\ell,0]} \, dx  \,. 
 \label{JtE}
\end{eqnarray}
We next calculate the nonlocal term. 
The solutions of the characteristic 
equation $c^2 r^2+c^2 r-\gamma=0$ are
\begin{equation} \label{r21}
r = \frac{1}{2c}(-c \pm \sqrt{c^2+4 \gamma} ) \;,
\end{equation}
which will be denoted by $r_1$, $r_2$; here 
 $r_1<-1<0<r_2$ and note that $r_1+r_2=-1$.
The general solution of 
 $c^2 v''+c^2 v'- \gamma v=0$ is an element of 
$span \{e^{r_1 x}, e^{r_2 x}\}$. 
Solving \eqref{FNTW2} with $u=\chi_{[-\ell,0]}$ and $\ell$ being finite, we obtain 
\begin{equation} \label{vinfty}
{\cal L}_c \chi_{[-\ell,0]}=  \left\{ \begin{array}{ll}
A_4 e^{r_2 x}, & \mbox{for} \; x \leq -\ell, \\
\frac{1}{\gamma}+ A_3 e^{r_1x} + A_2 e^{r_2 x}, & \mbox{for} \; - \ell \leq x \leq 0, \\
A_1 e^{r_1 x}, & \mbox{for} \; x \geq 0,
\end{array}  \right.  
\end{equation}
where 
\[
A_1=\frac{r_2(1-e^{r_1 \ell})}{\gamma (r_2-r_1)}, \quad
A_2=\frac{r_1}{\gamma (r_2-r_1)}\;, \quad A_3= \frac{r_2 e^{r_1 \ell}}{\gamma (r_1-r_2)} \;, \quad
A_4=\frac{r_1(1-e^{r_2 \ell})}{\gamma (r_2-r_1)} \;.
\]
Evaluating the nonlocal term in (\ref{JtE}), we get 
\begin{eqnarray*}
\int_{-\ell}^0 e^x {\cal L}_c \chi_{[-\ell,0]} \, dx 
&=& \int_{-\ell}^0 e^x ( \frac{1}{\gamma} + A_3 e^{r_1 x} + A_2 e^{r_2 x}) \, dx \\
& =& \frac{2}{\gamma (r_2-r_1)} (r_2+r_1 e^{-\ell}+ e^{r_1 \ell}) \;.
\end{eqnarray*}
Substituting into (\ref{JtE}) gives 
\[
{ J_c^*(\chi_{[a,b]})= e^{b} {\cal J}(\ell, c) } \;
 \]
if we define 
\begin{eqnarray} \label{JtE2}
{{\cal J}(\ell, c)} & \equiv & \frac{\sqrt{2}}{12} (1-\alpha) + \frac{\sqrt{2}}{12} (1+\alpha)e^{-\ell} +
 \frac{\sigma}{\gamma (r_2-r_1)} (r_2+r_1 e^{-\ell}+ e^{r_1 \ell}) \;.
 \end{eqnarray}
 {As a reminder, the last term on the right hand side of \eqref{JtE2} is positive, since
  it is generated from $\int_{-\ell}^0 e^x {\cal L}_c \chi_{[-\ell,0]} \, dx$.} \\
 
 Recall that $E_{c}=[a,b]$ and $1=\| \chi_{[a,b]} \|_{L^1_e}=e^b (1-e^{-\ell})$. Hence 
 $b= - \log(1-e^{-\ell})$ and 
  \begin{equation} \label{JtE1b}
 J^*_{c}(\chi_{E_{c}})= \frac{1}{1-e^{-\ell}} \,
 {\cal J}(\ell, c).  \;
 \end{equation}
 It has 
 been shown that {$E_c$ is a minimizer of
 $J_c^*$.
 To 
 see} how 
  such a minimizer depends on $c$, we introduce an auxiliary function $H(c)= \frac{c}{\sqrt{c^2+4 \gamma}}$. Clearly $H$ is strictly increasing, 
  $H(0)=0$ and $H(c) \to 1$ as $c \to \infty$. 
  Straightforward calculation gives
\begin{eqnarray} 
\ds \frac{r_2}{r_2-r_1}= \frac{1}{2} (1-H(c)), && \frac{r_1}{r_2-r_1}= - \frac{1}{2} (1+H(c)), \label{H1} \\
\ds  \frac{d r_1}{dc} >0,  && \frac{d r_2}{dc} <0,  \label{H1bb} \\
\ds r_2-r_1=\frac{1}{H(c)}, &&  \frac{1}{(r_2-r_1)^2} \frac{d (r_2-r_1)}{dc}= -H'(c).
\label{H1aa}
\end{eqnarray}
For 
a given $c$, we intend to solve a 
 $\ell_c \in (0,\infty]$ such that
 \begin{equation} \label{lc}
  \frac{1}{1-e^{-\ell_c}} \,{\cal J}(\ell_c, c) \leq  \frac{1}{1-e^{-\ell}} \,
 {\cal J}(\ell, c)  \quad \mbox{holds for all} \; \ell>0.
 \end{equation}
 {When $c$ is the correct wave speed, the case $\ell_c=\infty$ corresponds to a traveling front for the limiting problem, while a finite $\ell_c$ indicates a
   traveling pulse.}

\section{Traveling front} \label{fr}
\setcounter{equation}{0}
\renewcommand{\theequation}{\thesection.\arabic{equation}}

In {this} section we examine the 
 case 
  $\ell_c=\infty$. 
  Define 
 \begin{eqnarray} \label{calF} 
 {\cal F}(c) \equiv  \lim_{\ell \to \infty} {{\cal J}(\ell,c)} 
  = \frac{\sqrt{2}}{12} (1-\alpha) + \frac{\sigma}{2 \gamma}(1-H(c)). \; 
\end{eqnarray}
It is clear that 
\begin{eqnarray} \label{lem_decr}
{\cal F}'(c)=- \frac{\sigma}{2 \gamma} H'(c)<0.
\end{eqnarray}
\begin{lemma} \label{lem_large_l}
For 
 $c>0$, {a necessary and sufficient condition} for $\ell_c=\infty$ is 
\begin{equation} \label{cond_infty}
{\frac{\sqrt{2} \, \gamma}{6 \sigma} {\geq} H(c) \;.}
\end{equation}
Under the hypothesis \eqref{cond_infty}, $\chi_{(-\infty,0]}$ is the unique minimizer of $J_c^*$.
\end{lemma}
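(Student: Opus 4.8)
The plan is to reduce the whole statement to a one-variable sign analysis of the profile energy
\[
g(\ell):=\frac{1}{1-e^{-\ell}}\,{\cal J}(\ell,c),
\]
which, by \eqref{JtE1b} together with the normalization $\|\chi_{[a,b]}\|_{L^1_e}=1$, is exactly the value $J_c^*(\chi_{E_c})$ carried by an interval of length $\ell$. Since $g(\infty)={\cal F}(c)$ by \eqref{calF}, the defining relation \eqref{lc} shows that $\ell_c=\infty$ holds precisely when $g(\ell)\ge{\cal F}(c)$ for every finite $\ell>0$. So the first step is to recast this as an inequality I can compute with.

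To that end I would set $t=e^{-\ell}\in(0,1)$ and study
\[
\Psi(t):={\cal J}(\ell,c)-{\cal F}(c)\,(1-e^{-\ell});
\]
since $1-e^{-\ell}>0$, the condition $g(\ell)\ge{\cal F}(c)$ is equivalent to $\Psi(t)\ge0$. Expanding ${\cal J}$ from \eqref{JtE2} with $e^{r_1\ell}=t^{-r_1}$, and using the two identities $r_1+r_2=-1$ and $r_2-r_1=1/H(c)$ from \eqref{H1aa}, one sees that the constant $\tfrac{\sqrt2}{12}(1-\alpha)$ and the term $K r_2$ (with $K:=\sigma H(c)/\gamma$) cancel against ${\cal F}(c)(1-t)$, while the linear coefficient collapses through $r_2-(-r_1)=r_1+r_2=-1$. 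Writing $p:=-r_1>1$, this yields the clean binomial form
\[
\Psi(t)=\Big(\tfrac{\sqrt2}{6}-K\Big)t+K\,t^{\,p},\qquad t\in[0,1].
\]

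The sign analysis is then immediate. For $t\in(0,1)$ I factor $\Psi(t)=t\big[(\tfrac{\sqrt2}{6}-K)+K\,t^{\,p-1}\big]$; since $p-1>0$ the bracket is strictly increasing in $t$ with infimum $\tfrac{\sqrt2}{6}-K$ as $t\to0^+$. Hence $\Psi\ge0$ on $(0,1)$ if and only if $\tfrac{\sqrt2}{6}-K\ge0$, that is $H(c)\le\tfrac{\sqrt2\,\gamma}{6\sigma}$, which is exactly \eqref{cond_infty}; this delivers both necessity and sufficiency for $\ell_c=\infty$. For the uniqueness claim I invoke Lemma~\ref{lem_E1}: every minimizer of $J_c^*$ is some $\chi_{[a,b]}$, the case $a=-\infty$ being $\chi_{(-\infty,0]}$ after normalization. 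Under \eqref{cond_infty}, even in the borderline equality one has $\Psi(t)=K\,t^{\,p}>0$ for all $t\in(0,1)$, so every finite interval strictly exceeds ${\cal F}(c)=J_c^*(\chi_{(-\infty,0]})$, forcing the front to be the unique minimizer.

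I expect the only real obstacle to be the algebraic bookkeeping that produces the binomial form for $\Psi$: one must verify that all the $\alpha$- and $r_2$-dependent constants disappear and, in particular, that $r_2+r_1=-1$ collapses the linear coefficient to $\tfrac{\sqrt2}{6}-K$. Once that identity is secured, the monotonicity of $t^{\,p-1}$ makes the equivalence with \eqref{cond_infty} and the strict positivity behind uniqueness completely routine.
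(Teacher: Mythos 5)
Your proposal is correct and follows essentially the same route as the paper: your $\Psi(t)=\bigl(\tfrac{\sqrt2}{6}-\tfrac{\sigma H(c)}{\gamma}\bigr)t+\tfrac{\sigma H(c)}{\gamma}t^{-r_1}$ is exactly $(1-e^{-\ell})$ times the paper's identity \eqref{difference}, and your $t\to0^+$ analysis of the sign is the paper's ``large $\ell$, $r_1<-1$'' argument for necessity, with the same strict positivity giving sufficiency and uniqueness.
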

\begin{proof}
A direct calculation gives
\begin{equation}
 \frac{1}{1-e^{-\ell}} {\cal J}(\ell, c) - {\cal F}(c) 
= \frac{1}{1-e^{-\ell}} \{ \frac{\sqrt{2}}{6} e^{-\ell} - \frac{\sigma H(c)}{\gamma} e^{-\ell}
+ \frac{\sigma}{\gamma (r_2-r_1)} e^{r_1 \ell} \} \;. \label{difference}
\end{equation}
{If $\ell_c=\infty$ then $ \frac{1}{1-e^{-\ell}} {\cal J}(\ell, c) - {\cal F}(c) \geq 0$ for 
 all $\ell >0$. In particular for large $\ell$, it follows from (\ref{difference}) and $r_1<-1$ 
 that (\ref{cond_infty}) holds.} \\

{
As to show that (\ref{cond_infty}) is a sufficient condition, it is clear from (\ref{difference}) that
 $\frac{1}{1-e^{-\ell}} {\cal J}(\ell, c) - {\cal F}(c) > 0$ for all $\ell$. Since 
 the last inequality is strict, 
 $\chi_{(-\infty,0]}$ is the unique minimizer of $J_c^*$.}
\end{proof}

  



 
{Besides $\ell_c=\infty$, we next investigate the additional constraints on the physical parameters such that ${\cal F}(c)=0$ is satisfied as well.} We now aim at the following assumption: \\

\noindent
{$(A 1)^*$} $\alpha \geq \frac{3 \sqrt{2} \,\sigma}{ \gamma} > \alpha-1 > 0$.
\begin{lemma} \label{lem_front}
 {$(A 1)^*$} is a 
necessary and sufficient condition 
 for both (i) and (ii) to be held: \\ 
(i) there is a unique {$c_f>0$}      
such that $
{\cal F}(c_f)=0$. \\ 
(ii) 
$\chi_{(-\infty,0]}$ is the unique global minimizer of $J^*_{c_f}$ and 
$J^*_{c_f}(\chi_{(-\infty,0]})=0$.\\
\end{lemma}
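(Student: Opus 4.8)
The plan is to reduce everything to the monotone behavior of $\mathcal{F}$ together with one explicit algebraic identity for $H(c_f)$. From \eqref{lem_decr}, $\mathcal{F}$ is continuous and strictly decreasing on $(0,\infty)$, and from \eqref{calF} together with $H(0)=0$ and $H(c)\to1$ its endpoint values are
\[
\mathcal{F}(0^+)=\frac{\sqrt{2}}{12}(1-\alpha)+\frac{\sigma}{2\gamma},\qquad \lim_{c\to\infty}\mathcal{F}(c)=\frac{\sqrt{2}}{12}(1-\alpha).
\]
Since $\mathcal{F}$ is strictly monotone, it possesses a (necessarily unique) zero $c_f\in(0,\infty)$ if and only if $\mathcal{F}(0^+)>0$ and $\lim_{c\to\infty}\mathcal{F}(c)<0$. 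A direct rearrangement shows $\mathcal{F}(0^+)>0 \Leftrightarrow \frac{3\sqrt{2}\sigma}{\gamma}>\alpha-1$ and $\lim_{c\to\infty}\mathcal{F}(c)<0 \Leftrightarrow \alpha-1>0$. Thus statement (i) is equivalent to the middle and right inequalities of $(A1)^*$, and this equivalence runs in both directions.

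For (ii) I would first record that $\chi_{(-\infty,0]}$ corresponds to the limit $\ell\to\infty$ in \eqref{JtE1b}: since $\frac{1}{1-e^{-\ell}}\to1$ and $\mathcal{J}(\ell,c)\to\mathcal{F}(c)$ by \eqref{calF}, we get $J^*_{c_f}(\chi_{(-\infty,0]})=\mathcal{F}(c_f)$. Hence, once $c_f$ is the zero supplied by (i), the energy identity $J^*_{c_f}(\chi_{(-\infty,0]})=0$ is automatic. It then remains only to decide when $\chi_{(-\infty,0]}$ is the \emph{unique} global minimizer of $J^*_{c_f}$; by Lemma~\ref{lem_large_l} this holds exactly when $\ell_{c_f}=\infty$, that is, when \eqref{cond_infty} is met at $c=c_f$, namely $\frac{\sqrt{2}\gamma}{6\sigma}\geq H(c_f)$.

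The crux is a single substitution. Solving $\mathcal{F}(c_f)=0$ from \eqref{calF} yields the explicit value $H(c_f)=1-\frac{\sqrt{2}\gamma}{6\sigma}(\alpha-1)$. Inserting this into \eqref{cond_infty} and clearing denominators, the inequality $\frac{\sqrt{2}\gamma}{6\sigma}\geq H(c_f)$ collapses to $\frac{\sqrt{2}\gamma}{6\sigma}\,\alpha\geq1$, i.e. $\alpha\geq\frac{3\sqrt{2}\sigma}{\gamma}$, which is the first inequality of $(A1)^*$. Because every implication above is an equivalence, assembling the three steps gives both directions at once: $(A1)^*$ forces (i) and the hypothesis \eqref{cond_infty} of Lemma~\ref{lem_large_l} at $c_f$, hence (ii); conversely, (i) delivers the middle and right inequalities while (ii) forces \eqref{cond_infty} at $c_f$ and hence the first inequality.

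The computations are elementary once these reductions are in place, so the only point demanding genuine care is the bookkeeping of strict versus non-strict inequalities. A zero of a strictly decreasing function lying strictly inside $(0,\infty)$ requires the two \emph{strict} inequalities in the middle and right of $(A1)^*$, whereas Lemma~\ref{lem_large_l} permits equality in \eqref{cond_infty}, which is exactly why $(A1)^*$ carries $\alpha\geq\frac{3\sqrt{2}\sigma}{\gamma}$ with a non-strict sign. Keeping these signs consistent across the two directions of the equivalence is the main thing to watch; the rest is routine algebra backed by the already-established monotonicity of $\mathcal{F}$ and $H$.
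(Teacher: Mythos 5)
Your proposal is correct and follows essentially the same route as the paper: both use the strict monotonicity of ${\cal F}$ with the endpoint values ${\cal F}(0^+)$ and ${\cal F}(\infty)$ to characterize (i), then substitute the explicit value $H(c_f)=1-\frac{\sqrt{2}\gamma(\alpha-1)}{6\sigma}$ into the criterion \eqref{cond_infty} of Lemma~\ref{lem_large_l} to characterize (ii), recovering $\alpha\geq\frac{3\sqrt{2}\sigma}{\gamma}$. Your explicit tracking of which inequalities are strict (the existence of an interior zero) versus non-strict (the front criterion) is a slightly cleaner presentation of the same argument, and your remark that $J^*_{c_f}(\chi_{(-\infty,0]})={\cal F}(c_f)$ makes automatic what the paper leaves implicit.
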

\begin{proof}
{First we prove the sufficiency.}
If $\alpha>1$ and 
 $\frac{3 \sqrt{2}\, \sigma}{\gamma} > (\alpha-1)$, then by direct calculation 
$
{\cal F}(0)=\frac{\sqrt{2}}{12} (1-\alpha)+ \frac{\sigma}{2 \gamma} > 0$
and $
{\cal F}(+\infty)=\frac{\sqrt{2}}{12} (1-\alpha)<0$. 
Since ${\cal F}
$ is strictly decreasing in $c$, 
 there is
 a unique $c_f$
such that $
{\cal F}(c_f)=0$. 
This together with (\ref{calF}) yields
\begin{equation} \label{cinfty}
H(c_f)=1 - \frac{2 \phi(1) (\alpha-1) \gamma}{\sigma}. 
\end{equation}
Recall that $h_*=1 - \frac{ (\alpha-1) \gamma}{3 \sqrt{2} \, \sigma}$. Solving (\ref{cinfty}) yields \eqref{cinfty1}. 
Moreover, to ensure $\ell_c=\infty$ at the same time, we need to check that (\ref{cond_infty}) holds with $c=c_f$. This is true if 
$\alpha {\geq} \frac{3 \sqrt{2}\, \sigma}{\gamma}$. \\

Conversely if (i) holds, this unique $c_f$ has to be determined by \eqref{cinfty1}. Since ${\cal F}(c_f)=0$ and ${\cal F}$ is strictly decreasing, 
it follows that ${\cal F}(0)>0$ and ${\cal F}(\infty)<0$. Together with \eqref{cond_infty} lead to {$(A 1)^*$}. 
\end{proof}
\begin{remark} \label{remark_c_f}
(a) As a consequence of \eqref{cinfty1}, $c_f$ is an increasing function of $h_*$. Thus $c_f$ is a decreasing function of $\alpha$ but an increasing function of $\sigma$. 
{In fact $c_f$ can get close to $0$ or tend to $\infty$ with appropriate values on the parameters.} \\ %
(b) Note that \eqref{cinfty1} gives a formula to calculate 
 the $\Gamma$-limit 
  speed for the traveling front solutions. 
\end{remark}

{In Theorem~\ref{thm_front}, a stronger condition ($A 1$) is imposed; that is, requiring {$(A 1)^*$} be satisfied with strict inequalities. It enables us to prove the existence of 
traveling wave solutions of \eqref{FNTW1}-\eqref{FNTW2} when $\epsilon$ is small, as stated in Theorem~\ref{thm_front}.} \\

%



\noindent
{\em Proof of Theorem~\ref{thm_front}.}
{Since $c_f$ satisfies \eqref{cond_infty} with a strict inequality}, 
there is an $\eta>0$ 
such that if $c \in [c_f-\eta,c_f+\eta]$ then (\ref{cond_infty}) {continue to} hold  and thus 
 $\inf_{L^2_e} J_{c}^*= 
{\cal F}(c)$.
Furthermore, ${\cal F}$ is a strictly decreasing function of $c$, hence  
${\cal F}(c_f-\eta)>0={\cal F}(c_f)>{\cal F}(c_f+\eta)$. \\

Let $c^+=c_f+\eta$ and $c^-=c_f-\eta$. For $w \in Y$, define 
\begin{eqnarray}
I_{\epsilon,c}(w)
&=&  \int_{-\infty}^\infty e^x \{ \epsilon \frac{w'^{\,2}}{2} + \frac{F_0(w)}{\epsilon} + {\alpha} G(w) 
+ \frac{\sigma}{2} w {\cal L}_{c} w \}\,dx . \label{IJ1}
\end{eqnarray}
{In other words, we take $I_{\epsilon,c}=J_{c(\epsilon)}$ with $c(\epsilon)=c$ for all $\epsilon$. Consequently}
\begin{equation} \label{gamma_conv_I}
\mbox{$\Gamma$-$\lim$}\,{I_{\epsilon,c}} = J^*_{c} \;.
\end{equation}
Hence there is an $\epsilon_0 > 0$ such that 
 $\inf_Y I_{\epsilon,c^-}> 0$ and $\inf_Y I_{\epsilon,c^+}<0$ if $\epsilon < \epsilon_0$. With only slight modification, the argument of 
  Lemma 3.5 of \cite{CC1} shows that $\inf_Y {I_{\epsilon,c}}$ is a continuous function of $c$. 
  By the {intermediate} value
theorem, 
 there exists
 a $c_\epsilon \in (c_f-\eta,c_f+\eta)$ such that $\inf_Y I_{\epsilon,c_\epsilon}= 0$.
 Furthermore by Lemma~\ref{lem_minimum} and Lemma~\ref{lem_soln}, $I_{\epsilon,c_\epsilon}$ has a minimizer $u_\epsilon \in Y$. 
 {Together with $I_{\epsilon, c_\epsilon}(u_\epsilon)=0$, }
we set $v_\epsilon={\cal L}_{c_\epsilon}u_\epsilon$ to obtain 
 a traveling wave solution $(u_\epsilon,v_\epsilon)$ of \eqref{FNTW1}-\eqref{FNTW2} with a speed $c_\epsilon$. With $c_\epsilon \in (c_f-\eta,c_f+\eta)$
 and Lemma~\ref{lem_compact}, as $\epsilon \to 0$ we see that along a subsequence  $c_\epsilon\ \to \hat{c_f}$ 
 and $u_\epsilon \to \chi_E$ in $L^2_e$ for some $\hat{c_f} \in {\mathbb R}$ and $\chi_E \in BV_e$. {As a consequence of $\Gamma$-convergence, $\chi_{E}$ is a minimizer of $J^*_{\hat{c_f}}$ and
\[
0=\lim_\epsilon  I_{\epsilon,c_\epsilon}(u_\epsilon)=J^*_{\hat{c_f}}(\chi_E) \;.
\]
By Lemma~\ref{lem_front}, $\hat{c_f}=c_f$ and $\chi_E=\chi_{(-\infty,0]}$. {Finally it follows from the uniqueness that  the limit of each subsequence is the same. Hence the convergence $c_\epsilon \to c_f$ and $u_\epsilon \to \chi_{(-\infty,0]}$ is along the whole sequence.}
\hfill $\Box$}


\vspace{.1in}
To give a physical interpretation for {the} assumption {($A 1$)}, we take a look of the nullclines.
Denoted by $\gamma_*=\gamma_*(\epsilon)$ such that
 the two regions enclosed by the  line $v=u/\gamma_*$ and 
the curve $v={f_\epsilon}(u)/{\sigma}$ 
are equal in area with opposing signs.

\begin{lemma} \label{lem_gstar}
If $\epsilon$ is sufficiently small then $\gamma_*= \frac{3 \sqrt{2} \, \sigma}{\alpha}(1+O(\epsilon))$.
\end{lemma}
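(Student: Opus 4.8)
The plan is to turn the equal-area (Maxwell) condition into a single algebraic equation for $\gamma_*$, solve it exactly, and then expand to first order in $\epsilon$. First I would write both nullclines explicitly. The $v$-nullcline is the line $v=u/\gamma_*$, while the $u$-nullcline of \eqref{main11} is $v=f_\epsilon(u)/(\epsilon\sigma)$ (the factor $\epsilon\sigma$ is the coefficient actually appearing in \eqref{main11}, and it is precisely what makes the claimed constant $\epsilon$-free). Their intersections are the zeros of $h(u)\equiv f_\epsilon(u)-\frac{\epsilon\sigma}{\gamma_*}u$. Since $f_\epsilon(u)=-u(u-\beta_\epsilon)(u-1)$, one zero is $u=0$ and $h(u)=-u\,q(u)$ with $q(u)=u^2-(1+\beta_\epsilon)u+\big(\beta_\epsilon+\frac{\epsilon\sigma}{\gamma_*}\big)$. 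For small $\epsilon$ the two roots $u_-<u_+$ of $q$ are real, distinct and positive, because their sum $1+\beta_\epsilon$ and product $\beta_\epsilon+\frac{\epsilon\sigma}{\gamma_*}$ are both positive. Thus the three intersection abscissae are $0<u_-<u_+$; one checks $h<0$ on $(0,u_-)$ and $h>0$ on $(u_-,u_+)$, so the two enclosed lobes have opposite sign, and the equal-area condition is exactly $\int_0^{u_+}h(u)\,du=0$.

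The crux is a clean algebraic identity: for any $u_0<u_1<u_2$,
\[
\int_{u_0}^{u_2}(u-u_0)(u-u_1)(u-u_2)\,du=-\tfrac{1}{12}\,(u_2-u_0)^3\,(u_0+u_2-2u_1),
\]
which vanishes precisely when the middle root is the midpoint of the outer two. I would prove this by translating $u_1$ to the origin and integrating the resulting cubic (the odd part integrates to zero over the asymmetric interval, leaving the displayed factor). Applying it with $u_0=0$, $u_1=u_-$, $u_2=u_+$, and using that $h(u)=-u(u-u_-)(u-u_+)$, shows the equal-area condition is equivalent to $u_+=2u_-$. Vieta's formulas for $q$ then give $3u_-=u_-+u_+=1+\beta_\epsilon$ and $2u_-^2=u_-u_+=\beta_\epsilon+\frac{\epsilon\sigma}{\gamma_*}$, and eliminating $u_-$ yields
\[
\frac{\epsilon\sigma}{\gamma_*}=\frac{2(1+\beta_\epsilon)^2}{9}-\beta_\epsilon=\frac{(2\beta_\epsilon-1)(\beta_\epsilon-2)}{9}.
\]

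Finally I would substitute $\beta_\epsilon=\frac12-\frac{\alpha\epsilon}{\sqrt2}$, so that $2\beta_\epsilon-1=-\sqrt2\,\alpha\epsilon$ and $\beta_\epsilon-2=-\frac32\big(1+O(\epsilon)\big)$; hence $\frac{\epsilon\sigma}{\gamma_*}=\frac{\sqrt2\,\alpha\epsilon}{6}\big(1+O(\epsilon)\big)$, and therefore $\gamma_*=\frac{6\sigma}{\sqrt2\,\alpha}\big(1+O(\epsilon)\big)=\frac{3\sqrt2\,\sigma}{\alpha}\big(1+O(\epsilon)\big)$, as claimed; uniqueness of $\gamma_*$ is automatic since the displayed equation determines the positive quantity $\frac{\epsilon\sigma}{\gamma_*}$ outright. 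I expect the main obstacle to be the bookkeeping that legitimizes the equal-area reduction: one must verify that for all sufficiently small $\epsilon$ the three intersections are genuinely $0<u_-<u_+$ (so there really are two lobes with the stated sign pattern), and one must be careful with the normalization of the $u$-nullcline — using $f_\epsilon/(\epsilon\sigma)$ rather than $f_\epsilon/\sigma$ — since that single factor of $\epsilon$ is exactly what removes $\epsilon$ from the denominator of the final constant. Everything after the root-midpoint identity is routine algebra and a first-order expansion.
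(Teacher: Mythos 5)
Your proposal is correct and follows essentially the same route as the paper: intersect the line $v=u/\gamma_*$ with the cubic nullcline $v=f_\epsilon(u)/(\epsilon\sigma)$, reduce the equal-area condition to the statement that the middle intersection $\mu_2^*$ is the midpoint of the outer two, use Vieta's formulas to get $\frac{\epsilon\sigma}{\gamma_*}=\frac{1}{9}(1-2\beta_\epsilon)(2-\beta_\epsilon)$, and expand in $\epsilon$. The only difference is cosmetic: you justify the midpoint condition by the explicit cubic integral identity, where the paper appeals to anti-symmetry of the cubic about $(\mu_2^*,\mu_2^*/\gamma_*)$ and leaves it as ``easily checked''; you also correctly identify the normalization $f_\epsilon/(\epsilon\sigma)$ that makes the final constant $\epsilon$-free.
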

\begin{proof}
The nullclines $v=u/\gamma_*$ and $v={f_\epsilon}(u)/{\sigma}$ 
intersect
at $u=0, \mu_2^*, \mu_3^*$, where $\mu_2^*, \mu_3^*$ are the roots of the quadratic equation
$u^2-(1+\beta_\epsilon)u + \beta_\epsilon+ \frac{\epsilon \sigma}{\gamma_*}=0$. As both nullclines are anti-symmetric about 
$(u,v)=(\mu_2^*,\mu_2^*/\gamma_*)$, it can be easily checked $\mu_2^*=\frac{1+\beta_\epsilon}{3}$ 
and $\mu_3^*=\frac{2(1+\beta_\epsilon)}{3}$.
Using the fact that $\mu_2^* \mu_3^*=\beta_\epsilon+ \frac{\epsilon \sigma}{\gamma_*}$, we obtain
\begin{equation} \label{gam}
\frac{\epsilon \sigma}{\gamma_*}= \frac{1}{9}(1-2\beta_\epsilon) (2-\beta_\epsilon) \;.
\end{equation}
The lemma follows by substituting 
 $\beta_\epsilon=\frac{1}{2} - \frac{\epsilon \alpha}{\sqrt{2}}$ into \eqref{gam}.
\end{proof}
For sufficiently small $\epsilon$, Lemma~\ref{lem_gstar} shows 
 that {($A 1$)} 
 is equivalent to
\begin{equation} \label{phy2}
\alpha>1 \quad \mbox{and} \quad \frac{\alpha}{\alpha-1} \gamma_* >\gamma >\gamma_* \;,
\end{equation}
which can be viewed as the constraints on $\gamma$ to generate a wave front. 


\section{Traveling pulse} \label{pul}
\setcounter{equation}{0}
\renewcommand{\theequation}{\thesection.\arabic{equation}}

In this section our attention turns to 
 traveling pulse. Let $c > 0$ and suppose that there is an 
 $\ell_c \in (0,\infty)$ such that
 \begin{equation} \label{lcp}
  \frac{1}{1-e^{-\ell_c}} \,{\cal J}(\ell_c, c) \leq  \frac{1}{1-e^{-\ell}} \,
 {\cal J}(\ell, c)  \quad \mbox{holds for all} \; \ell>0 \ .
 \end{equation}
 It follows that 
  \begin{equation} \label{Jp'=0}
  \frac{\partial}{\partial \ell}
  \left( \frac{{\cal J}(\ell,c)}{1-e^{-\ell}} \right) \Big|_{\ell=\ell_c} =0 \ .
  \end{equation}
For the existence of traveling pulses, in addition to \eqref{Jp'=0} we look for suitable ranges for the parameters under which a value of $c$ can be found to satisfy 
\begin{equation} \label{J=0}
{\cal J}(\ell_c,c)=0 \;.
\end{equation} 
 Then $\chi_{[a,b]}$ is a 
 minimizer of $J_c^*$ if we take $b= - \log (1-e^{-\ell_c})$ and $a=b-\ell_c$. A lower bound of $\ell_c$ is given in the next lemma. 

\begin{lemma} \label{lem_width}
If  $\chi_{[a,b]}$ is a 
 minimizer of $J_c^*$ {satisfying $J_c^*(\chi_{[a,b]})=0$} then {$\alpha>1$ and} 
 $\ell_c=b-a
  > \log  \frac{\alpha+1}{\alpha-1}$. 
 \end{lemma}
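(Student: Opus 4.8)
The plan is to exploit the explicit expression for $J_c^*(\chi_{[a,b]})$ obtained in Section~\ref{min} together with the strict positivity of the nonlocal contribution to ${\cal J}$. First I would record that, by \eqref{JtE1b} and the normalization $\|\chi_{[a,b]}\|_{L^1_e}=e^b-e^a=1$, the minimizer value takes the form
$$
J_c^*(\chi_{[a,b]})=\frac{1}{1-e^{-\ell_c}}\,{\cal J}(\ell_c,c),\qquad \ell_c=b-a.
$$
Since $1-e^{-\ell_c}>0$ for $\ell_c>0$, the hypothesis $J_c^*(\chi_{[a,b]})=0$ immediately forces ${\cal J}(\ell_c,c)=0$.

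Next I would extract the sign information carried by ${\cal J}$. Reading off \eqref{JtE2}, the third term
$\frac{\sigma}{\gamma(r_2-r_1)}\bigl(r_2+r_1e^{-\ell_c}+e^{r_1\ell_c}\bigr)$
is strictly positive, as noted in the remark following \eqref{JtE2}: it equals $\frac{\sigma}{2}\int_{-\ell_c}^{0}e^x{\cal L}_c\chi_{[-\ell_c,0]}\,dx>0$. Consequently, from ${\cal J}(\ell_c,c)=0$ the first two terms must sum to a strictly negative quantity,
$$
\frac{\sqrt{2}}{12}\bigl[(1-\alpha)+(1+\alpha)e^{-\ell_c}\bigr]<0,
$$
which is equivalent to $(1+\alpha)e^{-\ell_c}<\alpha-1$.

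Finally I would read off the two conclusions from this single inequality. Because $\alpha>0$ and $e^{-\ell_c}>0$, the left-hand side is positive, so the right-hand side must be positive as well, giving $\alpha>1$. Dividing by $\alpha+1>0$ then yields $e^{-\ell_c}<\frac{\alpha-1}{\alpha+1}$, and taking logarithms produces $\ell_c>\log\frac{\alpha+1}{\alpha-1}$, as claimed.

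The argument is short, and no step presents a genuine obstacle. The only point requiring care is the strictness of the positivity of the nonlocal term: it is what guarantees that the inequality for the first two terms is strict rather than merely $\le 0$, and hence that both $\alpha>1$ and the final lower bound on $\ell_c$ are strict. This strictness is precisely what is supplied by the earlier observation that ${\cal L}_c\chi_{[-\ell_c,0]}(x)>0$ for every $x$, so I would simply invoke that fact rather than re-derive it.
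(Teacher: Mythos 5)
Your proposal is correct and follows exactly the paper's argument: the paper's proof is the single sentence that the claim ``simply follows from ${\cal J}(\ell_c,c)=0$ and the last term in \eqref{JtE2} is positive,'' and your write-up just fills in the elementary algebra (passing from $J_c^*=0$ to ${\cal J}(\ell_c,c)=0$ via \eqref{JtE1b}, isolating $(1+\alpha)e^{-\ell_c}<\alpha-1$, and reading off $\alpha>1$ and the lower bound on $\ell_c$). Your remark about the strictness of the nonlocal term's positivity is exactly the point the paper relies on.
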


\begin{proof}
 It simply follows from  
 ${\cal J}(\ell_c, c)=0$ and the last term in \eqref{JtE2} is positive. 
\end{proof}

\begin{lemma} \label{lem_pulse1}
Let $c>0$ and assume that $J_{c}^*$ has a minimizer $\chi_{[a,b]}$ with $\| \chi_{[a,b]} \|_{L^1_e}=1$ and $J_{c}^*(\chi_{[a,b]})=0$. If $\ell_c=b-a$ then 
 $(\ell_c,c)$ satisfies 
\begin{eqnarray} 
\frac{\sigma}{2 \gamma} (1+H(c)) \; (1-e^{-r_2 {\ell_c}}) &= & \frac{\sqrt{2}}{12} (\alpha +1) \;  \label{H2b} 
\end{eqnarray}
and 
\begin{eqnarray} 
\frac{\sigma}{2 \gamma} (1-H(c)) \; (1-e^{r_1 \ell_c}) &= & \frac{\sqrt{2}}{12} (\alpha -1) \;. \label{H2a}
\end{eqnarray}
\end{lemma}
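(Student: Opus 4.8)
The plan is to convert the two optimality hypotheses into two scalar identities and then unwind them algebraically. By Lemma~\ref{lem_E1} a minimizer of $J_c^*$ under the mass constraint is a single interval, and by \eqref{JtE1b} its energy equals $\frac{1}{1-e^{-\ell}}{\cal J}(\ell,c)$ with $\ell=\ell_c=b-a$. Since we are dealing with a genuine pulse, $\ell_c\in(0,\infty)$ and hence $1-e^{-\ell_c}>0$; the hypothesis $J_c^*(\chi_{[a,b]})=0$ is therefore equivalent to ${\cal J}(\ell_c,c)=0$, i.e. \eqref{J=0}. The minimality of $\ell_c$ in \eqref{lcp} gives the first-order condition \eqref{Jp'=0}; applying the quotient rule to $\frac{{\cal J}(\ell,c)}{1-e^{-\ell}}$ and using ${\cal J}(\ell_c,c)=0$ annihilates the contribution of differentiating $1-e^{-\ell}$, so \eqref{Jp'=0} collapses to the single relation $\partial_\ell{\cal J}(\ell,c)\big|_{\ell=\ell_c}=0$. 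Thus everything reduces to the pair ${\cal J}(\ell_c,c)=0$ and $\partial_\ell{\cal J}(\ell_c,c)=0$.

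First I would differentiate \eqref{JtE2} to get
\[
\partial_\ell{\cal J}(\ell,c)=-\frac{\sqrt{2}}{12}(1+\alpha)e^{-\ell}+\frac{\sigma r_1}{\gamma(r_2-r_1)}\bigl(e^{r_1\ell}-e^{-\ell}\bigr).
\]
Substituting $\frac{r_1}{r_2-r_1}=-\frac12(1+H(c))$ from \eqref{H1}, equating the right-hand side to zero at $\ell=\ell_c$, dividing through by $e^{-\ell_c}$, and invoking $r_1+r_2=-1$ so that $(r_1+1)\ell_c=-r_2\ell_c$, produces exactly \eqref{H2b}.

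For \eqref{H2a} I would start from ${\cal J}(\ell_c,c)=0$, rewrite the nonlocal term in \eqref{JtE2} using the three identities $\frac{r_2}{r_2-r_1}=\frac12(1-H(c))$, $\frac{r_1}{r_2-r_1}=-\frac12(1+H(c))$ and $\frac{1}{r_2-r_1}=H(c)$ from \eqref{H1} and \eqref{H1aa}, and then eliminate the term $\frac{\sqrt{2}}{12}(1+\alpha)e^{-\ell_c}$ with the help of \eqref{H2b}. Writing $e^{r_1\ell_c}=e^{-\ell_c}e^{-r_2\ell_c}$ (again from $r_1=-1-r_2$), I expect the $(1+H(c))$-weighted contributions to combine and cancel, leaving the surviving terms to regroup as $\frac{\sqrt{2}}{12}(1-\alpha)+\frac{\sigma}{2\gamma}(1-H(c))(1-e^{r_1\ell_c})=0$, which is \eqref{H2a}.

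The main obstacle is bookkeeping rather than conceptual: the cancellation in the final step is delicate, and one must keep the mixed exponential $e^{r_1\ell_c}$ consistent across both equations, repeatedly using $r_1+r_2=-1$ to pass between $e^{r_1\ell_c}$, $e^{-r_2\ell_c}$ and $e^{-\ell_c}$. No new analytic ingredient is required beyond the single-interval structure of minimizers established in Lemma~\ref{lem_E1} and the explicit formulas \eqref{JtE2}, \eqref{H1}, \eqref{H1aa}.
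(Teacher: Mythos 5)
Your proposal is correct and follows essentially the same route as the paper: reduce the hypotheses to the pair ${\cal J}(\ell_c,c)=0$ and $\partial_\ell{\cal J}(\ell_c,c)=0$ (the quotient-rule collapse you describe is exactly how the paper passes from \eqref{Jp'=0} to \eqref{Jp=0}), read off \eqref{H2b} from the vanishing derivative via \eqref{H1} and $r_1+r_2=-1$, and then substitute back into ${\cal J}(\ell_c,c)=0$ to cancel the $(1+\alpha)e^{-\ell_c}$ term and obtain \eqref{H2a}. The algebraic bookkeeping you flag does work out as expected, matching the paper's intermediate identity $\frac{\sqrt{2}}{12}(1-\alpha)+\frac{\sigma}{\gamma(r_2-r_1)}(r_2+e^{r_1\ell})+\frac{\sigma r_1}{\gamma(r_2-r_1)}e^{r_1\ell}=0$.
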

\begin{proof}
By direct calculation 
\begin{eqnarray} 
\frac{\partial }{\partial \ell} {\cal J} (\ell, c) 
&= & - \frac{\sqrt{2}}{12} (1+\alpha)  e^{-\ell} - (1+H(c))
\frac{\sigma }{2\gamma } (-  e^{-\ell} + e^{r_1 \ell}) \;.\label{Della}
\end{eqnarray}
 Since both \eqref{Jp'=0} and \eqref{J=0} must hold, 
  we obtain 
  \begin{equation} \label{Jp=0}
  \frac{\partial {\cal J}}{\partial \ell} (\ell_c,c)=0 \;.
  \end{equation} 
This gives \eqref{H2b}. Using it together with 
 \eqref{J=0} yields
 \[
 \frac{\sqrt{2}}{12} (1-\alpha) + \frac{\sigma}{\gamma(r_2-r_1)} (r_2+e^{r_1 \ell}) 
 +\frac{\sigma r_1}{\gamma(r_2-r_1)} e^{r_1 \ell}=0 \ ,
 \]
from which 
 \eqref{H2a} follows. 
\end{proof}

\begin{lemma} \label{lem_pulse2}
Let 
$\sigma$ and $\gamma$ be given. If $J_{c}^*$ has a minimizer $\chi_{[a,b]}$ with $\| \chi_{[a,b]} \|_{L^1_e}=1$ and $J_{c}^*(\chi_{[a,b]})=0$, then 
\begin{equation}
\frac{3 \sqrt{2} \, \sigma}{\gamma} > \alpha >1 \ . \label{pulse_para}
\end{equation}
\end{lemma}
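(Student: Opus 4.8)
The plan is to derive the inequality \eqref{pulse_para} directly from the two identities \eqref{H2b} and \eqref{H2a} furnished by Lemma~\ref{lem_pulse1}, exploiting the sign and range information we already have about the auxiliary function $H(c)$ and the exponentials $e^{-r_2\ell_c}$, $e^{r_1\ell_c}$. Recall that for $c>0$ we have $0<H(c)<1$, that $r_1<-1<0<r_2$, and that $\ell_c>0$ is finite. Consequently $0<e^{r_1\ell_c}<1$ and $0<e^{-r_2\ell_c}<1$, so each of the two bracketed factors $1-e^{-r_2\ell_c}$ and $1-e^{r_1\ell_c}$ lies strictly in $(0,1)$.

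First I would read off $\alpha>1$ from \eqref{H2a}: the left-hand side is a product of strictly positive quantities, so the right-hand side $\frac{\sqrt2}{12}(\alpha-1)$ must be positive, giving $\alpha>1$. This already recovers the lower part of \eqref{pulse_para} and is consistent with Lemma~\ref{lem_width}. Next, to obtain the upper bound $\frac{3\sqrt2\,\sigma}{\gamma}>\alpha$, I would extract an upper estimate for $\alpha+1$ from \eqref{H2b}. Since $1+H(c)<2$ and $1-e^{-r_2\ell_c}<1$, the left-hand side of \eqref{H2b} is strictly less than $\frac{\sigma}{2\gamma}\cdot 2\cdot 1=\frac{\sigma}{\gamma}$. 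Therefore
\begin{equation}
\frac{\sqrt2}{12}(\alpha+1)=\frac{\sigma}{2\gamma}(1+H(c))(1-e^{-r_2\ell_c})<\frac{\sigma}{\gamma},
\end{equation}
which rearranges to $\alpha+1<\frac{12}{\sqrt2}\cdot\frac{\sigma}{\gamma}=\frac{6\sqrt2\,\sigma}{\gamma}$.

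This crude bound gives $\alpha<\frac{6\sqrt2\,\sigma}{\gamma}-1$, which is weaker than the claimed $\alpha<\frac{3\sqrt2\,\sigma}{\gamma}$, so the main obstacle is sharpening the factor. The sharpening should come from using \eqref{H2b} and \eqref{H2a} \emph{together} rather than bounding \eqref{H2b} in isolation. Adding the two identities and using $r_1+r_2=-1$ (hence $H(c)$ cancels appropriately in the combination $\frac{1+H}{1}+\frac{1-H}{1}$ style manipulations), I expect the two bracket factors to combine so that the crude factor of $2$ from $1+H(c)<2$ is no longer available; instead the correct comparison should pit the sum $\frac{\sqrt2}{12}(\alpha+1)+\frac{\sqrt2}{12}(\alpha-1)=\frac{\sqrt2}{6}\alpha$ against a quantity bounded by $\frac{\sigma}{\gamma}\bigl[(1+H)(1-e^{-r_2\ell_c})+(1-H)(1-e^{r_1\ell_c})\bigr]$. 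The plan is to show this bracket is strictly less than $2$, using that the exponential correction terms are strictly positive; this yields $\frac{\sqrt2}{6}\alpha<\frac{\sigma}{\gamma}$, i.e. $\alpha<\frac{6\sqrt2\,\sigma}{2\gamma}=\frac{3\sqrt2\,\sigma}{\gamma}$, as required.

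The delicate point I anticipate is verifying that the bracket $(1+H)(1-e^{-r_2\ell_c})+(1-H)(1-e^{r_1\ell_c})$ is bounded by $2$ with the correct strictness; this reduces to checking $(1+H)e^{-r_2\ell_c}+(1-H)e^{r_1\ell_c}>0$, which is immediate since both summands are positive, so the sum of the two left-hand sides is \emph{strictly} less than $(1+H)+(1-H)=2$. Thus I would present the argument in two clean steps: read off $\alpha>1$ from \eqref{H2a}, then add \eqref{H2b} and \eqref{H2a} and apply the strict positivity of the two exponential terms to conclude $\frac{\sqrt2}{6}\alpha<\frac{\sigma}{\gamma}$, which is exactly $\frac{3\sqrt2\,\sigma}{\gamma}>\alpha$. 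Combining the two steps gives \eqref{pulse_para}.
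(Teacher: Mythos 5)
Your proof is correct, and it reaches the conclusion by a genuinely more elementary route than the paper. Both arguments begin the same way, reading off $\alpha>1$ from the strict positivity of the left-hand side of \eqref{H2a}. For the upper bound, however, the paper first divides each of \eqref{H2b} and \eqref{H2a} by its bracket factor and adds, eliminating $H(c)$ to obtain $Q(\ell_c,c)=0$ for the function $Q$ of \eqref{H3}; it then computes $\partial Q/\partial\ell<0$ as in \eqref{dQdl} and concludes from the monotonicity that $0>\lim_{\ell\to\infty}Q(\ell,c)=\frac{\sqrt{2}\,\alpha\gamma}{6\sigma}-1$. You instead add the two identities as they stand and observe that
\begin{equation*}
(1+H)(1-e^{-r_2\ell_c})+(1-H)(1-e^{r_1\ell_c})=2-\bigl[(1+H)e^{-r_2\ell_c}+(1-H)e^{r_1\ell_c}\bigr]<2,
\end{equation*}
which gives $\frac{\sqrt{2}}{6}\alpha<\frac{\sigma}{\gamma}$ directly, with no derivative computation and no limit. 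Your version is shorter and cleaner as a proof of this lemma alone; the paper's detour through $Q$ is not wasted, though, because $Q$ and the sign of $\partial Q/\partial\ell$ are reused immediately afterwards to define $L(c)$ via $Q(L(c),c)=0$ and to run the existence argument for the pulse. One shared caveat: both proofs use that $\ell_c$ is finite to get the \emph{strict} inequality (if $\ell_c=\infty$ the exponential correction terms vanish and one only gets $\alpha=\frac{3\sqrt{2}\,\sigma}{\gamma}$), but this is implicit in the hypothesis that the minimizer is a bounded interval, so it is not a gap relative to the paper.
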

\begin{proof}
By Lemma~\ref{lem_pulse1}, $(\ell_c,c)$ satisfies \eqref{H2b} and \eqref{H2a}. 
Clearly the left hand side of \eqref{H2a} is positive,
which implies $\alpha>1$; {a fact which is already indicated in Lemma~\ref{lem_width}.}
Define 
\begin{equation} \label{H3}
Q(\ell, c) \equiv \frac{\sqrt{2} \, \gamma }{12 \sigma} \{ \frac{1+\alpha}{1-e^{-r_2 \ell}} + \frac{\alpha-1}{1-e^{r_1 \ell}} \} -1 \;.
\end{equation}
From \eqref{H2b} and \eqref{H2a}, eliminating
$H(c)$ yields $Q(\ell_c, c) = 0.$ \\ 

By direct calculation 
 \begin{equation} \label{dQdl}
 \frac{\partial Q}{\partial \ell} = \frac{\sqrt{2} \, \gamma}{12 \sigma} \{ - \frac{(1+\alpha)r_2}{(1-e^{-r_2 \ell})^2} \, e^{-r_2 \ell} \,
+ \frac{(\alpha-1) r_1}{(1-e^{r_1 \ell})^2} \, e^{r_1 \ell}  \}< 0 \; .
 \end{equation}
Since $\frac{\partial Q}{\partial \ell}<0$ for all $\ell$ and $Q(\ell_c, c) = 0$, 
 it follows that $0 > \lim_{\ell \to \infty}Q(\ell,c)=  \frac{\sqrt{2} \, \alpha \gamma }{ 6 \sigma} -1$, which 
 completes the proof. 
\end{proof}

{
\begin{remark} \label{remark_Q}
If we get a solution from 
 ${\cal J}(\ell,c)= \frac{\partial {\cal J}}{\partial \ell} (\ell,c)=0$, it satisfies \eqref{H2b}-\eqref{H2a}, and is also a solution of 
  $\frac{\partial {\cal J}}{\partial \ell} (\ell,c)=Q(\ell,c)=0$. 
As $Q$ is a linear combination of ${\cal J}$ and $\frac{\partial {\cal J}}{\partial \ell}$, we expect that
this 
 solution satisfies $Q(\ell, c)={\cal J}(\ell,c)=0$.
\end{remark}
}

 \begin{lemma} \label{lem_pulse3}
If ${\cal J} (\ell_c,c)=0$ and $\frac{\partial {\cal J}}{\partial \ell} (\ell_c,c)=0$, then 
$\frac{\partial^2 {\cal J}}{\partial \ell^2} (\ell_c,c)>0$.
\end{lemma}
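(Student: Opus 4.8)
The plan is to obtain $\dfrac{\partial^2 \mathcal{J}}{\partial \ell^2}$ in closed form by differentiating the first-order expression \eqref{Della} one further time, and then to exploit the stationarity hypothesis $\dfrac{\partial \mathcal{J}}{\partial \ell}(\ell_c,c)=0$ to collapse the result into a single manifestly signed term. The whole argument is a short direct computation; the only real content is tracking signs correctly after the substitution.

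Concretely, I would first introduce the shorthand $A=\frac{\sqrt{2}}{12}(1+\alpha)$ and $B=\frac{\sigma}{2\gamma}(1+H(c))$, both strictly positive, so that \eqref{Della} becomes $\frac{\partial \mathcal{J}}{\partial \ell}=(B-A)e^{-\ell}-Be^{r_1\ell}$. Differentiating once more gives
\[
\frac{\partial^2 \mathcal{J}}{\partial \ell^2}=-(B-A)e^{-\ell}-Br_1e^{r_1\ell}.
\]
The stationarity condition at $\ell_c$ reads $(B-A)e^{-\ell_c}=Be^{r_1\ell_c}$, i.e. $-(B-A)e^{-\ell_c}=-Be^{r_1\ell_c}$. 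Substituting this into the displayed second derivative makes the $e^{-\ell_c}$ contribution cancel against part of the $e^{r_1\ell_c}$ term, leaving the clean identity
\[
\frac{\partial^2 \mathcal{J}}{\partial \ell^2}(\ell_c,c)=-B(1+r_1)e^{r_1\ell_c}.
\]
Finally I read off the sign: $B>0$ and $e^{r_1\ell_c}>0$, while $r_1<-1$ (recorded just after \eqref{r21}) gives $1+r_1<0$, so the product is strictly positive, which is the assertion.

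The main obstacle, such as it is, lies entirely in the sign bookkeeping of the substitution step: one must recognize that the two exponentials of distinct decay rates recombine, via the critical-point identity, into the single factor $-(1+r_1)$, whose positivity is exactly what the placement $r_1<-1$ delivers. I note that the second hypothesis $\mathcal{J}(\ell_c,c)=0$ is never invoked; stationarity alone forces $\frac{\partial^2 \mathcal{J}}{\partial \ell^2}(\ell_c,c)>0$, so in fact \emph{every} stationary point of $\ell\mapsto\mathcal{J}(\ell,c)$ in $(0,\infty)$ is a strict local minimum. This observation also confirms the uniqueness of $\ell_c$ anticipated in Remark~\ref{remark_Q}, since a smooth function on an interval cannot have two strict local minima without an intervening critical point that is not a minimum.
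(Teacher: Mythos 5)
Your computation is correct and is essentially the paper's own proof: the paper differentiates \eqref{Della}, substitutes the stationarity identity in the form \eqref{H2b}, and arrives at \eqref{Della2b}, which equals your $-B(1+r_1)e^{r_1\ell_c}$ since $1+r_2=-r_1$, with positivity again coming from $r_1<-1$. Your side observation that $\mathcal{J}(\ell_c,c)=0$ is never used is also consistent with the paper, whose argument likewise relies only on \eqref{H2b}, itself equivalent to $\frac{\partial\mathcal{J}}{\partial\ell}(\ell_c,c)=0$.
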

\begin{proof}
Direction calculation gives 
\begin{equation} \label{Della2}
\frac{\partial^2 }{\partial \ell^2} {\cal J} (\ell, c)=   \frac{\sqrt{2}}{12}  e^{-\ell} [(1+\alpha)- (1+H(c))
\frac{3 \sqrt{2} \, \sigma }{\gamma} (1 + r_1e^{-r_2 \ell})] \;.
\end{equation}
Invoking (\ref{H2b}) 
 yields
\begin{equation} \label{Della2b}
\frac{\partial^2 {\cal J}}{\partial \ell^2} (\ell_c, c)=  - \frac{\sqrt{2}}{12}  e^{-\ell_c} \; [ 
\frac{3 \sqrt{2} \, \sigma }{\gamma } (1+H(c)) (1 + r_1)e^{-r_2 \ell_c}] 
 > 0\;.
\end{equation}
\end{proof}

{Lemma~\ref{lem_pulse2} shows that \eqref{pulse_para} is a necessary condition for the existence of the specified minimizer, we now intend to establish that
it is a sufficient condition as well.
 For given $\sigma$ and $\gamma$, if $\alpha \in (1,\frac{3 \sqrt{2} \, \sigma}{\gamma})$,
it follows from straightforward calculation that 
 \begin{equation} \label{dQdc}
\frac{\partial Q}{\partial c}= \frac{\sqrt{2} \, \gamma}{12 \sigma} \{ - \frac{(1+\alpha) \ell  }{(1-e^{-r_2 \ell})^2} \, e^{-r_2 \ell} \, \frac{d r_2}{dc}
+ \frac{(\alpha-1) \ell}{(1-e^{r_1 \ell})^2} \, e^{r_1 \ell} \, \frac{d r_1}{dc} \} >0 \; .
\end{equation}
For $c>0$, it is clear that $\frac{\partial Q}{\partial \ell}<0$ and 
 $\lim_{\ell \to 0^+}Q(\ell,c)=\infty$. Moreover by \eqref{pulse_para}, $\lim_{\ell \to \infty}Q(\ell,c)=  \frac{\sqrt{2}\, \alpha \gamma }{6 \sigma} -1 < 0$. 
Hence there is a unique 
 $L=L(c)$ satisfying $Q(L(c),c)=0$. Moreover 
 \begin{equation} \label{L'}
L'(c)= - \frac{ {\partial Q}/\partial c}{\partial Q/ \partial \ell}>0. \\
\end{equation}

Next we derive asymptotic properties of $L(c)$. As $c \to 0^+$,
it can be easily checked that $r_1 \sim -\frac{\sqrt{\gamma}}{c}(1+O(c))$ and $r_2 \sim \frac{\sqrt{\gamma}}{c}(1+O(c))$.
 Plugging 
  into 
  {$Q(L(c),c)=0$} yields
 \begin{equation} \label{asympL1}
\frac{\sqrt{2} \,\gamma}{12 \sigma}   \left( \frac{1+\alpha}{1-e^{-\sqrt{\gamma} L/c}} + \frac{\alpha-1}{1-e^{-\sqrt{\gamma} L/c}} \right)
= 1 +o(1)\;,
\end{equation}
which can be simplified as
 \begin{equation} \label{asympL2}
1- e^{-\sqrt{\gamma} L/c} = \frac{\sqrt{2}\, \gamma  \alpha}{6 \sigma} (1+o(1)).
\end{equation}
This shows {$\frac{L}{c} \to k_1 \equiv  - \frac{1}{\sqrt{\gamma}} \log ( 1- \frac{\sqrt{2}\, \gamma \alpha}{6 \sigma})$}
 as $c \to 0^+$. Then from \eqref{JtE2},
\[
\lim_{c \to 0^+} {\cal J}(L(c),c) = \frac{\sqrt{2}}{6} \;.
\]

Next if $c \to \infty$ then 
$r_2 \sim \frac{\gamma}{c^2}$ and $r_1 \sim -1$. {As $L(c)$ is an increasing function,}
 $\lim_{c \to \infty}L(c)$ exists;
  however it cannot be a finite number, 
from the fact of $Q(L(c),c)=0$ which then 
 gives
\begin{equation} \label{RL}
 \frac{1+\alpha}{1 - \lim_{c \to \infty} e^{- r_2 L}} + {\frac{\alpha -1}{1- \lim_{c \to \infty} e^{r_1L}}}  =\frac{6 \sqrt{2} \, \sigma}{\gamma}  \;.
\end{equation}
This indeed also 
eliminates the scenario that $\lim_{c \to \infty} r_2 L=0$. Furthermore, 
 $\limsup \limits_{c \to \infty} r_2 L =\infty$ would lead to 
$\frac{3 \sqrt{2} \, \sigma}{ \gamma} =\alpha$, which violates \eqref{pulse_para}. 
Thus the only possibility left is 
$\limsup \limits_{c \to \infty} r_2 L \equiv R_L$, a 
positive constant that can be solved from
\[
 \frac{1+\alpha}{1 - \lim_{c \to \infty} e^{- r_2 L}} + {\alpha -1} =\frac{6 \sqrt{2} \, \sigma}{\gamma}  \;.
\]
Consequently {along this subsequence},
\begin{equation} \label{asympL3}
{L \sim \frac{R_L c^2}{\gamma}}~\mbox{as}~c \to \infty
\end{equation}
 and then 
\[
\lim_{c \to \infty} {\cal J}(L(c),c)= \frac{\sqrt{2}}{12} (1-\alpha)  < 0 \ .
\]
By the intermediate value theorem there is a $c > 0$ such that 
${\cal J}(L(c),c)=0$. 
As indicated in Remark~\ref{remark_Q}, if 
 $(\ell,c)$ satisfies ${\cal J}(\ell,c)=Q(\ell,c)=0$, 
{then $\frac{\partial {\cal J}}{\partial \ell}(\ell,c)=0$ should be true as well. Indeed,} 

\begin{eqnarray} 
 \frac{\partial {\cal J}}{\partial \ell} (L(c),c)
&= &  - \frac{\sqrt{2}}{12} (1+\alpha)  e^{-L} 
+\frac{\sigma r_1}{\gamma (r_2-r_1)} (-  e^{-L} + e^{r_1 L}) \label{Dell1} \\
&= &  - \frac{\sqrt{2}}{12} (1+\alpha)  e^{-L} +\frac{\sigma r_1}{\gamma (r_2-r_1)} (-  e^{-L} + e^{r_1 L}) + {\cal J}(L(c),c) \;  \nonumber \\
&= &  \frac{\sqrt{2}}{12} (1-\alpha) + \frac{\sigma}{\gamma(r_2-r_1)} (r_2+e^{r_1 L}) 
 +\frac{\sigma r_1}{\gamma(r_2-r_1)} e^{r_1 L} \; \nonumber \\
&= &  \frac{\sigma}{2 \gamma} (1-H(c)) \; (1-e^{r_1 L}) - \frac{\sqrt{2}}{12} (\alpha -1) \;. \label{Della3} 
\end{eqnarray}
Also, 
we see from \eqref{Dell1} that 
\begin{eqnarray} 
e^{L} \frac{\partial {\cal J}}{\partial \ell} (L(c),c)
&= & \frac{\sigma}{2 \gamma} (1+H(c)) \; (1-e^{-r_2 L}) - \frac{\sqrt{2}}{12} (\alpha +1)  \;. \label{Della1} 
\end{eqnarray}
{Eliminating $H(c)$ from} \eqref{Della3}-\eqref{Della1} yields 
 \begin{equation} 
\frac{\partial {\cal J}}{\partial \ell} (L(c),c) = -\frac{\sigma}{\gamma}[(1-e^{r_1 L})^{-1} + e^{L}(1-e^{-r_2 L})^{-1}]^{-1} Q(L(c), c) = 0 \;. 
\end{equation}
By 
 \eqref{JtE1b}, a direct calculation gives
\[
\frac{\partial^2 J_c^*}{\partial \ell^2} \Big|_{(L(c),c)} = \frac{1}{1-e^{-\ell}} \frac{\partial^2 {\cal J}}{\partial \ell^2} \Big|_{(L(c),c)} \;.
\]
{As a consequence of Lemma~\ref{lem_pulse3}, this solution $(L(c),c)$ has to be a local minimizer of ${\cal J}$ and we use $c_p$ to designate the speed $c$ obtained from the above calculation. In the remainder of the proof, let $c=c_p$.
Then it follows from \eqref{H2b} and \eqref{Della} that 
\begin{equation} \label{sign1}
\frac{\partial {\cal J}}{\partial \ell}= e^{-\ell}\left(- \frac{\sqrt{2}}{12} (1+\alpha) + (1+H(c_p)) \frac{\sigma}{2 \gamma} (1 - e^{-r_2 \ell}) \right)
\left\{  \begin{array}{ll} >0, & \mbox{if} \; \ell > L(c_p), \\
                                  <0, & \mbox{if} \; \ell< L(c_p), \end{array} \right.
\end{equation}
from which we know 
$L(c_p)$ is a global minimizer of ${\cal J}(\cdot,c_p)$; thus denote $L(c_p)$ by $\ell_p$, the notion corresponding to $\ell_c$ when $c=c_p$.
 If we take $c=c_p$ and $b= - \log (1-e^{-\ell_p})$ then $J_{c}^*$ has a minimizer $\chi_{[b-\ell_p,b]}$ with 
 $J_{c}^*(\chi_{[b-\ell_p,b]})=0$ and $\| \chi_{[b-\ell_p,b]} \|_{L^1_e}=1$. 
 It follows from Lemma \ref{lem_width} that $\ell_p > \log  \frac{\alpha+1}{\alpha-1}$.}  \\

In summary, we have proved the following lemma under the assumption ($A 2$). 

\begin{lemma} \label{lem_pulse10}
Let 
$\sigma$ and $\gamma$ be given. If (A2) is satisfied,
 there exists a $c_p > 0$ such that, when $c=c_p$, $J_{c}^*$ has a 
 minimizer $\chi_{[a,b]}$ with $\| \chi_{[a,b]} \|_{L^1_e}=1$ and $J_{c}^*(\chi_{[a,b]})=0$. This $c_p$ is referred to as a $\Gamma$-limit 
  speed for the traveling pulse solutions and $b-a > \log  \frac{\alpha+1}{\alpha-1}$.
\end{lemma}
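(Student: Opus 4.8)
The plan is to combine Lemma~\ref{lem_E1}, which forces every minimizer of $J_c^*$ to be a single interval, with a scaling reduction that turns the problem into a two-variable calculus question in the width $\ell$ and the speed $c$. By the scaling identity $J_c^*(\chi_{[a,b]})=e^b\,\mathcal{J}(\ell,c)$ together with the unit-mass normalization $e^b(1-e^{-\ell})=1$, the energy of a unit-mass interval depends only on $\ell$ and is given by \eqref{JtE1b}. I would therefore seek a pair $(\ell_c,c)$ with $\ell_c$ finite at which this normalized energy is simultaneously stationary and zero; at such a point the two requirements \eqref{Jp'=0}--\eqref{J=0} are equivalent to the decoupled system $\mathcal{J}(\ell_c,c)=0$ and $\partial_\ell\mathcal{J}(\ell_c,c)=0$, since at a zero of $\tfrac{\mathcal{J}}{1-e^{-\ell}}$ its $\ell$-derivative reduces to a positive multiple of $\partial_\ell\mathcal{J}$.

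To solve this system under \eqref{pulse_para}, I would first eliminate $H(c)$: by Lemma~\ref{lem_pulse1} the two equations are equivalent to \eqref{H2b}--\eqref{H2a}, and eliminating $H(c)$ collapses them to the single scalar relation $Q(\ell,c)=0$ with $Q$ as in \eqref{H3}. For each fixed $c>0$, the monotonicity $\partial_\ell Q<0$ from \eqref{dQdl}, together with $Q(\ell,c)\to+\infty$ as $\ell\to0^+$ and $Q(\ell,c)\to\frac{\sqrt{2}\,\alpha\gamma}{6\sigma}-1<0$ as $\ell\to\infty$ (the negativity being exactly where \eqref{pulse_para} enters), yields a unique root $L(c)$; the sign $\partial_cQ>0$ from \eqref{dQdc} makes $L$ strictly increasing via \eqref{L'}. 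It then remains to solve the residual equation $\mathcal{J}(L(c),c)=0$ in $c$ alone, which I would do by the intermediate value theorem using the limits $\mathcal{J}(L(c),c)\to\frac{\sqrt{2}}{6}>0$ as $c\to0^+$ and $\mathcal{J}(L(c),c)\to\frac{\sqrt{2}}{12}(1-\alpha)<0$ as $c\to\infty$, established through the asymptotics \eqref{asympL2} and \eqref{asympL3}.

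Having produced $c_p$ with $\mathcal{J}(L(c_p),c_p)=0$ and $Q(L(c_p),c_p)=0$, I would confirm $\partial_\ell\mathcal{J}(L(c_p),c_p)=0$ by the identity of Remark~\ref{remark_Q}, which expresses $\partial_\ell\mathcal{J}$ along the curve $Q=0$ as a negative multiple of $Q$. Lemma~\ref{lem_pulse3} gives $\partial_\ell^2\mathcal{J}(\ell_p,c_p)>0$, and the global sign information \eqref{sign1} upgrades $\ell_p=L(c_p)$ from a local to a global minimizer of $\mathcal{J}(\cdot,c_p)$; since then $\mathcal{J}(\cdot,c_p)\ge0$ with equality only at $\ell_p$, the front competitor $\ell=\infty$ carries strictly positive energy $\mathcal{F}(c_p)>0$ and is ruled out. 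Setting $b=-\log(1-e^{-\ell_p})$ and $a=b-\ell_p$ then exhibits $\chi_{[a,b]}$ as the required zero-energy unit-mass minimizer, and Lemma~\ref{lem_width} supplies $b-a>\log\frac{\alpha+1}{\alpha-1}$. The main obstacle I anticipate is the $c\to\infty$ limit: because $L(c)$ is defined only implicitly through $Q=0$, extracting $\lim\mathcal{J}(L(c),c)$ requires a trichotomy argument showing that $r_2L(c)$ tends to a finite positive constant $R_L$ (excluding both $r_2L\to0$ and $r_2L\to\infty$, the latter forcing $\frac{3\sqrt{2}\,\sigma}{\gamma}=\alpha$ and contradicting \eqref{pulse_para}), after which $L\sim R_Lc^2/\gamma$ in \eqref{asympL3} pins down the sign.
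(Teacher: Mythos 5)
Your proposal is correct and follows essentially the same route as the paper: reduce to the system $\mathcal{J}=\partial_\ell\mathcal{J}=0$, eliminate $H(c)$ to obtain $Q(\ell,c)=0$ and the implicit branch $L(c)$, apply the intermediate value theorem to $\mathcal{J}(L(c),c)$ using the $c\to0^+$ and $c\to\infty$ asymptotics (including the trichotomy on $r_2L$), recover $\partial_\ell\mathcal{J}=0$ from the Remark~\ref{remark_Q} identity, and upgrade to a global minimizer via Lemma~\ref{lem_pulse3} and \eqref{sign1}, with Lemma~\ref{lem_width} giving the width bound. Your explicit exclusion of the front competitor via $\mathcal{F}(c_p)>0$ is only a slightly more spelled-out version of the paper's global-minimality step.
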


{We next study the uniqueness of $c_p$, as $\Gamma$-limit speed. This will in turn
implies the   
uniqueness of both $L(c_p)$  and  the minimizer of $J_c^*$. In the first step, we prove the following lemma.}

\begin{lemma} \label{lem_DcI}
Let $\ell,c>0$. 
Then  
$$
\frac{\partial{\cal J}}{\partial c}(\ell,c)<0\;.
$$
\end{lemma}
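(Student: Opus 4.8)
The plan is to compute $\partial\mathcal{J}/\partial c$ directly from \eqref{JtE2}, to note that only the nonlocal term carries $c$-dependence, and then to reduce the desired sign to a single convexity inequality in $\ell$. First I would rewrite the last term of \eqref{JtE2} by means of \eqref{H1} and \eqref{H1aa}: since $\frac{r_2}{r_2-r_1}=\tfrac12(1-H(c))$, $\frac{r_1}{r_2-r_1}=-\tfrac12(1+H(c))$ and $\frac{1}{r_2-r_1}=H(c)$, the functional takes the form
\begin{equation}
\mathcal{J}(\ell,c)=\frac{\sqrt2}{12}(1-\alpha)+\frac{\sqrt2}{12}(1+\alpha)e^{-\ell}+\frac{\sigma}{\gamma}\Big[\tfrac12(1-H)-\tfrac12(1+H)e^{-\ell}+He^{r_1\ell}\Big].
\end{equation}
Differentiating in $c$ (only $H$ and the exponent $r_1\ell$ depend on $c$) yields
\begin{equation}
\frac{\partial\mathcal{J}}{\partial c}=\frac{\sigma}{\gamma}\Big[-\tfrac12 H'(1+e^{-\ell})+H'e^{r_1\ell}+H\,r_1'\,\ell\,e^{r_1\ell}\Big].
\end{equation}

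Next I would simplify the coefficient $H\,r_1'$. From $r_1+r_2=-1$ one has $r_1'=-r_2'$, and differentiating $r_2-r_1=1/H(c)$ (from \eqref{H1aa}) gives $r_2'-r_1'=-H'/H^2$, hence $r_1'=H'/(2H^2)$ and $H\,r_1'=\frac{H'}{2H}=\frac{H'(r_2-r_1)}{2}$. Factoring out $\tfrac12 H'$, which is positive because $H'>0$, produces
\begin{equation}
\frac{\partial\mathcal{J}}{\partial c}=\frac{\sigma H'}{2\gamma}\Big[-(1+e^{-\ell})+2e^{r_1\ell}+(r_2-r_1)\ell\,e^{r_1\ell}\Big],
\end{equation}
so the sign of $\partial\mathcal{J}/\partial c$ equals the sign of the bracket $B(\ell)$. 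Multiplying $B(\ell)$ by $e^{-r_1\ell}>0$ and using $-r_1-1=r_2$ (again from $r_1+r_2=-1$) collapses it to $e^{-r_1\ell}B(\ell)=2+(r_2-r_1)\ell-e^{-r_1\ell}-e^{r_2\ell}$. Thus the claim $\partial\mathcal{J}/\partial c<0$ becomes equivalent to
\begin{equation}
e^{-r_1\ell}+e^{r_2\ell}>2+(r_2-r_1)\ell\qquad(\ell>0).
\end{equation}

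Finally I would close the argument by convexity. Setting $F(\ell)=e^{-r_1\ell}+e^{r_2\ell}-2-(r_2-r_1)\ell$, one checks $F(0)=0$, while $F'(\ell)=-r_1e^{-r_1\ell}+r_2e^{r_2\ell}-(r_2-r_1)$ gives $F'(0)=0$, and $F''(\ell)=r_1^2e^{-r_1\ell}+r_2^2e^{r_2\ell}>0$. Hence $F$ is strictly convex with a critical point at $\ell=0$, so $F(\ell)>0$ for every $\ell>0$, which is exactly the inequality needed. The main obstacle is the term $H\,r_1'\,\ell\,e^{r_1\ell}$, which is strictly positive and a priori competes with the negative contribution $-\tfrac12 H'(1+e^{-\ell})$; the bracket $B(\ell)$ is neither monotone nor convex, so a naive term-by-term bound does not settle the sign. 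The decisive maneuver is the multiplication by $e^{-r_1\ell}$ together with the identity $r_1+r_2=-1$, which converts the awkward mixed expression into the transparent convex comparison above.
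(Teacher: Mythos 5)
Your proposal is correct. The computation of $\frac{\partial{\cal J}}{\partial c}$ coincides with the paper's: both isolate the nonlocal term, use \eqref{H1} and \eqref{H1aa} to express everything through $H(c)$, and factor out the positive quantity $\frac{\sigma H'(c)}{2\gamma}$ so that the sign of $\frac{\partial{\cal J}}{\partial c}$ is that of the bracket $B(\ell)=-(1+e^{-\ell})+2e^{r_1\ell}+(r_2-r_1)\ell e^{r_1\ell}$, which is exactly the paper's ${\cal K}(\ell,c)$ in \eqref{calK} since $\frac{1}{H(c)}=r_2-r_1$. Where you genuinely diverge is in proving $B<0$. The paper runs a qualitative phase-portrait-style analysis: a Taylor expansion at $\ell=0$ using $r_1r_2=-\gamma/c^2$ to get the sign near the origin, then a study of $\frac{\partial{\cal K}}{\partial\ell}$ via the intersection of $y=e^{r_2\ell}$ with a straight line to locate a unique interior minimum $\ell_1$, and finally the limit ${\cal K}\to-1$ as $\ell\to\infty$. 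Your maneuver of multiplying by $e^{-r_1\ell}$ and invoking $r_1+r_2=-1$ converts the statement into $F(\ell)=e^{-r_1\ell}+e^{r_2\ell}-2-(r_2-r_1)\ell>0$, which follows in three lines from $F(0)=F'(0)=0$ and $F''>0$. This is shorter, avoids the case analysis around $\ell_1$, and replaces the paper's slightly informal ``the exponential eventually dominates the line, hence there is a unique second intersection'' step with an airtight strict convexity argument; the paper's route, on the other hand, yields as a by-product the shape of ${\cal K}$ (decreasing to a negative minimum, then increasing to $-1$), which your argument does not provide but which is not needed for the lemma.
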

\begin{proof}
If $\ell,c>0$, it follows from  \eqref{JtE2}, the first equation in \eqref{H1aa} and \eqref{r21} that 
\begin{align*}
\frac{\gamma}{\sigma}\frac{\partial{\cal J}}{\partial c}(\ell,c)
&=\frac{\partial}{\partial c}\Big[\frac{1}{r_2-r_1}\big(r_2+r_1e^{-\ell}+e^{r_1\ell}\big)\Big] 
\\
&=\frac{\partial}{\partial c}\Big[\frac{1}{r_2-r_1}\big(r_1(1+e^{-\ell})+e^{r_1\ell}\big)\Big] 
\\
&=\frac{\partial}{\partial c}\Big[H(c)\big(r_1(1+e^{-\ell})+e^{r_1\ell}\big)\Big] 
\\
&=H'(c)\big(r_1(1+e^{-\ell})+e^{r_1\ell}\big)+H(c)\frac{\partial r_1}{\partial c}(1+e^{-\ell}+\ell e^{r_1\ell})
\\
&=\frac{4\gamma}{(c^2+4\gamma)^{3/2}}\big(r_1(1+e^{-\ell})+e^{r_1\ell}\big)+\frac{2\gamma}{c(c^2+4\gamma)}(1+e^{-\ell}+\ell e^{r_1\ell})
\\
&=\frac{4\gamma}{(c^2+4\gamma)^{3/2}}\Big[r_1(1+e^{-\ell})+e^{r_1\ell}+\frac{1+e^{-\ell}+\ell e^{r_1\ell}}{2H(c)}\Big] \;.
\end{align*}
It is easy to verify that $2 r_1 H(c)=-1-H(c)$. Using such an identity we obtain
\begin{eqnarray*}
\frac{\gamma}{\sigma}\frac{\partial{\cal J}}{\partial c} &=&
\frac{2\gamma}{ H(c) (c^2+4\gamma)^{3/2}}\Big[-(1+H(c))(1+e^{-\ell})+2 H(c)e^{r_1\ell}+1+e^{-\ell}+\ell e^{r_1\ell}\Big] \\
&=&
\frac{2\gamma}{ (c^2+4\gamma)^{3/2}}  {\cal K} \;,
\end{eqnarray*}
where
\begin{equation} \label{calK}
{\cal K}(\ell,c) \equiv -1-e^{-\ell}+2 e^{r_1\ell}+\frac{\ell}{H(c)} e^{r_1\ell} \;.
\end{equation}
A direct calculation yields
\begin{eqnarray}
\frac{\partial {\cal K}}{\partial \ell} & =  &e^{-\ell}+ 2 r_1 e^{r_1 \ell} +\frac{1}{H(c)} ( e^{r_1 \ell}+ r_1 \ell e^{r_1 \ell} ) \nonumber  \\
&=& e^{-\ell} - e^{r_1 \ell}+ \frac{r_1 \ell}{H(c)} e^{r_1 \ell} \;. \label{K_ell}
\end{eqnarray}

It suffices to show ${\cal K}<0$ for all $\ell,c>0$. 
Note that ${\cal K}(0,c)=0$. Then for small $\ell$,
 a Taylor's expansion gives
\begin{eqnarray*}
{\cal K} &= & -1 - (1- \ell + \frac{\ell^2}{2}) 
+ 2 (1+ r_1 \ell + \frac{r_1^2 \ell^2}{2}) +\frac{\ell}{H(c)} (1+r_1 \ell) + O(\ell^3) \\
&=& \ell^2(-\frac{1}{2}+r_1^2 +\frac{r_1}{H(c)}) + O(\ell^3) \\
&=& \ell^2 ( -\frac{1}{2} + r_1 r_2) + O(\ell^3) \\
&=& \ell^2 ( -\frac{1}{2} -\frac{\gamma}{c^2}) + O(\ell^3) \\
& <& 0 \;.
\end{eqnarray*}
Therefore ${\cal K}=\frac{\partial {\cal K}}{\partial \ell}=0$ and $\frac{\partial^2 {\cal K}}{\partial \ell^2} <0$ at $\ell=0$, which implies
$\frac{\partial {\cal K}}{\partial \ell}<0$ for small positive $\ell$. Besides $\ell=0$, we claim that  there exists another unique non-negative root
of $\frac{\partial {\cal K}}{\partial \ell}$. Indeed such a root satisfies
\[
e^{r_2 \ell}= 1 - \frac{r_1}{H(c)} \ell \;,
\]
which correponds to an intersection point of
the exponential function 
$y=e^{r_2 \ell}$ and the straight line $y=1 - \frac{r_1}{H(c)} \ell$ in the $(\ell, y)$ plane. \\

As $\frac{\partial {\cal K}}{\partial \ell}<0$ for small positive $\ell$, the straight line lies above the graph of exponential function near $\ell=0$.
On the other hand
the exponential function 
 will dominate the straight line when $\ell$ is large, hence there exists another unique intersection point,
which is called $\ell_1$. Clearly $\frac{\partial {\cal K}}{\partial \ell}<0$ for $\ell< \ell_1$ and $\frac{\partial {\cal K}}{\partial \ell}>0$ for 
$\ell > \ell_1$. As $\ell \to \infty$, it is easily checked that ${\cal K} \to -1$. We can now conclude that ${\cal K}$ dips  below $0$ near 
$\ell=0$, reaching a negative minimum at $\ell=\ell_1$, then increases again to reach ${\cal K}=-1$ at $\ell=\infty$. Hence for any given $c>0$,
we know ${\cal K}<0$ for $\ell>0$.
This concludes the proof of the lemma.
\end{proof}

\begin{lemma} \label{lem_unique_c}
Under the assumption of Lemma~\ref{lem_pulse10}, the $\Gamma$-limit speed $c_p$ is unique.
\end{lemma}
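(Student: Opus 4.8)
The plan is to reduce the uniqueness of the $\Gamma$-limit speed to the statement that a single scalar function of $c$ has a unique zero. Recall from the analysis preceding Lemma~\ref{lem_pulse10} that for each $c>0$ the equation $Q(\ell,c)=0$ has a unique root $L=L(c)$, which is $C^1$ with $L'(c)>0$ (see \eqref{L'}, using $\partial_\ell Q<0$ from \eqref{dQdl} and the implicit function theorem), and that a value of $c$ is a $\Gamma$-limit speed precisely when $\chi_{[a,b]}$ with $b-a=L(c)$ is a zero-energy minimizer of $J_c^*$, i.e. when ${\cal J}(L(c),c)=0$. Accordingly I would set
\[
g(c)\equiv {\cal J}(L(c),c),
\]
a $C^1$ function on $(0,\infty)$ since ${\cal J}$ is smooth and $L$ is $C^1$. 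By \eqref{JtE1b} the $\Gamma$-limit speeds are exactly the zeros of $g$. Lemma~\ref{lem_pulse10} already furnishes at least one such zero, so it remains to prove that $g$ has at most one.

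The key computation is the derivative of $g$ at a zero. Differentiating,
\[
g'(c)=\frac{\partial {\cal J}}{\partial \ell}(L(c),c)\,L'(c)+\frac{\partial {\cal J}}{\partial c}(L(c),c).
\]
At any zero $c$ of $g$ one has simultaneously ${\cal J}(L(c),c)=0$ and $Q(L(c),c)=0$, the latter by the very definition of $L(c)$. As explained in Remark~\ref{remark_Q}, $Q$ is a linear combination of ${\cal J}$ and $\partial_\ell {\cal J}$, with the coefficient of $\partial_\ell {\cal J}$ nonzero; indeed the explicit identity derived just before Lemma~\ref{lem_pulse10} shows that whenever ${\cal J}(L(c),c)=Q(L(c),c)=0$ one also has $\partial_\ell {\cal J}(L(c),c)=0$. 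Hence the first term of $g'(c)$ drops out, and invoking Lemma~\ref{lem_DcI} gives
\[
g'(c)=\frac{\partial {\cal J}}{\partial c}(L(c),c)<0
\]
at every zero of $g$.

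Finally I would convert this transversality into uniqueness by a standard sign argument. Since $g'<0$ at each zero, the zeros are isolated and at each of them $g$ crosses strictly from positive to negative values. If $c_1<c_2$ were two consecutive zeros, then $g'(c_1)<0$ would force $g<0$ immediately to the right of $c_1$, hence on all of $(c_1,c_2)$; but $g'(c_2)<0$ would force $g>0$ immediately to the left of $c_2$, a contradiction. Therefore $g$ has at most one zero, and together with the existence provided by Lemma~\ref{lem_pulse10} exactly one. Since each zero of $g$ determines $L(c)$ and, via $b=-\log(1-e^{-L(c)})$, the corresponding minimizer of $J_c^*$, the uniqueness of the zero yields the uniqueness of the $\Gamma$-limit speed $c_p$, of $\ell_p=L(c_p)$, and of the minimizer. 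The main obstacle is the middle step, namely cleanly justifying that $\partial_\ell {\cal J}(L(c),c)=0$ at every zero of $g$; once Lemma~\ref{lem_DcI} is in hand, everything else is soft.
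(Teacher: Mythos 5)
Your proof is correct, but it is organized differently from the paper's. The paper argues by contradiction with a two-point comparison: if $c_1<c_2$ were two $\Gamma$-limit speeds with widths $\ell_1<\ell_2$ (the ordering forced by $L'>0$), it uses that each $\ell_i$ is the \emph{global} minimizer of ${\cal J}(\cdot,c_i)$ (established via \eqref{sign1}) together with Lemma~\ref{lem_DcI} to obtain the chain $0={\cal J}(\ell_1,c_1)>{\cal J}(\ell_1,c_2)>{\cal J}(\ell_2,c_2)=0$. You instead reduce to the scalar function $g(c)={\cal J}(L(c),c)$ and show it is transversal (with $g'<0$) at every zero, using only the \emph{criticality} $\partial_\ell{\cal J}(L(c),c)=0$ at zeros of $g$ — which is indeed the identity the paper derives just before Lemma~\ref{lem_pulse10}, valid exactly when ${\cal J}=Q=0$ simultaneously — plus the chain rule and Lemma~\ref{lem_DcI}. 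Both arguments hinge on the same key lemma, $\partial_c{\cal J}<0$. The paper's version is shorter and needs neither the differentiability of $L$ nor the vanishing of $\partial_\ell{\cal J}$ along the zero set, only global minimality in $\ell$; yours avoids invoking global minimality altogether and has the side benefit of exhibiting $c_p$ as a nondegenerate (transversal) zero of a single scalar equation, which gives some robustness information. Your sign-crossing argument between consecutive zeros is sound, since $g'<0$ at each zero makes the zeros isolated and fixes the sign of $g$ on either side.
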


\begin{proof}
We argue indirectly. From Lemma~\ref{lem_pulse10}, suppose that 
$\chi_{[a_i,b_i]}$ is a minimizer of $J_{c_i}^*$ for $i=1,2$ and 
$\ell_i=b_i-a_i$ with
$\ell_2>\ell_1$. By \eqref{L'}, it is necessary that  $c_2>c_1$. Then 
${\cal J}(\ell,c_i)>{\cal J}(\ell_i,c_i)=0$ for all $\ell \ne \ell_i$.
Together with $\frac{\partial {\cal J}}{\partial c}<0$, 
we obtain $0={\cal J}(\ell_1,c_1)>{\cal J}(\ell_1,c_2)>{\cal J}(\ell_2,c_2)=0$, which is absurd.
\end{proof}
\begin{remark} \label{remark_cp}
{As a consequence of Lemma~\ref{lem_pulse10} and Lemma~\ref{lem_unique_c}, when consider a positive speed for the $\Gamma$-limit, 
 $c_p$ is the unique speed and $J_{c_p}$ has a unique minimizer
$\chi_{[a,b]}$.} 
\end{remark}

{The proof of Theorem~\ref{thm_epsilon_pulse} is analogous to that of 
Theorem~\ref{thm_front}, we omit it. 
Finally recall $\gamma_*$ from 
Lemma~\ref{lem_gstar} and note that when $\epsilon$ is sufficiently small, 
 ($A 2$) is equivalent to}
\begin{equation} \label{phy_pulse}
\alpha>1 \quad \mbox{and} \quad  \gamma_* >\gamma. 
\end{equation}


\vspace{.3in}
\noindent
{\bf \large Acknowledgments}
Research is supported in part  by the Ministry of Science and Technology. Part of the work was done when 
Choi and Fusco were visiting the National Center for Theoretical Sciences, Taiwan.


\bibliographystyle{plain}

\end{document}